\documentclass[reqno]{amsart}

\usepackage[margin=30mm]{geometry}
\usepackage{graphicx}
\usepackage{amssymb}
\usepackage{mathrsfs}
\usepackage{xcolor}
\usepackage{enumitem}
\usepackage{etoolbox}
\usepackage{natbib}
\usepackage{mleftright}
\usepackage{mathtools}
\usepackage{hyperref}
\usepackage{lipsum}

\newtheorem{theorem}{Theorem}[section]

\newtheorem{lemma}[theorem]{Lemma}
\newtheorem{proposition}[theorem]{Proposition}

\newtheorem{remark}[theorem]{Remark}

\DeclareMathOperator*{\supp}{supp}

\numberwithin{equation}{section}
\hypersetup{colorlinks=false}
\allowdisplaybreaks[2]
\mleftright

\begin{document}
\title[Temporal decays for the Navier--Stokes--Coriolis equations]{Temporal decay estimates for global solutions of the Navier--Stokes equations with the Coriolis force} 
\author[T. Yoshizawa]{Tomoaki Yoshizawa}
\date{}
\address{Graduate School of Mathematical Sciences, The University of Tokyo, 3-8-1 Komaba, Meguro-ku, Tokyo, 153-8914, Japan.}
\email{tomoaki@ms.u-tokyo.ac.jp}
\keywords{Navier--Stokes equations, Coriolis force, smoothing effect, temporal decay estimates.}
\subjclass[2020]{76D05, 76U05, 35B40.
}
\maketitle
\begin{abstract}
We consider temporal decay estimates for global solutions of the Navier--Stokes equations with the Coriolis force. We show that under several conditions including the smallness of the initial data, the solution decays as fast as the corresponding linearized solutions, and its decay rate is higher than expected from the flow of the heat equation. The estimates are derived for all $L^p$-norms with $p\in[2, \infty].$
\end{abstract} 
\tableofcontents


\section{Introduction} \label{sect:1}
In this article, we consider the Cauchy problem of the Navier--Stokes equations with the Coriolis force in
	$\mathbb R^3.$
The system is given by the equations
	\begin{align}
		\begin{dcases*}
			\partial_tu+(u\cdot \nabla)u +\nabla p-\Delta u+\Omega e_3\times u=0 & in $(0, \infty)\times\mathbb R^3$, \\
			\operatorname{div} u=0 &in $(0, \infty)\times\mathbb R^3$, \\
			u(0, \cdot)=u_0 & in $\mathbb R^3,$
		\end{dcases*}
		\label{eq:1}
	\end{align}
where the unknowns
	$u=u(t, x)\colon (0, \infty)\times\mathbb R^3\to\mathbb R^3$
and
	$p=p(t, x)\colon (0, \infty)\times\mathbb R^3\to\mathbb R$
are the velocity field and the pressure of the fluid, respectively.
The vector
	$e_3$
denotes the unit vector along the 
	$z$-axis
	$e_3\coloneqq (0, 0, 1)\in\mathbb R^3$, 
and
	$\Omega\in\mathbb R\setminus\{0\}$
is the Coriolis parameter, which describes the speed of rotation of the fluid.
Here we assume that the initial data
	$u_0$
satisfies the divergence-free condition
	$\operatorname{div}u_0=0.$
	
The aim of this article is to prove smoothing effect and temporal decay estimates for the unique global solution of the system~\eqref{eq:1} obtained by \cite{IT13} and \cite{KLT14}. More specifically, we show that under some suitable conditions on the initial data
	$u_0\in L^1(\mathbb R^3)^3\cap \dot H^s(\mathbb R^3)^3$
for
	$\frac12<s<\frac9{10}, $
the global solution
	$u(t)$
is smooth away from
	$t=0, $
and decays faster than solutions of the heat equation thanks to the dispersive effect induced by the term
	$\Omega e_3\times u$.
In particular, we obtain the faster decays in all
	$L^p$-type
norms with
	$p\in[2, \infty].$

Let us first discuss the system we consider in more detail. 
We first define the Helmholtz projection operator
	$\mathbb P$
and a skew-symmetric matrix
	$J$
by
	\begin{align*}
		\mathbb P
		\coloneqq(\delta_{jk}+R_jR_k)_{1\leq j, k\leq 3}
		\quad\text{and}\quad
		J
		\coloneqq\begin{pmatrix}
		0 & -1 & 0\\
		1 & 0 & 0\\
		0 & 0 & 0
		\end{pmatrix},
	\end{align*}
where the operator
	$R_j$
is the Riesz transform defined by the formula
	$R_j
	\coloneqq -\partial_j(-\Delta)^{-\frac12}.$
We also use
	$P(\xi)$
to denote the Fourier multiplier matrix of
	$\mathbb P$
defined by
	\begin{align*}
		P(\xi)
		\coloneqq \left(\delta_{jk}-\frac{\xi_j\xi_k}{|\xi|^2}\right)_{1\leq j, k\leq 3}.
	\end{align*}
Then applying the Helmholtz projection
	$\mathbb P$
to the first equation of \eqref{eq:1}, we may remove the pressure term
	$\nabla p$ 
to obtain
	\begin{align}
		\begin{dcases*}
			\partial_tu+\mathbb P(u\cdot \nabla)u -\Delta u+\Omega \mathbb P J \mathbb P u=0 & in $(0, \infty)\times\mathbb R^3$, \\
			u(0, \cdot)=u_0 & in $\mathbb R^3.$
		\end{dcases*}
		\label{eq:3}
	\end{align}
Regarding the global well-posedness of the system~\eqref{eq:3}, a substantial amount of work has been done.
In \cite{BMN95, BMN97, BMN99, BMN01}, Babin, Mahalov, and Nicolaenko consider the system~\eqref{eq:3} under periodic boundary conditions, and they establish the existence and regularity of the global solutions when the absolute value of the Coriolis parameter
	$|\Omega|$
is sufficiently large. Chemin, Desjardins, Gallagher, and Grenier~\cite{CDGG02, CDGG06} study the problem in the whole space
	$\mathbb R^3, $
and show that for any initial velocity
	$u_0\in L^2(\mathbb R^3)^3+ H^\frac12(\mathbb R^3)^3$
there is a constant
	$C>0$
such that if
	$|\Omega|\geq C, $
the system \eqref{eq:3} admits the unique global solution.
Hieber and Shibata~\cite{HS10} prove the global well-posedness of the system for small initial data in
	$H^\frac12(\mathbb R^3), $
where the smallness condition is given uniformly with respect to
	$\Omega\in\mathbb R.$
We also refer to \cite{GIMS08, KY11, IT14} for the global well-posedness of \eqref{eq:3} in various scaling critical function spaces (see Remark~\ref{rem:1.6} for the scaling invariance).

Our argument in this article is based on the global well-posedness result of the works of \cite{IT13} and \cite{KLT14}, and therefore let us describe their setup more precisely.
Using the linear propagator 
	$e^{t(\Delta-\Omega\mathbb PJ\mathbb P)}, $
the system~\eqref{eq:3} can be rewritten to the integral equation formally equivalent to the original problem, given by
	\begin{align}
		u(t)
		=e^{t(\Delta-\Omega\mathbb P J\mathbb P)}u_0
		-\int_0^t e^{(t-\tau)(\Delta-\Omega\mathbb P J\mathbb P)}\mathbb P\operatorname{div}(u(\tau)\otimes u(\tau))\, d\tau.
		\label{eq:4}
	\end{align}
Furthermore, as is known from~\cite{GIMM06, HS10}, 
for divergence-free initial data
	$u_0, $
the operator
	$e^{t(\Delta-\Omega\mathbb P J\mathbb P)}$
can be explicitly written by the formula
	\begin{align*}
		e^{t(\Delta-\Omega\mathbb P J\mathbb P)}u_0
		=T_\Omega(t)u_0
		&\coloneqq\mathcal F^{-1}\left(e^{-t|\xi|^2}\left(\cos\left(\Omega\frac{\xi_3}{|\xi|}t\right)I+\sin\left(\Omega\frac{\xi_3}{|\xi|}t\right)R(\xi)\right)\widehat{u_0}(\xi)\right)\\
		&=\frac12\left(
		e^{t\Delta}\mathscr G_+(\Omega t)(I+\mathcal R)u_0
		+e^{t\Delta}\mathscr G_-(\Omega t)(I-\mathcal R)u_0
		\right), 
	\end{align*}
where 
	$R(\xi)$
is the matrix defined as
	\begin{align*}
		R(\xi)
		\coloneqq \frac1{|\xi|}
			\begin{pmatrix}
				0 & \xi_3 & -\xi_2\\
				-\xi_3 & 0 & \xi_1\\
				\xi_2 & -\xi_1 & 0
			\end{pmatrix}, 
	\end{align*}
and we also define the operators
	$\mathscr G_{\pm}(\tau)$
and
	$\mathcal R$
by
	\begin{align*}
		\mathscr G_{\pm}(\tau)f
		\coloneqq \mathcal F^{-1}\big(e^{\pm i\tau\frac{\xi_3}{|\xi|}}\widehat f\big)
		\quad\text{for $\tau\in\mathbb R$, and}\quad
		\mathcal Rf
		\coloneqq \mathcal F^{-1}\big(R(\xi)\widehat f\big). 
	\end{align*}
For the integral equation~\eqref{eq:4}, the global well-posedness result of Iwabuchi and Takada~\cite{IT13}, and Koh, Lee, and Takada~\cite{KLT14} reads as follows.
\begin{theorem}[\protect{\cite[Theorem~1.1]{IT13}, \citep[Theorem~1.1]{KLT14}}]\label{thm:1}
	Let
		$s, q, $
	and
		$\theta$
	satisfy
		\begin{align}
			&\frac12
			<s
			<\frac9{10}, \quad
			\frac13+\frac s9
			\leq \frac1q
			<\frac7{12}-\frac s6\label{eq:5}
			\\
			&\frac32\left(\frac12-\frac1q\right)
			\leq\frac1\theta
			\leq\frac52\left(\frac12-\frac1q\right), 
			\quad\text{and}\quad
			\frac1{2q}+\frac s2-\frac12
			<\frac1\theta
			<\frac58-\frac3{2q}+\frac s4.\label{eq:6}
		\end{align}
	Then, there is a constant
		$C_*>0$
	such that for any
		$u_0\in \dot H^s(\mathbb R^3)^3$
	with
		$\operatorname{div}u_0=0$
	and
		$\Omega\in\mathbb R\setminus\{0\}$
	satisfying
		\begin{align*}
			|\Omega|^{-\frac12(s-\frac12)}\left\|u_0\right\|_{\dot H^s}\leq C_*, 
		\end{align*}
	the equation~\eqref{eq:4} admits a unique global solution
		$u\in C([0, \infty); \dot H^s(\mathbb R^3)^3)
		\cap L^\theta([0, \infty); \dot W^{s, q}(\mathbb R^3)^3).$
	Moreover, there is a constant 
		$C>0$
	independent of
		$u_0, $
		$u, $
	and
		$\Omega$
	satisfying
		\begin{align*}
			\left\|u\right\|_{L^\theta([0, \infty); \dot W^{s, q})}
			\leq C|\Omega|^{-\frac1\theta+\frac32(\frac12-\frac1q)}\left\|u_0\right\|_{\dot H^s}.
		\end{align*}
\end{theorem}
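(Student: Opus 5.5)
The plan is a fixed-point argument for the integral equation \eqref{eq:4} in
\[
X\coloneqq L^\theta([0,\infty);\dot W^{s,q}(\mathbb R^3)^3),
\]
followed by a separate a posteriori step that gives continuity in $\dot H^s$. It rests on two linear ingredients for $T_\Omega(t)$.

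\emph{Linear ingredients.} The first is a dispersive estimate. Through the representation by $e^{t\Delta}\mathscr G_\pm(\Omega t)$ and a stationary phase analysis of the symbol $e^{\pm i\Omega t\xi_3/|\xi|}$ on dyadic annuli (the curvature of $\xi_3/|\xi|$ degenerates only near $\xi_3=0$ and the poles), I expect
\[
\|\mathscr G_\pm(\Omega t)\Delta_j f\|_{L^\infty}\lesssim 2^{3j}(1+|\Omega t|)^{-1}\|f\|_{L^1}.
\]
Interpolating with $L^2$ gives a decay factor $(1+|\Omega t|)^{-(1-2/q)}$. The second is the Strichartz-type space--time estimate obtained by a $TT^*$ argument in which the heat factor $e^{-t|\xi|^2}$ supplies the integrability that the weak dispersion alone lacks:
\[
\|T_\Omega(\cdot)u_0\|_{L^\theta(0,\infty;\dot W^{s,q})}\le C|\Omega|^{-\frac1\theta+\frac32(\frac12-\frac1q)}\|u_0\|_{\dot H^s}.
\]
The power of $|\Omega|$ is forced by rescaling $t\mapsto |\Omega|t$ together with the parabolic scaling. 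The lower bound $\frac32(\frac12-\frac1q)\le\frac1\theta$ makes the exponent of $|\Omega|$ nonpositive. The upper bound $\frac1\theta\le\frac52(\frac12-\frac1q)$ reflects the range where the heat smoothing compensates frequencies, since on each dyadic block one interpolates between the dispersive decay and the heat decay $e^{-ct2^{2j}}$.

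\emph{Duhamel term.} I would estimate the Duhamel term in $X$ with the inhomogeneous version of the same estimate combined with the Hardy--Littlewood--Sobolev inequality in time:
\[
\Big\|\int_0^t T_\Omega(t-\tau)\mathbb P\operatorname{div}(u\otimes u)\,d\tau\Big\|_{X}\le C|\Omega|^{\frac1\theta-\frac32(\frac12-\frac1q)}\cdot\big(\text{scaling factor}\big)\,\|u\|_X^2 .
\]
The product $u\otimes u$ is bounded in $\dot W^{s,q/2}$ by the fractional Leibniz rule, so $\|u\otimes u\|_{\dot W^{s,q/2}}\lesssim\|u\|_{L^{r}}\|u\|_{\dot W^{s,q}}$. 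The $L^{r}$ norm is then controlled by Sobolev embedding $\dot W^{s,q}\subset L^r$, valid because $s<3/q$ by \eqref{eq:5}. To reach $\dot W^{s,q}$ from $L^{q/2}$-type data, the derivative from $\operatorname{div}$ and the Sobolev gap are absorbed by the heat kernel bound $\|e^{t\Delta}\nabla f\|\lesssim t^{-\alpha}\|f\|$, together with the dispersive decay of $\mathscr G_\pm$. The time convolution $|t-\tau|^{-\alpha}$ maps $L^{\theta/2}$ to $L^\theta$ by Hardy--Littlewood--Sobolev precisely when the exponents match, and the strict conditions in \eqref{eq:5}--\eqref{eq:6} should be exactly those making $\alpha\in(0,1)$ and the $\xi$-integrability at low and high frequencies hold. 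Homogeneity then forces the powers of $|\Omega|$ to combine into $|\Omega|^{\frac1\theta-\frac32(\frac12-\frac1q)}$ times $\|u\|_X^2$.

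\emph{Contraction and conclusion.} With the bound
\[
\|u\|_X\le C_1|\Omega|^{-\beta}\|u_0\|_{\dot H^s}+C_2|\Omega|^{\beta}\|u\|_X^2,\qquad \beta=\frac1\theta-\frac32\Big(\frac12-\frac1q\Big),
\]
the contraction mapping principle applies on a ball of radius $2C_1|\Omega|^{-\beta}\|u_0\|_{\dot H^s}$ as long as $4C_1C_2\|u_0\|_{\dot H^s}<1$ in the appropriate scaled sense. Checking the powers should reproduce the smallness condition $|\Omega|^{-\frac12(s-\frac12)}\|u_0\|_{\dot H^s}\le C_*$. A posteriori, I would show $u\in C([0,\infty);\dot H^s)$ with an energy-type (dual Strichartz) estimate of the Duhamel term in $L^\infty_t\dot H^s$, using the same product bound. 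Uniqueness in $X$ follows from the difference estimate.

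The main obstacle is the linear Strichartz estimate with the sharp $\Omega$-dependence. The dispersion of $\xi_3/|\xi|$ is only of order $t^{-1}$ with degenerate curvature, so one must carefully localize in frequency and in angle and exploit the heat factor. That careful localization is what produces the peculiar admissible ranges in \eqref{eq:6}.
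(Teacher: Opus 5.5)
The paper does not prove this theorem; it quotes it from \cite{IT13} and \cite{KLT14}. Your architecture is the standard one: a contraction in $X=L^\theta(0,\infty;\dot W^{s,q})$, a heat-assisted Strichartz bound for $T_\Omega$, and a bilinear bound built from the $L^{q'}\to L^q$ dispersive estimate, heat smoothing, the Leibniz rule and Sobolev. That bilinear bound is essentially the one the paper uses in the proof of Lemma~\ref{lem:5}.

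\textbf{The power of $|\Omega|$ in the bilinear step is wrong.} This is the part that carries the content of the theorem. Homogeneity does not give $|\Omega|^{\beta}$ with $\beta=\frac1\theta-\frac32(\frac12-\frac1q)$. Under $u_\lambda=\lambda u(\lambda^2t,\lambda x)$, $\Omega_\lambda=\lambda^2\Omega$, the norm $\|u\|_X$ scales like $\lambda^{1+s-\frac3q-\frac2\theta}$. Since the Duhamel term scales like $u$, the quadratic bound must carry $|\Omega|^{\frac1\theta-\frac12+\frac3{2q}-\frac s2}=|\Omega|^{\beta-\frac12(s-\frac12)}$. Concretely, with $\frac1r=\frac2q-\frac s3$,
\[
\bigl\|T_\Omega(t-\tau)\mathbb P\operatorname{div}(u\otimes u)(\tau)\bigr\|_{\dot W^{s,q}}\lesssim(1+|\Omega|(t-\tau))^{-1+\frac2q}(t-\tau)^{-\frac12-\frac32(\frac1q-\frac s3)}\|u(\tau)\|_{\dot W^{s,q}}^2 .
\]
Young's inequality produces exactly the factor $|\Omega|^{\frac1\theta-\frac12+\frac3{2q}-\frac s2}$. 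So does HLS, after trading $(1+|\Omega|t)^{-1+\frac2q}\le(|\Omega|t)^{-\mu}$ with $\mu=\frac12-\frac32(\frac1q-\frac s3)-\frac1\theta\in[0,1-\frac2q]$. With your exponent the contraction condition $4C_1C_2\|u_0\|_{\dot H^s}<1$ is independent of $\Omega$. That is incompatible with the scaling in Remark~\ref{rem:1.6}, since $\|u_0\|_{\dot H^s}$ is not scale invariant for $s\neq\frac12$. With the correct exponent the condition becomes $4C_1C_2|\Omega|^{-\frac12(s-\frac12)}\|u_0\|_{\dot H^s}<1$, which is the hypothesis.

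\textbf{The product bookkeeping is off, and it hides a hypothesis.} A bound in $\dot W^{s,q/2}$ by $\|u\|_{L^r}\|u\|_{\dot W^{s,q}}$ forces $r=q$, and $\dot W^{s,q}$ does not control $L^q$. The correct space is $\dot W^{s,r}$ with $\frac1r=\frac2q-\frac s3$. The dispersive estimate acts from $L^{q'}$, and the heat kernel can only raise integrability, so you need $r\le q'$, i.e.\ $\frac1q\ge\frac13+\frac s9$. That is the lower bound in \eqref{eq:5}; it is not a condition on $\alpha\in(0,1)$. The other conditions play the following roles.
\begin{itemize}
\item $\frac1{2q}+\frac s2-\frac12<\frac1\theta<\frac12-\frac32(\frac1q-\frac s3)$ gives time integrability of the kernel above, for large and small $t-\tau$ respectively.
\item $\frac1\theta<\frac58-\frac3{2q}+\frac s4$ implies the previous upper bound, since $s>\frac12$. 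It gives $\dot H^s$-continuity of the Duhamel term via the pointwise bound $(t-\tau)^{-\frac12-\frac32(\frac2q-\frac s3-\frac12)}\|u\|_{\dot W^{s,q}}^2$ on bounded time intervals, as in Lemma~\ref{lem:6} and the lemma after it. So no uniform-in-time dual estimate is needed.
\item $\frac1q<\frac7{12}-\frac s6$ and $s<\frac9{10}$ only make these ranges nonempty.
\end{itemize}

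\textbf{The difficulty is misplaced.} The angular analysis of $\xi_3/|\xi|$ is needed only for the pointwise bound of Lemma~\ref{lem:2.1} (\cite[Lemma~2.2]{KLT14}); it does not produce the range in \eqref{eq:6}. Given that lemma, apply $TT^*$ on each dyadic block with kernel $2^{3k(1-\frac2q)}e^{-c(t+\tau)2^{2k}}(1+|\Omega||t-\tau|)^{-1+\frac2q}$, and Young in time using $\frac1\theta>\frac12-\frac1q$. This gives
\[
\|T_\Omega(\cdot)P_k\|_{L^2\to L^\theta L^q}\lesssim\min\Big\{2^{k(3(\frac12-\frac1q)-\frac2\theta)},\ |\Omega|^{-(\frac12-\frac1q)}2^{k(5(\frac12-\frac1q)-\frac2\theta)}\Big\}.
\]
This is bounded uniformly in $k$ by its value $|\Omega|^{-\frac1\theta+\frac32(\frac12-\frac1q)}$ at $2^{2k}\sim|\Omega|$ exactly when $\frac32(\frac12-\frac1q)\le\frac1\theta\le\frac52(\frac12-\frac1q)$. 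The lower bound is the high-frequency (heat) condition, and the upper bound is the low-frequency (dispersion) condition. A square-function argument with $q,\theta\ge2$ then sums the blocks.
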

The result is later extended to the case of the fractional Laplacian
	$(-\Delta)^\alpha$
for
	$\frac12<\alpha<\frac52$
by Ahn, Kim, and Lee~\cite{AKL22}. They also derive a Serrin-type regularity condition and temporal decay estimates of the solution, the latter being discussed later.

Our first theorem in this article is to give another proof for the smoothing effect of the nonlinear flow of the system~\eqref{eq:3}. Specifically, we show that the global solution obtained in Theorem~\ref{thm:1} is smooth away from
	$t=0$, 
gives a classical solution of the system~\eqref{eq:3}, and satisfies the energy equality. The proof is carried out by the direct estimate of the Duhamel representation of the solution, and the strategy is based on the author's recent result~\cite{Yos26} on the viscous
	$\beta$-plane
equations.
\begin{theorem}\label{thm:4}
 Let the exponents
 	$s, $
	$q, $
and
	$\theta$
satisfy the conditions~\eqref{eq:5} and \eqref{eq:6}, 
and assume that the initial data
 	$u_0\in H^s(\mathbb R^3)^3$
satisfies
	$|\Omega|^{-\frac12(s-\frac12)}\left\|u_0\right\|_{\dot H^s}\leq C_*.$
Then the solution
	$u$
obtained in Theorem~\ref{thm:1} satisfies
	\begin{align*}
		u\in C([0, \infty); H^s(\mathbb R^3)^3)\cap C^1((0, \infty); H^m(\mathbb R^3)^3)
	\end{align*}
and
	\begin{align*}
		\partial_tu +\mathbb P(u\cdot \nabla)u-\Delta u+\Omega \mathbb P J \mathbb Pu=0
		\quad\text{in $H^m(\mathbb R^3)^3$}
	\end{align*}
for all
	$t>0$
and
	$m\geq0.$
Furthermore, the energy equality
	\begin{align*}
		\left\|u(t)\right\|_{L^2}^2
		+2\int_0^t\left\|\nabla u(\tau)\right\|_{L^2}^2\, d\tau
		=\left\|u_0\right\|_{L^2}^2
	\end{align*}
holds for all
	$t>0.$
\end{theorem}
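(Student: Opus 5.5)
The plan is to work directly with the Duhamel formula \eqref{eq:4}, using two basic properties of $T_\Omega(t)$. First, on divergence-free fields it is a Fourier multiplier whose symbol has modulus at most $e^{-t|\xi|^2}$, since $\cos(\cdot)I+\sin(\cdot)R(\xi)$ acts isometrically on $\xi^\perp$. Second, it therefore satisfies the heat-type smoothing bound
\[
\|\nabla^k T_\Omega(t)f\|_{L^2}\le Ct^{-k/2}\|f\|_{L^2}
\]
uniformly in $\Omega$, together with the usual $L^r\to L^2$ version with the extra factor $t^{-\frac32(\frac1r-\frac12)}$.

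\textbf{Step 1: $L^2$ persistence.} I would first show $u\in C([0,\infty);L^2)$, which combined with Theorem~\ref{thm:1} gives $u\in C([0,\infty);H^s)$. The linear part is clear. For the Duhamel term, I bound $\|T_\Omega(t-\tau)\mathbb P\operatorname{div}(u\otimes u)\|_{L^2}$ by
\[
(t-\tau)^{-\frac12-\frac32(\frac1r-\frac12)}\|u\otimes u\|_{L^r}.
\]
I then control $\|u\otimes u\|_{L^r}$ by Hölder together with Sobolev embeddings $\dot W^{s,q}\subset L^{p}$ and interpolation with $\dot H^s$. The aim is to obtain
\[
\|u(t)\|_{L^2}\le \|u_0\|_{L^2}+C\int_0^t (t-\tau)^{-a}\|u(\tau)\|_{L^2}^{1-\lambda}\|u(\tau)\|_{\dot W^{s,q}}^{b}\cdots\,d\tau,
\]
which is linear in $\|u\|_{L^2}$: one factor is placed in $L^2\cap\dot H^s$ and the other in $L^{p}$ with $p=3q/(3-sq)$. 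A Gronwall argument with singular kernel on each finite interval, using $u\in L^\theta\dot W^{s,q}$, then gives a local-in-time bound.

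\textbf{Step 2: bootstrap in regularity.} For $t>t_0>0$, I write
\[
u(t)=T_\Omega(t-t_0)u(t_0)-\int_{t_0}^t\cdots .
\]
I then show inductively that $u\in L^\infty_{\rm loc}((0,\infty);H^{m})$ for all $m$. Assuming $u\in L^\infty([t_0,T];H^{m})$ with $m>s$, I use the product estimate
\[
\|u\otimes u\|_{H^{m}}\lesssim\|u\|_{L^\infty}\|u\|_{H^m},
\]
or, while $m\le 3/2$, fractional Leibniz estimates with $L^p$ norms, together with the smoothing gain of the kernel, $(t-\tau)^{-\frac12-\frac\delta2}$ for $\delta<1$. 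This raises the regularity by $\delta$. The linear term is smooth for $t>t_0$, so starting from $H^s$ and iterating reaches every $H^m$ on $[2t_0,T]$.

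\textbf{Step 3: time regularity and the energy equality.} Time continuity in $H^m$ follows from the same estimates and strong continuity of the semigroup. Differentiating the Duhamel formula on $(t_0,\infty)$ gives $u\in C^1((0,\infty);H^m)$ and shows that the equation holds in $H^m$. Testing the equation against $u$ on $[t_0,t]$, the nonlinear term $(\mathbb P(u\cdot\nabla)u,u)$ and the Coriolis term $(\mathbb PJ\mathbb Pu,u)$ vanish, the latter by skew-symmetry of $J$ and because $\mathbb Pu=u$. This yields the energy equality on $[t_0,t]$. Letting $t_0\to0$ uses $u\in C([0,\infty);L^2)$ from Step~1 and monotone convergence for $\int\|\nabla u\|^2$.

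\textbf{Main obstacle.} I expect the main obstacle to be Step~1, together with the first stage of Step~2. The data lie only in $H^s$ with $s<9/10$, and the only global control is the mixed norm $L^\theta\dot W^{s,q}$. The exponent bookkeeping has to make the kernel singularities integrable and the time exponents compatible with $\theta$ under the constraints \eqref{eq:5}--\eqref{eq:6}. If the direct estimate fails, I would first use the $\dot H^s$ bound to obtain $u\in L^2_{\rm loc}\dot H^{s+1}$ from energy-type arguments at the $\dot H^s$ level. That gives the needed integrability before starting the bootstrap.
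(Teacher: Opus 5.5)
\textbf{The gap: Step~1 is circular.} Everything downstream rests on Step~1, and as written it does not work. The Volterra inequality you aim for bounds $\|u(t)\|_{L^2}$ by an integral of $\|u(\tau)\|_{L^2}^{1-\lambda}$ times known factors. That inequality is true in $[0,\infty]$. But a Gronwall or Bihari argument only extracts a bound from it if you already know that $\tau\mapsto\|u(\tau)\|_{L^2}$ is finite, indeed locally bounded, on $[0,T]$; otherwise the inequality may simply be vacuous. Theorem~\ref{thm:1} gives you only $u\in C([0,\infty);\dot H^s)\cap L^\theta(0,\infty;\dot W^{s,q})$, which carries no low-frequency information. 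So finiteness of $\|u(\tau)\|_{L^2}$ for $\tau>0$ is exactly what you are trying to prove. Your fallback does not help either. $L^2_{\rm loc}\dot H^{s+1}$ bounds from $\dot H^s$ energy estimates control high frequencies, not the $L^2$ norm, and they also need regularity you do not yet have. Starting Step~2 in $H^s$, continuity at $t=0$, and the limit $t_0\to0$ in the energy equality all depend on this step.

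\textbf{The missing idea.} Estimate the Duhamel term in $L^2$ so that no $L^2$ norm of $u$ appears on the right-hand side at all, by letting the product carry the $s$ derivatives. Take $\frac1r=\frac2q-\frac s3$, which lies in $(\frac12,1)$. Proposition~\ref{prop:2.5} and $\dot W^{s,q}\hookrightarrow L^{3q/(3-sq)}$ give $\|u\otimes u\|_{\dot W^{s,r}}\lesssim\|u\|_{\dot W^{s,q}}^2$. The heat factor then absorbs the remaining $1-s$ derivatives and the passage $L^r\to L^2$:
\[
\bigl\|T_\Omega(t-\tau)\mathbb P\operatorname{div}(u(\tau)\otimes u(\tau))\bigr\|_{L^2}
\lesssim (t-\tau)^{-(\frac3q-s-\frac14)}\|u(\tau)\|_{\dot W^{s,q}}^2 .
\]
By \eqref{eq:6} you have $\frac3q-s-\frac14<1-\frac2\theta$. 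H\"older in time therefore gives $\|u(t)-T_\Omega(t)u_0\|_{L^2}\lesssim t^{\frac54+s-\frac3q-\frac2\theta}\|u\|_{L^\theta(0,\infty;\dot W^{s,q})}^2$ with a positive exponent. This yields finiteness and continuity at $t=0$ together, with no Gronwall. (Taking $\frac1r=1-\frac s3$ and $\sup_{[0,t]}\|u\|_{\dot H^s}$ instead also works, with kernel $(t-\tau)^{-(\frac54-s)}$.)

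\textbf{Comparison with the paper.} This direct estimate is the case $m=0$ of Lemma~\ref{lem:6} together with the paper's continuity lemma. For $m>0$ the paper proceeds in three steps. It first proves $u\in L^\theta([\varepsilon,\infty);\dot W^{s+m,q})$ by induction in small derivative steps (Lemma~\ref{lem:5}). It then estimates the Duhamel formula on $[0,t]$ directly. Finally it obtains $C^1$ regularity by difference quotients. Once you repair Step~1 this way, your restarted-Duhamel $L^\infty_tH^m$ bootstrap is a legitimate alternative to that route. For example, $\|u\otimes u\|_{\dot W^{m,r}}\lesssim\|u\|_{\dot H^m}\|u\|_{\dot H^s}$ with $\frac1r=1-\frac s3$ gains any $\delta<s-\frac12$ per step. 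Your Step~3 matches the paper's energy argument.
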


Let us next consider the temporal decay estimates for the solution of \eqref{eq:1}. In the case of
	$\Omega=0$, 
the system coincides with the original Navier--Stokes equations 
	\begin{align}
		\begin{dcases*}
			\partial_tu+(u\cdot \nabla)u +\nabla p-\Delta u=0 & in $(0, \infty)\times\mathbb R^n$, \\
			\operatorname{div} u=0 &in $(0, \infty)\times\mathbb R^n$, \\
			u(0, \cdot)=u_0 & in $\mathbb R^n,$
		\end{dcases*}
		\label{eq:NS}
	\end{align}
in
	$n=3$, 
and the temporal decays of the solutions have been extensively studied.
In~\cite{Kat84}, Kato shows that the unique strong solution of \eqref{eq:NS} satisfies the decay estimates
	\begin{align*}
		\left\|u(t)\right\|_{L^p}
		\leq Ct^{-\frac n2(\frac1n-\frac1p)}
		\quad\text{for
	$t>0, $
	$p\in[n, \infty], $}
	\end{align*}
if the initial data
	$u_0$
is sufficiently small in
	$L^n(\mathbb R^n)^n.$
Schonbek~\cite{Sch85} shows that for the initial data
	$u_0\in L^1(\mathbb R^3)\cap L^2(\mathbb R^3)^3, $
there exists a weak solution of the system satisfying
	$\left\|u(t)\right\|_{L^2}
	\leq C(1+t)^{-\frac14}.$
The power of the decay rate is later improved by Schonbek~\cite{Sch86} and Kajikiya and Miyakawa~\cite{KM86} to
	\begin{align*}
		\left\|u(t)\right\|_{L^2}
		\leq C(1+t)^{-\frac n4}
		\quad\text{for
			$t>0$}
	\end{align*}
if 
	$u_0\in L^1(\mathbb R^n)^n\cap L^2(\mathbb R^n)^n, $
which is the same order as expected from the heat equation. Thereafter, Wiegner~\cite{Wie87} shows that if the initial data
	$u_0\in L^2(\mathbb R^n)^n$
satisfies
	$\left\|e^{t\Delta}u_0\right\|_{L^2}\leq C(1+t)^{-\alpha}$
for
	$\alpha\geq0, $
then the corresponding weak solution satisfies
	\begin{align*}
		\left\| u(t)\right\|_{L^2}\leq C(1+t)^{-\min\left\{\alpha, \frac n4+\frac12\right\}}\quad\text{for $t>0$}.
	\end{align*}
In particular, if the initial data satisfies
	$u_0\in L^2(\mathbb R^3)^3$
and
	$(1+|x|)u_0\in L^1(\mathbb R^3)^3, $
we may take
	$\alpha=\frac54$
and hence obtain
	$\left\| u(t)\right\|_{L^2}\leq C(1+t)^{-\frac54}.$
In \cite{SW96}, Schonbek and Wiegner consider the temporal decays of the higher order norms in the case of
	$n\leq 5, $
and show that if the solution satisfies
	$\left\|u(t)\right\|_{L^2}=O(t^{-\mu})$
as
	$t\to\infty$
for some
	$\mu\geq0, $
then it holds that
	$\left\|\nabla^mu(t)\right\|_{L^2}
	=O(t^{-\mu-\frac m2})$
as
	$t\to\infty$
for all
	$m\in\mathbb N$.
Fujigaki and Miyakawa~\cite{FM01} prove that if the initial data
	$u_0\in L^n(\mathbb R^n)^n$
is small and satisfies
	$(1+|x|)u_0\in L^1(\mathbb R^n)^n, $
then the unique strong solution satisfies
	$\left\|u(t)\right\|_{L^p}
	\leq Ct^{-\frac12-\frac n2(1-\frac1p)}$
for
	$t>0$
and
	$p\in[1, \infty].$
They also derive asymptotic profiles of the solution in
	$L^p(\mathbb R^n)^n$
as
	$t\to\infty$.

As for the case of
	$\Omega\neq0$, 
due to dispersion, the linearized solution exhibits faster decays
	\begin{align*}
		\left\|T_\Omega(t)u_0\right\|_{\dot W^{m, p}}
		\leq C(1+|\Omega|t)^{-1+\frac2p}t^{-\frac32(1-\frac1p)}\left\|u_0\right\|_{L^1}
		\quad\text{for $t>0,$}
	\end{align*}
where
	$p\in[2, \infty]$
and
	$m\geq0.$
Therefore, it is of interest to see whether the solution of the nonlinear problem~\eqref{eq:3} also satisfies the faster temporal decay estimates. In this direction, Ahn, Kim, and Lee~\cite{AKL22} consider the decays of the solution of \eqref{eq:4} in Theorem~\ref{thm:1}, and obtain the estimates of the form
	\begin{align*}
		\left\|u(t)\right\|_{L^p}
		\leq C\left\|u_0\right\|_{L^r}t^{-\frac32(\frac1r-\frac1p)}(1+|\Omega|t)^{-1+\frac2p}
		\quad\text{for
			$t>0$, }
	\end{align*}
where
	$C>0$
is independent of
	$u, t, $
and
	$\Omega$, 
and the exponents
	$p$
and
	$r$
satisfy
	\begin{align*}
		\left(\frac13<\right)
		\frac12-\frac12\left(\frac1q-\frac s3\right)
		\leq \frac1p
		\leq\frac1q
		\quad\text{and}\quad
		\frac1r=\frac13+\frac1q+\frac1p-\frac s3.
	\end{align*}
Kim~\cite{Kim22} considers the magnetohydrodynamics equations with the Coriolis force, and show that for initial velocity field
	$u_0\in L^1(\mathbb R^3)^3$
satisfying
	$u_0\in H^s(\mathbb R^3)^3$
for some 
	$\frac12<s<\frac32$
and several suitable conditions, the global solution
	$u$
exists and exhibits the faster decays
	\begin{align*}
		\left\|u(t)\right\|_{L^p}
		\leq Ct^{-\frac32(1-\frac1p)}(|\Omega|t)^{-1+\frac2p}
	\end{align*}
with
	$0<\gamma\leq s-\frac12$
and
	$(\frac13<)
	\max\left\{\frac14+\frac s6, \frac{4-\gamma}{8+\gamma}\right\}
	<\frac1p
	\leq \frac12.$
Later on, Egashira and Takada~\cite{ET23} also study temporal decays of the solution in Theorem~\ref{thm:1},  
and under the conditions 
	$u_0\in L^1(\mathbb R^3)^3$
or
	$(1+|x|)u_0\in L^1(\mathbb R^3)^3$, 
they obtain the decay estimates
	\begin{align*}
		\left\|u(t)\right\|_{L^p}
		\leq Ct^{-\frac32(1-\frac1p)}(1+|\Omega|t)^{-1+\frac2p}
		\quad\text{and}\quad
		\lim_{t\to\infty}t^{\frac32(1-\frac1p)}(1+|\Omega|t)^{1-\frac2p}\left\|u(t)\right\|_{L^p}
		=0, 
	\end{align*}
and
	\begin{align*}
		\left\|u(t)\right\|_{L^p}
		\leq Ct^{-\frac12-\frac32(1-\frac1p)}(1+|\Omega|t)^{-1+\frac2p}
	\end{align*}
for all
	$t>0, $
and the exponent
	$p$
satisfying
	\begin{align}
		\frac12-\frac12\left(\frac1q-\frac s3\right)
		\leq \frac1p
		\leq \frac12.\label{eq:9}
	\end{align}
They also derive the asymptotic profile of the solution in the case of
	$(1+|x|)u_0\in L^1(\mathbb R^3)^3, $
which corresponds to the result of \cite{FM01} for	
	$\Omega=0.$
We note that the range of the exponents
	$p$
permitted in \cite{AKL22, Kim22, ET23} is restricted to a bounded subinterval of
	$[2, 3)$, 
namely, an interval of the form
	$[2, p_*]$
for some
	$p_*<3.$
We also refer to~\cite{IKNP24} for the temporal decays of the scaling critical case
	$\dot H^\frac12(\mathbb R^3)^3.$
Meanwhile, the author~\cite{Yos26} has recently considered  decay estimates of the global solution for the viscous $\beta$-plane equations, and shown that if the initial data
	$\omega_0\in L^1(\mathbb R^2)$
satisfies several conditions, the corresponding global solution exhibits the faster decays
	\begin{align*}
		\left\|\omega(t)\right\|_{\dot W^{s, p}}
		\leq Ct^{-\frac s2-1+\frac1p}\min\left\{1, |\beta|^{-1+\frac2p}t^{-\frac32(1-\frac2p)}\right\}
	\end{align*}
for all
	$t>0$, 
	$s\geq 0$, 
and
	$p\in[2, \infty].$

Here we show that we may extend the range of the exponent
	$p$
from the one given by \eqref{eq:9} to
	$[2, \infty], $
and also give decay estimates of the spatial derivatives of
	$u, $
based on a strategy similar to that of \cite{Yos26}.
Recall that
	$C_*>0$
is the constant appearing in the statement of Theorem~\ref{thm:1}. Our theorem on this issue now reads: 
\begin{theorem}\label{thm:3}
	Let the exponents
		$s, q, $
	and
		$\theta$
	satisfy the conditions \eqref{eq:5} and \eqref{eq:6}.
	Then, there is a positive constant
		$C_{\star}(\leq C_*)$
	such that for any initial velocity
		$u_0\in L^1(\mathbb R^3)^3\cap \dot H^s(\mathbb R^3)^3$
	satisfying
		$\operatorname{div}u_0=0$
	and
		$|\Omega|^{-\frac12(s-\frac12)}\left\|u_0\right\|_{\dot H^s}
		\leq C_{\star}, $
	it follows that for any
		$m\geq 0$
	and
		$p\in[2, \infty], $
	there exists a positive constant
		$C$
	dependent only on
		$s, $
		$q, $
		$\theta, $
		$m, $
		$p, $
		$|\Omega|\left\|u_0\right\|_{L^1}, $
		$|\Omega|^\frac14\left\|u_0\right\|_{L^2}, $
	and
		$|\Omega|^{-\frac12(s-\frac12)}\left\|u_0\right\|_{\dot H^s}, $
	such that
	the global solution of \eqref{eq:4} obtained in Theorem~\ref{thm:1} satisfies
		\begin{align*}
			\left\|u(t)\right\|_{\dot W^{m, p}}
			\leq C|\Omega|^{-1}t^{-\frac m2-\frac32(1-\frac1p)}(1+|\Omega|t)^{-1+\frac 2p}
		\end{align*}
	for all
		$t>0, $
	and 
		\begin{align*}
			\lim_{t\to\infty}t^{\frac m2+\frac32(1-\frac1p)}(1+|\Omega|t)^{1-\frac 2p}\left\|u(t)\right\|_{\dot W^{m, p}}
			=0.
		\end{align*}
	Moreover, if
		$|x|u_0\in L^1(\mathbb R^3)^3$
	holds, then there exists a positive constant
		$C'$
	dependent additionally on
		$|\Omega|^{\frac32}\left\||x|u_0\right\|_{L^1}, $
	satisfying
		\begin{align*}
			\left\|u(t)\right\|_{\dot W^{m, p}}
			\leq C'|\Omega|^{-\frac32}t^{-\frac12-\frac m2-\frac32(1-\frac1p)}(1+|\Omega|t)^{-1+\frac2p}
		\end{align*}
	for all
		$t>0.$
\end{theorem}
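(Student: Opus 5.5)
We first rescale to $|\Omega|=1$. If $u$ solves \eqref{eq:4} with parameter $\Omega$, then $u_\lambda(t,x)=\lambda u(\lambda^2t,\lambda x)$ with $\lambda=|\Omega|^{-1/2}$ solves it with parameter $\pm1$. This is exactly why the constants depend only on the scale-invariant quantities $|\Omega|\|u_0\|_{L^1}$, $|\Omega|^{1/4}\|u_0\|_{L^2}$, $|\Omega|^{-\frac12(s-\frac12)}\|u_0\|_{\dot H^s}$ and $|\Omega|^{3/2}\||x|u_0\|_{L^1}$. So we assume $\Omega=\pm1$ throughout.

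The plan has two stages. The first is a bootstrap for a weighted $L^2$–$L^\infty$ quantity. Define
\begin{align*}
M(T)\coloneqq\sup_{0<t<T}\sum_{p\in\{2,\infty\}}\sum_{m\le N} t^{\frac m2+\frac32(1-\frac1p)}(1+t)^{1-\frac2p}\|\nabla^m u(t)\|_{L^p}.
\end{align*}
For small $t$, the smoothing estimates from the proof of Theorem~\ref{thm:4} and the Theorem~\ref{thm:1} bound give $M(1)\le C$. For $t\ge1$ we use the Duhamel formula \eqref{eq:4} together with the linear dispersive estimate
\begin{align*}
\|\nabla^m T_\Omega(t)f\|_{L^p}\le C(1+t)^{-1+\frac2p}t^{-\frac m2-\frac32(1-\frac1r)}\|f\|_{L^r},
\end{align*}
which comes from interpolating the $L^1\to L^\infty$ estimate for $e^{t\Delta}\mathscr G_\pm(t)$ (stationary phase, decay $t^{-1}$) with the $L^2$ bound. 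The linear part contributes $C\|u_0\|_{L^1}$.

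For the nonlinear term we split the integral into $[0,t/2]$ and $[t/2,t]$.
\begin{itemize}
\item On $[0,t/2]$, put the derivative and $\mathbb P\operatorname{div}$ onto the kernel and use $\|u\otimes u\|_{L^1}\le\|u\|_{L^2}^2$. The energy equality of Theorem~\ref{thm:4} gives $\|u\|_{L^2}\le C$. The bootstrap gives $\|u(\tau)\|_{L^2}\lesssim \tau^{-3/4}$, so $\|u\|_{L^2}^2\lesssim \tau^{-3/2}$, which is integrable near infinity. This yields the rate $t^{-\frac12-\frac m2-\frac32(1-\frac1p)}(1+t)^{-1+\frac2p}$, which is better than required.
\item On $[t/2,t]$, use $L^p$-type bounds for $u\otimes u$. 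For example, $\|\nabla^{m+1}(u\otimes u)\|_{L^2}\lesssim\sum\|\nabla^j u\|_{L^\infty}\|\nabla^{m+1-j}u\|_{L^2}$, with the dispersive factor $(1+\tau)^{-1}$ carried by the $L^\infty$ factor. This gives a product of decays strictly faster than needed, times $M(T)^2$.
\end{itemize}
Combining the two pieces gives $M(T)\le C\|u_0\|_{L^1}+C_0+CM(T)^2t^{-\delta}$. Closing this needs either smallness or an absorption argument; I would first try to gain smallness from the extra decay on $[t/2,t]$, possibly using $C_\star\le C_*$ to make the Theorem~\ref{thm:1} Strichartz norm small on $[T_0,\infty)$.

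I expect the main obstacle to be closing the bootstrap without assuming $\|u_0\|_{L^1}$ small, since only $\dot H^s$ smallness is available. I would handle this by first establishing the heat-type rate with no $\Omega$ gain, via Schonbek-type arguments. Then I would treat the dispersive factor as a perturbation. The Strichartz norm $\|u\|_{L^\theta(T_0,\infty;\dot W^{s,q})}$ becomes small for large $T_0$, and that smallness lets the quadratic term be absorbed. Intermediate $p\in(2,\infty)$ then follows by interpolating the $L^2$ and $L^\infty$ bounds, since the weights interpolate exactly.

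For the limit statement, approximate $u_0$ in $L^1$ by $C_c^\infty$ data with vanishing moments. For such data the linear part decays with an extra $t^{-1/2}$, and the nonlinear part already has the extra $t^{-1/2}$, so the weighted quantity tends to $0$.

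For the $|x|u_0\in L^1$ case, use $\int u_0\,dx=0$, which follows from $\operatorname{div}u_0=0$ and $u_0\in L^1$. Writing $\widehat{u_0}(\xi)=\widehat{u_0}(\xi)-\widehat{u_0}(0)$ gains $|\xi|\,\||x|u_0\|_{L^1}$ on the linear part, i.e. a factor $t^{-1/2}$. The nonlinear part already gains $t^{-1/2}$ through the divergence structure, as computed above. Rerunning the bootstrap with the improved weight completes the proof.
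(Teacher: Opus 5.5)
Your proposal has a genuine gap, and it sits exactly where you place the ``main obstacle'': the a priori estimate is never closed under the actual hypotheses. There only $|\Omega|^{-\frac12(s-\frac12)}\|u_0\|_{\dot H^s}$ is small, while $|\Omega|\|u_0\|_{L^1}$ and $|\Omega|^{\frac14}\|u_0\|_{L^2}$ may be large.

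As you set it up, both Duhamel pieces are quadratic in $M(T)$ with no small coefficient. On $[0,t/2]$ you use $\|u(\tau)\|_{L^2}^2\lesssim M(T)^2\tau^{-3/2}$. On $[t/2,t]$ you bound $\nabla^{m+1}(u\otimes u)$ by products of two factors, both controlled only through $M(T)$; for $m=N$ this also needs $N+1$ derivatives, which $M(T)$ does not contain. An inequality $M(T)\le C\|u_0\|_{L^1}+C_0+CM(T)^2t^{-\delta}$ says nothing for $t$ of order $|\Omega|^{-1}$. A large-$T_0$ argument does not rescue it, since $M(T_0)$ is then not bounded in terms of the data.

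Your fallbacks are only sketched. Fourier splitting does give the heat rate for $\|u(t)\|_{L^2}$ without smallness. But a perturbative treatment of the dispersive factor would first need heat-type bounds for all $\dot W^{m,\infty}$-norms, with constants depending only on the listed scale-invariant quantities, and then an estimate that is linear in the dispersive quantity. Also, ``smallness of the Strichartz norm on $[T_0,\infty)$'' is never attached to an estimate in which it multiplies the bootstrap quantity.

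The missing idea is that the full norm $\|u\|_{L^\theta([0,\infty);\dot W^{s,q})}$ is already small, not just its tail, and that one should estimate at the Strichartz exponent $p=q$ first.

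\begin{itemize}
\item \emph{Smallness of the Strichartz factor.} By Theorem~\ref{thm:1} this norm is at most $C|\Omega|^{-\frac1\theta+\frac32(\frac12-\frac1q)}\|u_0\|_{\dot H^s}$. Apply H\"older in time against the kernel $(1+|\Omega|(t-\tau))^{-1+\frac2q}(t-\tau)^{-\frac12-\frac32(\frac1q-\frac s3)}$, which lies in $L^{\theta'}(0,\infty)$ precisely because of \eqref{eq:5}--\eqref{eq:6}. The result is exactly the factor $|\Omega|^{-\frac12(s-\frac12)}\|u_0\|_{\dot H^s}\le C_\star$.
\item \emph{The piece on $[t/2,t]$.} Bound $u\otimes u$ by $\|u\|_{L^q}\|u\|_{\dot W^{s,q}}$ (H\"older plus Sobolev). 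This contribution is then linear in $U^{0,q}_0(t)$ with a small coefficient, hence absorbable.
\item \emph{The piece on $[0,t/2]$.} Spend the energy bound on one factor only, $\|u\otimes u\|\lesssim\|u\|_{L^q}\|u_0\|_{L^2}$. This gives a bound of size $(|\Omega|t)^{-\frac14}|\Omega|^{\frac14}\|u_0\|_{L^2}U^{0,q}_0(t)$ relative to the target rate. Interpolate it with the trivial bound from $\|u\otimes u\|_{L^1}\le\|u_0\|_{L^2}^2$, of relative size $(|\Omega|t)^{\frac12}(|\Omega|^{\frac14}\|u_0\|_{L^2})^2$. This yields the sublinear term $(|\Omega|^{\frac14}\|u_0\|_{L^2})^{\frac43}U^{0,q}_0(t)^{\frac23}$.
\end{itemize}

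The resulting inequality closes with no restriction on the size of $\|u_0\|_{L^1}$ or $\|u_0\|_{L^2}$. Higher $\dot W^{k,q}$ norms then follow by induction in small steps $\alpha$, which also avoids your derivative loss. The $L^2$ and $L^\infty$ bounds are then no longer self-referential: their $[t/2,t]$ parts pair the already established $\dot W^{k,q}$ bounds with the small $\dot W^{s,q}$ factor.

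Once these bounds are available, your interpolation in $p$, your density argument for the limit, and your treatment of the case $|x|u_0\in L^1$ are fine.
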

\begin{remark}\label{rem:4}\rm
In the case of
	$(1+|x|)u_0\in L^1(\mathbb R^3)^3, $
it is also possible to derive asymptotic profiles of the solution of the type in \citep[Theorem~2.3]{ET23} with respect to
	$\dot W^{m, p}$-norms
for
	$m\geq 0$
and
	$p\in [2, \infty].$
\end{remark}
\begin{remark}\label{rem:1.6}\rm
For any
	$\lambda>0, $
if a pair
	$(u, \Omega)$
gives a solution of the system~\eqref{eq:3}, then so does
	$(u_\lambda, \Omega_\lambda)$
defined by
	\begin{align*}
		u_\lambda(t, x)
		\coloneqq \lambda u(\lambda^2 t, \lambda x)
		\quad\text{and}\quad
		\Omega_\lambda
		\coloneqq \lambda^2\Omega.
	\end{align*}
Note that all the quantities
	$|\Omega|\left\|u_0\right\|_{L^1}, $
	$|\Omega|^\frac32\left\||x|u_0\right\|_{L^1}, $
	$|\Omega|^\frac14\left\|u_0\right\|_{L^2}, $
	and
	$|\Omega|^{-\frac12(s-\frac12)}\left\|u_0\right\|_{\dot H^s}$, 
which appear in the statement of Theorem~\ref{thm:3}, 
satisfy the invariance under the above scaling transformation.
\end{remark}

\section{Preliminaries}\label{sect:2}
In this section, we collect several estimates employed later in our arguments.
We start by recalling the definition of the homogeneous Besov spaces 
	$\dot B^s_{p, r}(\mathbb R^3)$. 
Let $\varphi\colon \mathbb R^3\to [0, 1]$ be a smooth radial function satisfying 
	$\supp\varphi\subset\left\{3/4\leq |\xi|\leq 8/3\right\}$ and
	$\sum_{k\in\mathbb Z}\varphi(2^{-k}\xi)=1$
for all 
	$\xi\in\mathbb R^3\setminus\{0\}$. 
We define the homogeneous Littlewood--Paley projection operators 
	$P_k$ 
for 
	$k\in\mathbb Z$ 
by 
	$P_kf\coloneqq \mathcal F^{-1}(\varphi(2^{-k}\cdot)\widehat f)$, 
	$f\in\mathcal S'(\mathbb R^3).$
For 
	$s\in\mathbb R, 1\leq p\leq \infty$
and 
	$1\leq r\leq \infty$, 
we define the homogeneous Besov norm 
	$\left\|\cdot\right\|_{\dot B^s_{p, r}}$ 
 by
	$$\left\|f\right\|_{\dot B^s_{p, r}}
	\coloneqq \left\|\left(2^{sk}\left\|P_kf\right\|_{L^p}\right)_{k\in\mathbb Z}\right\|_{\ell^r}.$$
The homogeneous Besov space
	$\dot B^s_{p, r}(\mathbb R^3)$
is the set of all 
	$f\in\mathcal S'(\mathbb R^3)$
satisfying
	\begin{align*}
		\left\|f\right\|_{\dot B^s_{p, r}}<\infty
		\quad\text{and}\quad
		\big\|\mathcal F^{-1}\big(\theta(\lambda\cdot)\widehat f\big)\big\|_{L^\infty}\to0
		\text{ as $\lambda\to\infty$ for any $\theta\in\mathcal D(\mathbb R^3).$}
	\end{align*}
We also define the homogeneous Sobolev norms
	$\left\|\cdot\right\|_{\dot W^{s, p}}$
for
	$s\in\mathbb R$
and
	$p\in[1, \infty]$
by
	\begin{align*}
		\left\|f\right\|_{\dot W^{s, p}}
		\coloneqq\left\|(-\Delta)^\frac s2f\right\|_{L^p}
		=\big\|\mathcal F^{-1}(|\xi|^s\widehat f)\big\|_{L^p}.
	\end{align*}
For basic facts including the properties of the operators
	$P_k$
and the Besov spaces
	$\dot B^s_{p, r}(\mathbb R^3)$
necessary for our analysis, see ~\cite{BCD11}, for example.

We next discuss decay estimates of the linearized solutions. 

\begin{lemma}[Dispersive estimate, \protect{\citep[Lemma~2.2]{KLT14}}]\label{lem:2.1}
For any
	$p\in[2, \infty], $
there exists a constant
	$C>0$
satisfying
	\begin{align*}
		\left\|\mathscr G_\pm(\tau)P_kf\right\|_{L^p}
		\leq C2^{3k(1-\frac2p)}(1+|\tau|)^{-1+\frac2p}
		\left\|P_kf\right\|_{L^{p'}}
	\end{align*}
for all
	$\tau\in\mathbb R, $
	$k\in\mathbb Z$
and
	$f\in \mathcal S(\mathbb R^3).$
\end{lemma}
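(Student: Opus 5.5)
The plan is to prove the estimate at the two endpoints $p=2$ and $p=\infty$ and then obtain the intermediate range by interpolation. At $p=2$ the operator $\mathscr G_\pm(\tau)$ is the Fourier multiplier $e^{\pm i\tau\xi_3/|\xi|}$, which has modulus one. By Plancherel it is an isometry on $L^2$, and $P_k$ commutes with it, so
\begin{align*}
	\left\|\mathscr G_\pm(\tau)P_kf\right\|_{L^2}=\left\|P_kf\right\|_{L^2}.
\end{align*}
This is the claim with $p=2$, where both $2^{3k(1-\frac2p)}$ and $(1+|\tau|)^{-1+\frac2p}$ equal $1$.

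For $p=\infty$ I write $\mathscr G_\pm(\tau)P_kf=K_{k,\tau}*\widetilde P_kP_kf$. Here $\widetilde P_k$ is a fattened Littlewood--Paley cutoff with symbol $\widetilde\varphi(2^{-k}\xi)$, chosen equal to $1$ on $\supp\varphi(2^{-k}\cdot)$, and
\begin{align*}
	K_{k,\tau}(x)=\int_{\mathbb R^3}e^{ix\cdot\xi\pm i\tau\frac{\xi_3}{|\xi|}}\widetilde\varphi(2^{-k}\xi)\,d\xi .
\end{align*}
By Young's inequality it suffices to show $\|K_{k,\tau}\|_{L^\infty}\le C2^{3k}(1+|\tau|)^{-1}$. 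The phase $\xi_3/|\xi|$ is homogeneous of degree $0$, so the change of variables $\xi=2^k\eta$ gives $K_{k,\tau}(x)=2^{3k}K_{0,\tau}(2^kx)$ with the same $\tau$. The problem therefore reduces to the uniform bound $\sup_x|K_{0,\tau}(x)|\le C(1+|\tau|)^{-1}$.

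For $|\tau|\le1$ this follows by bounding the integrand by $\widetilde\varphi$. For $|\tau|\ge1$ I pass to spherical coordinates $\eta=r\omega$ with $\omega_3=\cos\phi$. The phase becomes $rx\cdot\omega\pm\tau\cos\phi$, and its Hessian in the angular variables degenerates near the poles and the equator. I expect this to be the main obstacle. My plan is to follow \citep[Lemma~2.2]{KLT14}, which in turn relies on the estimates of Dutrifoy and Chemin et al. The key observation is that the symbol $\xi_3/|\xi|$ restricted to the annulus has a Hessian of rank at least one in the $\phi$-direction, namely $\partial_\phi^2\cos\phi=-\cos\phi$ and $\partial_\phi\cos\phi=-\sin\phi$, and these never vanish simultaneously. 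Van der Corput's lemma in $\phi$, with the remaining variables $(r,\theta)$ integrated trivially, should then give decay $|\tau|^{-1/2}$ uniformly in $x$. That is not enough by itself. To reach $|\tau|^{-1}$ I would perform a further stationary phase in the radial/azimuthal combination, exploiting the fact that the full $3$D phase $x\cdot\eta\pm\tau\eta_3/|\eta|$ has two nondegenerate curvature directions away from $\eta_1=\eta_2=0$. Near the axis, where $\sin\phi$ is small, I would use a dyadic decomposition in $\sin\phi$ and balance the measure of each piece against the decay obtained on it. This yields $(1+|\tau|)^{-1}$ uniformly in $x$.

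Finally, Riesz--Thorin interpolation between the bound $L^2\to L^2$ with constant $1$ and the bound $L^1\to L^\infty$ with constant $C2^{3k}(1+|\tau|)^{-1}$ applies to the operator $\mathscr G_\pm(\tau)\widetilde P_k$. With $\frac1p=\frac{1-\vartheta}2$ and $\vartheta=1-\frac2p$ it gives
\begin{align*}
	\left\|\mathscr G_\pm(\tau)P_kf\right\|_{L^p}\le C2^{3k(1-\frac2p)}(1+|\tau|)^{-1+\frac2p}\left\|P_kf\right\|_{L^{p'}},
\end{align*}
which is the assertion of Lemma~\ref{lem:2.1}.
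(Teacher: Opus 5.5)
Your reductions are correct and follow the standard route of \citep[Lemma~2.2]{KLT14}, which the paper only cites. These are Plancherel at $p=2$, the rescaling $K_{k,\tau}(x)=2^{3k}K_{0,\tau}(2^kx)$ (valid because $\xi_3/|\xi|$ is $0$-homogeneous), Young's inequality, and Riesz--Thorin for $\mathscr G_\pm(\tau)\widetilde P_k$. The gap is the one step with real content: the uniform bound $\sup_x|K_{0,\tau}(x)|\lesssim(1+|\tau|)^{-1}$. You do not prove it, and the route you outline does not work as written.

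In spherical coordinates $x\cdot\eta=r\,x\cdot\omega$ is no longer linear, so it contributes $x$-dependent terms to $\partial_\phi$ and $\partial_\phi^2$ of the phase. The fact that $\sin\phi$ and $\cos\phi$ never vanish together is therefore irrelevant for the full phase. Concretely, for fixed $(r,\theta)$ the phase is
\[
r(x_h\cdot e_\theta)\sin\phi+(rx_3\pm\tau)\cos\phi=R\cos(\phi-\phi_0),\qquad R^2=r^2(x_h\cdot e_\theta)^2+(rx_3\pm\tau)^2 .
\]
Van der Corput in $\phi$ then gives only $\min\{1,R^{-1/2}\}$. Moreover $R=0$ for some $r$ in the support when, for instance, $x_h=0$ and $x_3=\mp\tau/r_0$. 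So integrating $(r,\theta)$ ``trivially'' does not yield $|\tau|^{-1/2}$ uniformly in $x$. The subsequent ``further stationary phase'' and the dyadic decomposition in $\sin\phi$ are not specified. They also target a degeneracy near the axis that does not exist.

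The missing idea is to compute in Cartesian coordinates, where the linear term $x\cdot\eta$ drops out of the Hessian, so uniformity in $x$ is automatic. By rotational symmetry about $e_3$ and homogeneity it suffices to take $\eta=(\sin\phi,0,\cos\phi)$, where
\begin{align*}
	D^2(\eta_3/|\eta|)=
	\begin{pmatrix}
		-\cos\phi+3\cos\phi\sin^2\phi & 0 & \sin\phi\,(3\cos^2\phi-1)\\
		0 & -\cos\phi & 0\\
		\sin\phi\,(3\cos^2\phi-1) & 0 & -3\cos\phi\sin^2\phi
	\end{pmatrix}.
\end{align*}
Its $\{1,3\}$ principal minor equals $-\sin^2\phi$. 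At the poles the upper-left $2\times2$ block is $-\cos\phi\,I$ with $\cos\phi=\pm1$. Hence the rank is at least $2$ on the whole annulus. The determinant is $\cos\phi\sin^2\phi$, so the rank drops from $3$ to $2$ only at the poles and on the equator, and the axis is harmless.

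Equivalently, the hypersurface $\{(\eta,\eta_3/|\eta|)\}\subset\mathbb R^4$ has at least two nonvanishing principal curvatures over $\supp\widetilde\varphi$. Littman's theorem on Fourier transforms of surface-carried measures (the rank-$k$ decay estimate) then gives $|K_{0,\tau}(x)|\lesssim(1+|(x,\tau)|)^{-1}\le(1+|\tau|)^{-1}$ uniformly in $x$. With this in place of your spherical-coordinate sketch, the rest of your argument is complete.
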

\begin{proposition}[Linear decay estimate]\label{prop:2.2}
Let
	$s\geq0, $
	$p\in[2, \infty], $
and
	$q\in[1, p']$
be given. Then there is a constant
	$C>0$
satisfying
	\begin{align*}
		\left\|e^{t(\Delta-\Omega \mathbb P J \mathbb P)}\mathbb Pf\right\|_{\dot W^{s, p}}
		\leq C t^{-\frac s2-\frac32(\frac1q-\frac1p)}(1+|\Omega|t)^{-1+\frac2p}
		\left\|f\right\|_{L^q}
	\end{align*}
for all
	$t>0, $
	$\Omega\in\mathbb R$
and
	$f\in L^q(\mathbb R^3)^3$.
\end{proposition}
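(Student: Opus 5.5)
We will prove Proposition~\ref{prop:2.2} by a Littlewood--Paley decomposition, treating the heat factor and the dispersive factor separately. The basic tool is the explicit representation of $T_\Omega(t)$ from Section~\ref{sect:1}. Since $\mathbb Pf$ is divergence free, we may write
\begin{align*}
	e^{t(\Delta-\Omega\mathbb PJ\mathbb P)}\mathbb Pf
	=\frac12\sum_\pm e^{t\Delta}\mathscr G_\pm(\Omega t)(I\pm\mathcal R)\mathbb Pf.
\end{align*}
Everything reduces to bounding, for each $k\in\mathbb Z$, the frequency-localized piece $(-\Delta)^{\frac s2}e^{t\Delta}\mathscr G_\pm(\Omega t)(I\pm\mathcal R)\mathbb PP_kf$ in $L^p$, and then summing in $k$.

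\textbf{Step 1: a frequency-localized bound.} Let $\widetilde P_k$ be a fattened projection with $\widetilde P_kP_k=P_k$. Apply Lemma~\ref{lem:2.1} to pass from $L^p$ to $L^{p'}$ at the cost of $2^{3k(1-\frac2p)}(1+|\Omega| t)^{-1+\frac2p}$. Next, Bernstein's inequality takes $L^{p'}$ to $L^q$ (valid since $q\le p'$) at the cost of $2^{3k(\frac1q-\frac1{p'})}$. The remaining operators are $(-\Delta)^{\frac s2}$, $e^{t\Delta}$, $I\pm\mathcal R$, $\mathbb P$, and $\widetilde P_k$. Their composite localized multiplier is $2^{ks}e^{-ct2^{2k}}$ times a symbol that is homogeneous of degree $0$ and smooth on the annulus. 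Its kernel therefore has $L^1$ norm bounded by $C2^{ks}e^{-ct2^{2k}}$, uniformly in $k$, by scaling. Here Riesz transforms cause no trouble because everything is localized to an annulus. The exponents combine as
\begin{align*}
	3\Big(1-\frac2p\Big)+3\Big(\frac1q-1+\frac1p\Big)=3\Big(\frac1q-\frac1p\Big),
\end{align*}
which gives
\begin{align*}
	\big\|(-\Delta)^{\frac s2}e^{t(\Delta-\Omega\mathbb PJ\mathbb P)}\mathbb PP_kf\big\|_{L^p}
	\le C2^{k(s+3(\frac1q-\frac1p))}e^{-ct2^{2k}}(1+|\Omega|t)^{-1+\frac2p}\|P_kf\|_{L^q}.
\end{align*}

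\textbf{Step 2: summation in $k$.} We use $\|P_kf\|_{L^q}\le C\|f\|_{L^q}$ together with
\begin{align*}
	\sum_k 2^{k\sigma}e^{-ct2^{2k}}\le Ct^{-\frac\sigma2},
	\qquad \sigma=s+3\Big(\frac1q-\frac1p\Big).
\end{align*}
This sum converges provided $\sigma>0$, and it yields the claimed bound.

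\textbf{The main obstacle} is the degenerate case $\sigma=0$, that is, $s=0$ and $q=p'=p=2$. There the low-frequency sum diverges. We handle it directly with Plancherel, since $T_\Omega(t)$ is a bounded multiplier on $L^2$, and the claimed factor $(1+|\Omega|t)^{0}=1$ is trivially correct. A second subtlety is that the triangle inequality in $k$ requires the sum to converge in $L^p$, including for $p=\infty$ and $q=1$. This is fine because the dyadic bounds are absolutely summable when $\sigma>0$, and the resulting series represents the function, as one checks by density for $f\in\mathcal S$ followed by a limiting argument. Finally, the constants from Lemma~\ref{lem:2.1} and from the kernel bounds are independent of $k$, $t$, and $\Omega$, so the estimate holds uniformly, as stated.
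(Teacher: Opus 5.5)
Your proposal is correct and follows essentially the paper's proof. The paper also treats the degenerate case $s=0$, $p=q=2$ separately via Plancherel. Otherwise it combines Lemma~\ref{lem:2.1}, Bernstein's inequality, and the heat factor $e^{-ct2^{2k}}$ on each dyadic block, then sums in $k$ using $s+3(\frac1q-\frac1p)>0$. The only difference is packaging: the paper handles $\mathbb P$ and the summation through the norms $\dot B^s_{p,1}$ and $\dot B^0_{q,\infty}$, using boundedness of $\mathbb P$ on Besov spaces. You instead bound the frequency-localized multiplier kernels directly in $L^1$, which amounts to the same thing.
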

\begin{remark}\label{rem:2.3}\rm
The stated estimates are almost already known in \citep[Lemma~2.5]{Kim22} and \citep[Lemma~3.2~(1)]{ET23}, except that here the vector field
	$f\in L^q(\mathbb R^3)^3$
itself is not necessarily divergence-free.
\end{remark}
\proof 
In the case of
	$s=0$
and
	$p=q, $
the assumption
	$q\leq p'\leq p$
gives
	$p=q=2.$
Then the desired estimate follows simply by the Plancherel theorem. Thus we may assume that
	$s>0$
or
	$\frac1q>\frac1p$
hold.
Thanks to the boundedness of the Helmholtz projection
	$\mathbb P$
on the Besov spaces, it is enough to show
	\begin{align*}
		\left\|T_\Omega(t)g\right\|_{\dot B^s_{p, 1}}
		\lesssim t^{-\frac s2-\frac32(\frac1q-\frac1p)}(1+|\Omega|t)^{-1+\frac2p}
		\left\|g\right\|_{\dot B^0_{q, \infty}}
	\end{align*}
for 
	$g\in \dot B^0_{q, \infty}(\mathbb R^3).$
By Lemma~\ref{lem:2.1} and the Bernstein inequality, for each
	$k\in\mathbb Z, $
we have
	\begin{align*}
		&\left\|P_ke^{t\Delta}\mathscr G_{\pm}(\Omega t)g\right\|_{L^p}
		\lesssim 2^{3k(1-\frac2p)}(1+|\Omega|t)^{-1+\frac2p}\left\|P_{k} e^{t\Delta}g\right\|_{L^{p'}}\\
		&\lesssim 2^{3k(1-\frac2p)}(1+|\Omega|t)^{-1+\frac2p}2^{3k(\frac1q-1+\frac1p)}
		\left\|P_{k} e^{t\Delta}g\right\|_{L^q}\\
		&\lesssim 2^{3k(\frac1q-\frac1p)}(1+|\Omega|t)^{-1+\frac2p}e^{-ct2^{2k}}
		\left\|P_{k} g\right\|_{L^q}, 
	\end{align*}
where
	$c>0$
is a universal constant.
Multiplying the inequality by
	$2^s$
and summing up with respect to
	$k\in\mathbb Z, $
we obtain
	\begin{align*}
		&\left\|T_\Omega(t)g\right\|_{\dot B^s_{p, 1}}
		\lesssim\sum_{k\in\mathbb Z}\sum_{\sigma\in\{\pm\}}2^{sk}\left\|P_ke^{t\Delta}\mathscr G_{\sigma}(\Omega t)(I+\sigma \mathcal R)g\right\|_{L^p}\\
		&\lesssim \sum_{k\in\mathbb Z}2^{sk+3k(\frac1q-\frac1p)}(1+|\Omega|t)^{-1+\frac2p}e^{-ct2^{2k}}\left\|P_k g\right\|_{L^q}\\
		&\leq \left(\sup_{\tau>0}\sum_{k\in\mathbb Z}
		(\tau2^{2k})^{\frac s2+\frac32(\frac1q-\frac1p)}
		e^{-c\tau2^{2k}}
		\right)
		 t^{-\frac s2-\frac32(\frac1q-\frac1p)}(1+|\Omega|t)^{-1+\frac2p}\left\|g\right\|_{\dot B^0_{q, \infty}}.
	\end{align*}
The condition
	$\frac s2+\frac32(\frac1q-\frac1p)>0$
ensures the finiteness of the supremum
	\begin{align*}
		\sup_{\tau>0}\sum_{k\in\mathbb Z}
		(\tau2^{2k})^{\frac s2+\frac32(\frac1q-\frac1p)}
		e^{-c\tau2^{2k}}
		<\infty.
	\end{align*}
This completes the proof.
\qed

\begin{proposition}\label{prop:2.4'}
Assume that 
	$u_0\in L^1(\mathbb R^3)^3$
satisfies
	$\operatorname{div}u_0=0.$
Then for any
	$m\geq0$
and
	$p\in[2, \infty], $
it holds that
	\begin{align*}
		\lim_{t\to\infty}t^{\frac m2+\frac32(1-\frac1p)}(1+|\Omega|t)^{1-\frac2p}\left\|T_\Omega(t)u_0\right\|_{\dot W^{m, p}}=0.
	\end{align*}
\end{proposition}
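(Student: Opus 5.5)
The plan is a density argument built on the uniform bound of Proposition~\ref{prop:2.2}. Since $\operatorname{div}u_0=0$, we have $T_\Omega(t)u_0=e^{t(\Delta-\Omega\mathbb PJ\mathbb P)}\mathbb Pu_0$. Set
\begin{align*}
	\Phi(t)\coloneqq t^{\frac m2+\frac32(1-\frac1p)}(1+|\Omega|t)^{1-\frac2p}.
\end{align*}
Applying Proposition~\ref{prop:2.2} with $s=m$ and $q=1$ (admissible, since $1\leq p'$) to any $f\in L^1(\mathbb R^3)^3$ gives
\begin{align*}
	\Phi(t)\left\|e^{t(\Delta-\Omega\mathbb PJ\mathbb P)}\mathbb Pf\right\|_{\dot W^{m,p}}\leq C\left\|f\right\|_{L^1}
\end{align*}
uniformly in $t>0$. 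The exceptional case $m=0$, $p=q$ cannot occur because $p\geq2>1$. Hence the map $f\mapsto \Phi(t)e^{t(\Delta-\Omega\mathbb PJ\mathbb P)}\mathbb Pf$ is uniformly bounded from $L^1$ to $\dot W^{m,p}$. By a standard $\varepsilon/3$-argument, it therefore suffices to prove the limit for $f$ in a dense subclass of $L^1(\mathbb R^3)^3$. We do not need the dense class to be divergence-free, because the projection $\mathbb P$ is already built into the operator.

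As the dense class I take $f\in\mathcal S(\mathbb R^3)^3$ with $\widehat f$ compactly supported away from the origin, i.e.\ $\widehat f\in C_c^\infty(\mathbb R^3\setminus\{0\})^3$. This class is dense in $L^1$. Indeed, Schwartz functions are dense; then cut off the Fourier transform at high frequencies with $\chi(\xi/R)$, which is convolution with an approximate identity and converges in $L^1$. Also remove low frequencies: $f-\mathcal F^{-1}(\chi(\xi/\varepsilon)\widehat f)$, where $\mathcal F^{-1}(\chi(\cdot/\varepsilon))*f$ has $L^1$ norm at most $\|\check\chi\|_{L^1}\|f\|_{L^1}$ but does not tend to zero. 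Because of this, the low-frequency removal needs more care, and this is the main obstacle.

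To handle the low-frequency obstacle, I would instead choose the dense class to be $f=\operatorname{div}F$-type functions, i.e.\ $f=\partial_jg$ with $g\in\mathcal S$, or more simply Schwartz functions with $\int f=0$ componentwise. This subspace is dense in $L^1$ among functions with... no; zero mean is not dense in $L^1$, since the mean is a continuous functional. So the reduction has to exploit the gain differently. I would split instead by Littlewood--Paley frequency. Write $f=f_{\le K}+f_{>K}$ with the cutoff at $2^K$. For the high part, the estimate in the proof of Proposition~\ref{prop:2.2} gives the factor
\begin{align*}
	\sum_{2^k>2^K}(t2^{2k})^{a}e^{-ct2^{2k}},\qquad a=\frac m2+\frac32\Bigl(1-\frac1p\Bigr)>0,
\end{align*}
which tends to $0$ as $t\to\infty$ for fixed $K$. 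For the low part, I use $\|P_kf\|_{L^1}\to0$ as $k\to-\infty$. This is the key fact: for $f\in L^1$, $\|P_kf\|_{L^1}=\|(\varphi(2^{-k}\cdot))^\vee*f\|_{L^1}$, and since $\int\check\varphi=\varphi(0)=0$, approximating $f$ by $C_c$ functions shows $\varphi_k*f\to0$ in $L^1$. Indeed, for compactly supported $g$, $\|\varphi_k*g\|_{L^1}\lesssim 2^{k}\|\,|x|g\|_{L^1}$ by Taylor expansion, together with the uniform bound $\|\check\varphi\|_{L^1}$.

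Combining the two pieces, $\Phi(t)\|T_\Omega(t)u_0\|_{\dot W^{m,p}}$ is bounded by
\begin{align*}
	C\sup_{k\leq K}\|P_ku_0\|_{L^1}\cdot\sup_{\tau>0}\sum_k(\tau2^{2k})^ae^{-c\tau2^{2k}}
	+C\|u_0\|_{L^1}\sum_{k>K}(t2^{2k})^ae^{-ct2^{2k}}.
\end{align*}
Here the blockwise bound $\|P_kT_\Omega(t)\mathbb Pu_0\|_{L^p}$ from the proof of Proposition~\ref{prop:2.2} is used with the dispersive factor intact. Note that $\mathbb P$ commutes with $P_k$ and is bounded on frequency-localized $L^1$ pieces, because its symbol is smooth and homogeneous on the annulus. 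The high sum $\sum_{k>K}(t2^{2k})^ae^{-ct2^{2k}}$ is at most $C\exp(-c't2^{2K})$, so it vanishes as $t\to\infty$. In the first term, choosing $K\to-\infty$ makes $\sup_{k\le K}\|P_ku_0\|_{L^1}$ small. Hence we first fix $K$ with the first term below $\varepsilon$, then let $t\to\infty$ to make the second term small, which proves the limit.
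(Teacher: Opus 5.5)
Your reduction is sound up to one point. The blockwise estimate from the proof of Proposition~\ref{prop:2.2} gives
\[
\Phi(t)\left\|T_\Omega(t)u_0\right\|_{\dot W^{m,p}}\lesssim\sum_{k\in\mathbb Z}(t2^{2k})^{a}e^{-ct2^{2k}}\left\|P_ku_0\right\|_{L^1},\qquad a=\frac m2+\frac32\Bigl(1-\frac1p\Bigr)>0,
\]
and the part $k>K$ vanishes as $t\to\infty$. So everything rests on $\|P_ku_0\|_{L^1}\to0$ as $k\to-\infty$.

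Your justification of this is wrong, and the fact as you state it (for every $f\in L^1$) is false. The cancellation $\int\check\varphi=\varphi(0)=0$ gives $\|P_kf\|_{L^1}\to0$ as $k\to+\infty$, where the kernels $2^{3k}\check\varphi(2^k\cdot)$ concentrate. It says nothing as $k\to-\infty$, where they spread out. In your Taylor expansion of $2^{3k}\check\varphi(2^k(x-y))$ about $y=0$, the zeroth-order term $\bigl(\int g\bigr)2^{3k}\check\varphi(2^kx)$ has $L^1$ norm $|\int g|\,\|\check\varphi\|_{L^1}$ for every $k$. Hence $\|P_kg\|_{L^1}\lesssim 2^k\||x|g\|_{L^1}$ holds only when $\int g=0$. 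For a Gaussian $f$ one gets $\|P_kf\|_{L^1}\to|\int f|\,\|\check\varphi\|_{L^1}\neq0$.

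Relatedly, you use $\operatorname{div}u_0=0$ only to insert $\mathbb P$. Your argument would therefore prove the same limit for $e^{t(\Delta-\Omega\mathbb PJ\mathbb P)}\mathbb Pf$ with arbitrary $f\in L^1(\mathbb R^3)^3$, and that is false. For $m=0$, $p=2$, the symbol acts on the range of $P(\xi)$ as $e^{-t|\xi|^2}$ times a rotation. By Plancherel, $t^{\frac34}\|e^{t(\Delta-\Omega\mathbb PJ\mathbb P)}\mathbb Pf\|_{L^2}$ then tends to a positive multiple of $\bigl(\int e^{-2|\eta|^2}|P(\eta)\widehat f(0)|^2\,d\eta\bigr)^{\frac12}$, which is nonzero whenever $\widehat f(0)\neq0$.

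The missing ingredient is the first observation in the paper's proof: $u_0\in L^1$ and $\operatorname{div}u_0=0$ force $\widehat{u_0}(0)=\int u_0\,dx=0$. This follows from continuity of $\widehat{u_0}$ together with $\xi\cdot\widehat{u_0}(\xi)=0$. Using it, approximate $u_0$ in $L^1$ by $g\in C_c^\infty$ with $\int g=0$: replace a compactly supported approximant $g_0$ by $g_0-(\int g_0)\psi$ with $\int\psi=1$, noting $|\int g_0|\le\|g_0-u_0\|_{L^1}$. Then
\[
\left\|P_ku_0\right\|_{L^1}\le\|\check\varphi\|_{L^1}\left\|u_0-g\right\|_{L^1}+2^k\|\nabla\check\varphi\|_{L^1}\left\||x|g\right\|_{L^1}
\]
gives the needed smallness. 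The same remark would also have rescued your density attempt, since low-frequency removal converges in $L^1$ exactly on mean-zero data.

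With this repair your proof is correct and genuinely different from the paper's. The paper spends the zero mean on the kernel-difference formula $T_\Omega(t)u_0=\int(K_{\Omega,t}(\cdot-y)-K_{\Omega,t})u_0(y)\,dy$. It then rescales to $G_1$ and applies dominated convergence to $\|G_1(\cdot-t^{-\frac12}y)-G_1\|_{\dot B^{m+3(1-\frac2p)}_{p',1}}$, controlled through an inhomogeneous Besov embedding. You instead spend the zero mean on the low-frequency smallness of the Littlewood--Paley blocks of $u_0$. That needs nothing beyond the blockwise estimate already contained in the proof of Proposition~\ref{prop:2.2}.
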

\proof
The case of
	$m=0$
is proved in \citep[Theorem~3.4~(1)]{ET23}, and here we prove the general case
	$m>0$
in a similar way. For
	$\Omega\in\mathbb R$
and
	$t>0$,
let
	$K_{\Omega, t}\colon \mathbb R^3\to\mathbb R^3$
be the integral kernel of the linearized equation
	\begin{align*}
		&K_{\Omega, t}(x)
		\coloneqq \mathcal F^{-1}\left(e^{-t|\xi|^2}\left(\cos\left(\Omega\frac{\xi_3}{|\xi|}t\right)I+\sin\left(\Omega\frac{\xi_3}{|\xi|}t\right)R(\xi)\right)\right)(x)\\
		&=\frac12\mathscr G_+(\Omega t)(I+\mathcal R)G_t(x)
		+\frac12\mathscr G_-(\Omega t)(I-\mathcal R)G_t(x), 
	\end{align*}
where
	$G_t$
is the Gauss kernel given by
	$G_t(x)
	\coloneqq (4\pi t)^{-\frac32}\exp(-\frac{|x|^2}{4t}).$
Since we have
	$u_0\in L^1(\mathbb R^3)^3$
and
	$\operatorname{div}u_0=0$, 
it holds that
	\begin{align*}
		\widehat{u_0}(0)
		=\int_{\mathbb R^3}u_0(x)\, dx
		=0.
	\end{align*}
Therefore, by the integral representation of the linear propagator, we have
	\begin{align*}
		\left(T_\Omega(t)u_0\right)(x)
		=\int_{\mathbb R^3}
		\left(K_{\Omega, t}(x-y)-K_{\Omega, t}(x)\right)
		u_0(y)\, dy.
	\end{align*}
Change of the variables in the definition of the integral kernel gives
	\begin{align*}
		&K_{\Omega, t}(x)
		=\mathcal F^{-1}\left(e^{-t|\xi|^2}\left(\cos\left(\Omega\frac{\xi_3}{|\xi|}t\right)I+\sin\left(\Omega\frac{\xi_3}{|\xi|}t\right)R(\xi)\right)\right)(x)\\
		&=t^{-\frac32}\mathcal F^{-1}\left(e^{-|\xi|^2}\left(\cos\left(\Omega\frac{\xi_3}{|\xi|}t\right)I+\sin\left(\Omega\frac{\xi_3}{|\xi|}t\right)R(\xi)\right)\right)(t^{-\frac12}x)\\
		&=\frac12t^{-\frac32}\mathscr G_+(\Omega t)(I+\mathcal R)G_1(t^{-\frac12}x)
		+\frac12t^{-\frac32}\mathscr G_-(\Omega t)(I-\mathcal R)G_1(t^{-\frac12}x), 
	\end{align*}
and hence we obtain
	\begin{align*}
		&\left\|T_\Omega(t)u_0\right\|_{\dot W^{m, p}}
		\leq \int_{\mathbb R^3}
		\left\|K_{\Omega, t}(\cdot-y)-K_{\Omega, t}\right\|_{\dot W^{m, p}}
		|u_0(y)|\, dy\\
		&\leq \frac12t^{-\frac32}\sum_{\sigma\in\{\pm\}}
		\int_{\mathbb R^3}
		\left\|\mathscr G_\sigma(\Omega t)(I+\sigma\mathcal R)G_1(t^{-\frac12}(\cdot-y))-\mathscr G_\sigma(\Omega t)(I+\sigma\mathcal R)G_1(t^{-\frac12}\cdot)\right\|_{\dot W^{m, p}}
		|u_0(y)|\, dy\\
		&\leq\frac12t^{-\frac m2-\frac32(1-\frac1p)}\sum_{\sigma\in\{\pm\}}
		\int_{\mathbb R^3}
		\left\|\mathscr G_\sigma(\Omega t)(I+\sigma\mathcal R)G_1(\cdot-t^{-\frac12}y)-\mathscr G_\sigma(\Omega t)(I+\sigma\mathcal R)G_1\right\|_{\dot B^{m}_{p, 1}}
		|u_0(y)|\, dy\\
		&\lesssim t^{-\frac m2-\frac32(1-\frac1p)}(1+|\Omega|t)^{-1+\frac2p}
		\int_{\mathbb R^3}
		\left\|G_1(\cdot-t^{-\frac12}y)-G_1\right\|_{\dot B^{m+3(1-\frac2p)}_{p', 1}}
		|u_0(y)|\, dy.
	\end{align*}
By the embeddings
	$ B^{2\lfloor \frac{m+1}2\rfloor+4}_{p', \infty}(\mathbb R^3)
	\hookrightarrow B^{m+3(1-\frac2p)}_{p', 1}(\mathbb R^3)
	\hookrightarrow\dot B^{m+3(1-\frac2p)}_{p', 1}(\mathbb R^3), $
we have
	\begin{align*}
		&\left\|G_1(\cdot-t^{-\frac12}y)-G_1\right\|_{\dot B^{m+3(1-\frac2p)}_{p', 1}}
		\lesssim \left\|G_1(\cdot-t^{-\frac12}y)-G_1\right\|_{B^{2\lfloor \frac{m+1}2\rfloor+4}_{p', \infty}}\\
		&\lesssim \left\|(I-\Delta)^{\lfloor \frac{m+1}2\rfloor+2}G_1(\cdot-t^{-\frac12}y)-(I-\Delta)^{\lfloor \frac{m+1}2\rfloor+2}G_1\right\|_{L^{p'}}, 
	\end{align*}
and for each
	$y\in\mathbb R^3, $
the bound tends to zero as
	$t\to\infty.$
Thus the dominated convergence theorem yields
	\begin{align*}
		&\lim_{t\to\infty}t^{\frac m2+\frac32(1-\frac1p)}(1+|\Omega|t)^{1-\frac2p}\left\|T_\Omega(t)u_0\right\|_{\dot W^{m, p}}\\
		&\lesssim\lim_{t\to\infty}\int_{\mathbb R^3}
		\left\|G_1(\cdot-t^{-\frac12}y)-G_1\right\|_{\dot B^{m+3(1-\frac2p)}_{p', 1}}
		|u_0(y)|\, dy
		=0.
	\end{align*}
This completes the proof.
\qed

\begin{proposition}[\protect{\citep[Lemma~3.2~(2)]{ET23}}]\label{prop:2.4}
	Let
		$s\geq 0$
	and
		$p\in[2, \infty]$
	be given. Then there is a constant
		$C>0$
	such that for any vector field
		$u_0\in L^1(\mathbb R^3)^3$
	with
		$\operatorname{div}u_0=0$
	and
		$|x|u_0\in L^1(\mathbb R^3)^3$
	it holds that
		\begin{align*}
			\left\|T_\Omega(t)u_0\right\|_{\dot W^{s, p}}
			\leq Ct^{-\frac12-\frac s2-\frac32(1-\frac1p)}
			(1+|\Omega|t)^{-1+\frac2p}\left\||x|u_0\right\|_{L^1}
		\end{align*}
	for all
		$t>0$
	and
		$\Omega\in\mathbb R.$
\end{proposition}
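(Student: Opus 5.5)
We follow the same scheme as the proof of Proposition~\ref{prop:2.4'}: write $T_\Omega(t)u_0$ through the kernel $K_{\Omega,t}$, rescale, use the dispersive estimate, and exploit the vanishing mean $\widehat{u_0}(0)=\int u_0\,dx=0$. The only difference is that we now need a quantitative bound in terms of $\||x|u_0\|_{L^1}$, rather than a qualitative limit.

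Since $\int_{\mathbb R^3}u_0\,dy=0$, we have
\begin{align*}
T_\Omega(t)u_0(x)=\int_{\mathbb R^3}\bigl(K_{\Omega,t}(x-y)-K_{\Omega,t}(x)\bigr)u_0(y)\,dy .
\end{align*}
By Minkowski's inequality,
\begin{align*}
\|T_\Omega(t)u_0\|_{\dot W^{s,p}}\le\int_{\mathbb R^3}\|K_{\Omega,t}(\cdot-y)-K_{\Omega,t}\|_{\dot W^{s,p}}\,|u_0(y)|\,dy .
\end{align*}
Rescaling exactly as in the previous proof, and then applying Lemma~\ref{lem:2.1} together with the Besov-space argument of Proposition~\ref{prop:2.2} (the operators $I\pm\mathcal R$ are bounded on Besov spaces), we obtain
\begin{align*}
\|K_{\Omega,t}(\cdot-y)-K_{\Omega,t}\|_{\dot W^{s,p}}\lesssim t^{-\frac s2-\frac32(1-\frac1p)}(1+|\Omega|t)^{-1+\frac2p}\bigl\|G_1(\cdot-t^{-\frac12}y)-G_1\bigr\|_{\dot B^{s+3(1-\frac2p)}_{p',1}} .
\end{align*}

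The key new step is the Lipschitz-type bound
\begin{align*}
\bigl\|G_1(\cdot-h)-G_1\bigr\|_{\dot B^{\sigma}_{p',1}}\lesssim|h|,\qquad h\in\mathbb R^3,
\end{align*}
for $\sigma=s+3(1-\frac2p)\ge0$. To prove it, write
\begin{align*}
G_1(x-h)-G_1(x)=-\int_0^1 h\cdot\nabla G_1(x-\vartheta h)\,d\vartheta .
\end{align*}
Homogeneous Besov norms are translation invariant, so
\begin{align*}
\bigl\|G_1(\cdot-h)-G_1\bigr\|_{\dot B^\sigma_{p',1}}\le|h|\,\|\nabla G_1\|_{\dot B^\sigma_{p',1}} .
\end{align*}
It remains to check that $\|\nabla G_1\|_{\dot B^\sigma_{p',1}}<\infty$. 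For high frequencies this holds because $\nabla G_1$ is a Schwartz function, via the embedding into the inhomogeneous space used before. For low frequencies we use $\|P_k\nabla G_1\|_{L^{p'}}\lesssim 2^{k}$, which gives a convergent sum over $k<0$ since $\sigma+1>0$. Equivalently, one can rely on the same chain of embeddings $B^{N}_{p',\infty}\hookrightarrow\dot B^\sigma_{p',1}$ as in the proof of Proposition~\ref{prop:2.4'}; there the only issue is low frequencies, and these are harmless because $\sigma\ge0$.

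Taking $h=t^{-\frac12}y$ gives the factor $t^{-\frac12}|y|$. Integrating against $|u_0(y)|$ then yields the claimed bound
\begin{align*}
\|T_\Omega(t)u_0\|_{\dot W^{s,p}}\lesssim t^{-\frac12-\frac s2-\frac32(1-\frac1p)}(1+|\Omega|t)^{-1+\frac2p}\,\||x|u_0\|_{L^1}.
\end{align*}
The main point needing care is the first rescaling step, which uses the dispersive estimate at the rescaled time. After rescaling, the dispersive factor appears as $(1+|\Omega|t)^{-1+\frac2p}$ multiplied by a Besov norm of the $L^{p'}$ Gaussian difference at regularity $s+3(1-\frac2p)$. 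This is exactly the estimate already obtained in the proof of Proposition~\ref{prop:2.4'}, so it can be taken from there.
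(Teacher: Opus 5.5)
Your proof is correct and is essentially the paper's own method: the paper gives no proof here (it only cites Egashira--Takada), but your argument is the quantitative version of its proof of Proposition~\ref{prop:2.4'}, with dominated convergence replaced by the mean-value bound $\|G_1(\cdot-h)-G_1\|_{\dot B^{\sigma}_{p',1}}\le|h|\,\|\nabla G_1\|_{\dot B^{\sigma}_{p',1}}$, $\sigma=s+3(1-\frac2p)$. Only your ``equivalently'' side remark is loose: for $\sigma=0$ (i.e.\ $s=0$, $p=2$) the embedding $B^{0}_{p',1}\hookrightarrow\dot B^{0}_{p',1}$ fails, so there you need the direct bound $\|P_k\nabla G_1\|_{L^{p'}}\lesssim 2^{k}$ for $k<0$, which your main argument already provides.
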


We also recall the fractional Leibniz rule, which is used to estimate the nonlinear term.
	\begin{proposition}[Fractional Leibniz rule, \protect{\citep[Theorem~1]{GO14}}]\label{prop:2.5}
		Let 
			$s\geq 0$ 
		and 
			$1<r<\infty$
		be given. 
		Assume also that 
			$1<p_1, p_2, q_1, q_2\leq\infty$
		satisfy
			$\frac1r=\frac{1}{p_j}+\frac{1}{q_j}$
		for
			$j=1, 2$.
		Then there exists a constant
			$C>0$
		such that
			\begin{align*}
				\left\|fg\right\|_{\dot W^{s, r}}
				\leq C\left(\left\|f\right\|_{\dot W^{s, p_1}}\left\|g\right\|_{L^{q_1}}+\left\|f\right\|_{L^{q_2}}\left\|g\right\|_{\dot W^{s, p_2}}\right)
			\end{align*}
		for all
			$f\in \dot W^{s, p_1}(\mathbb R^3)\cap L^{q_2}(\mathbb R^3)$
		and
			$g\in \dot W^{s, p_2}(\mathbb R^3)\cap L^{q_1}(\mathbb R^3)$.
	\end{proposition}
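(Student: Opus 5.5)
The plan is to prove this Kato--Ponce type estimate through the Bony paraproduct decomposition built from the Littlewood--Paley projections $P_k$ introduced above. Write $S_k\coloneqq\sum_{j\leq k-3}P_j$ and split
\begin{align*}
	fg=\sum_{k}S_kf\,P_kg+\sum_k P_kf\,S_kg+\sum_{|j-k|\leq2}P_jf\,P_kg
	\eqqcolon \Pi_1(f, g)+\Pi_2(f, g)+\Pi_3(f, g).
\end{align*}
Apply $(-\Delta)^{\frac s2}$ to each piece separately; the case $s=0$ is simply H\"older's inequality, so assume $s>0$.

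\emph{Low--high and high--low pieces.} In $\Pi_1$ the $k$-th summand has Fourier support in an annulus of size $2^k$. Hence $(-\Delta)^{\frac s2}(S_kf\,P_kg)=S_kf\,\widetilde P_k\big((-\Delta)^{\frac s2}P_kg\big)$ up to a multiplier $m_s(2^{-k}\xi)$ that is smooth and compactly supported away from the origin. After redistributing the symbol, $(-\Delta)^{\frac s2}\Pi_1(f, g)$ becomes a Coifman--Meyer bilinear multiplier operator applied to $(f, (-\Delta)^{\frac s2}g)$. Its symbol $\sum_k\psi(2^{-k}\eta)\varphi(2^{-k}\zeta)\,|\eta+\zeta|^s|\zeta|^{-s}$ satisfies the Marcinkiewicz--H\"ormander conditions $|\partial^\alpha_\eta\partial^\beta_\zeta\sigma|\lesssim(|\eta|+|\zeta|)^{-|\alpha|-|\beta|}$. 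The Coifman--Meyer theorem then gives the bound by $\|f\|_{L^{q_2}}\|g\|_{\dot W^{s, p_2}}$ whenever $1<q_2, p_2\leq\infty$, $1<r<\infty$. The case of one exponent equal to $\infty$ follows because the paraproduct with the $L^\infty$ function in the low-frequency slot can be controlled by the square function in the other factor: $|S_kf|\leq Mf\leq\|f\|_{L^\infty}$, and then the Littlewood--Paley inequality in $L^r$ applies. The piece $\Pi_2$ is symmetric and yields $\|f\|_{\dot W^{s, p_1}}\|g\|_{L^{q_1}}$.

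\emph{High--high piece.} This is the main obstacle. Here the output frequency of $P_jf\,P_kg$ with $j\approx k$ can be anywhere in $\{|\xi|\lesssim2^k\}$, so $(-\Delta)^{\frac s2}$ cannot simply be moved onto one factor. I would write
\begin{align*}
	P_l(-\Delta)^{\frac s2}\Pi_3=\sum_{k\geq l-4}2^{(l-k)s}\,P^{(s)}_l\big(\widetilde P_kf\,2^{ks}P_kg\big),
\end{align*}
where $P^{(s)}_l$ has symbol $|2^{-l}\xi|^s\varphi(2^{-l}\xi)$. The factor $2^{(l-k)s}$ with $s>0$ gives geometric decay in $k-l$. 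Summing the resulting square functions via the vector-valued Fefferman--Stein inequality and H\"older then bounds this term by $\|f\|_{L^{q_2}}\|g\|_{\dot W^{s, p_2}}$ (or symmetrically by the other product), provided $1<r<\infty$.

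The delicate endpoint is when $q_2=\infty$ or $q_1=\infty$. In that case the square function of $f$ is not controlled by $\|f\|_{L^\infty}$, and one needs instead the $BMO$ Carleson-measure argument. This is exactly the refinement supplied in \citep{GO14}. I would first try to handle it by estimating $\sup_k|\widetilde P_kf|\lesssim\|f\|_{L^\infty}$ and keeping the full square function on the $g$ side, which the geometric factor $2^{(l-k)s}$ should permit. If that fails, I would fall back on the cited result, since the proposition is quoted from \citep[Theorem~1]{GO14}.
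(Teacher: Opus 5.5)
The paper gives no argument for this proposition; it is quoted from \citep[Theorem~1]{GO14}. Your paraproduct proof is therefore a genuinely different, self-contained route, and for $1<r<\infty$ it is correct. It makes visible that $s>0$ is used only to sum the factor $2^{(l-k)s}$ in $\Pi_3$, and that $1<r<\infty$ enters through the square-function and Fefferman--Stein steps. The citation, by contrast, costs nothing and also reaches $\frac12<r\leq1$ (for $s$ large enough).

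Your placement of the endpoint difficulty is off, however.
\begin{itemize}
\item In $\Pi_3$ no $BMO$ argument is needed for any $q_2\in(1,\infty]$. The fix you propose at the end already works when $p_2<\infty$: bound $|P^{(s)}_lh|\lesssim Mh$, apply Young's inequality in $l$ and Fefferman--Stein, then pair $\sup_k|\widetilde P_kf|\lesssim Mf$ against the square function of $2^{ks}P_kg$. If $p_2=\infty$, swap roles, using $\sup_k|2^{ks}P_kg|\lesssim\|g\|_{\dot W^{s,\infty}}$ and the square function of $f\in L^{q_2}=L^r$.
\item Likewise $\Pi_1$ is elementary whenever $p_2<\infty$, including $q_2=\infty$. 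Write it exactly as $(-\Delta)^{\frac s2}(S_kf\,P_kg)=2^{ks}\widetilde P^{(s)}_k(S_kf\,P_kg)$, with $\widetilde P^{(s)}_k$ a Littlewood--Paley-type operator adapted to the annular spectrum. Note that $(-\Delta)^{\frac s2}$ does not pass through $S_kf$ as your first formula suggests.
\end{itemize}

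The one genuinely non-elementary case is when the \emph{differentiated} factor is only bounded: $p_2=\infty$ in $\Pi_1$, or $p_1=\infty$ in $\Pi_2$. There the square function of $(-\Delta)^{\frac s2}g$ is not controlled by $\|g\|_{\dot W^{s,\infty}}$, and one needs the Carleson-measure bound for paraproducts whose high-frequency factor lies in $BMO$. Your explicit justification ($|S_kf|\leq Mf$ plus Littlewood--Paley) does not reach this case. It is carried solely by the $L^\infty$-endpoint form of the Coifman--Meyer theorem, which you should flag as the load-bearing ingredient.

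This corner is not vacuous here: Proposition~\ref{prop:16} uses precisely $\|u\otimes u\|_{\dot H^{m+1}}\lesssim\|u\|_{L^2}\|u\|_{\dot W^{m+1,\infty}}$.
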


\section{Proof of Theorem~\ref{thm:4}: smoothing effect}
This section is devoted to the proof of smoothing effect of the system~\eqref{eq:1} and the derivation of the energy equality
	\begin{align*}
		\left\|u(t)\right\|_{L^2}^2
		+2\int_0^t\left\|\nabla u(\tau)\right\|_{L^2}^2\, d\tau
		=\left\|u_0\right\|_{L^2}^2
	\end{align*}
for
	$t>0.$
In the following arguments, we assume that the exponents
	$s, q, \theta$
always satisfy the conditions \eqref{eq:5} and \eqref{eq:6}, and the divergence-free initial velocity field
	$u_0$
satisfies
	$u_0\in H^s(\mathbb R^3)^3$
(i.e., 
	$u_0$
is an element of the inhomogeneous Sobolev space)
and
	$|\Omega|^{-\frac12(s-\frac12)}\left\|u_0\right\|_{\dot H^s}
	\leq C_*.$
Let 
	$u\in C([0, \infty); \dot H^s(\mathbb R^3)^3)\cap L^\theta([0, \infty); \dot W^{s, q}(\mathbb R^3)^3)$
denote the solution of the equation~\eqref{eq:4} obtained in Theorem~\ref{thm:1}.
We first observe the smoothing effect in terms of the space-time norms of the soluiton.
\begin{lemma}\label{lem:5}
	For any
		$m\geq0$
	and
		$\varepsilon>0, $
	the solution
		$u$
	satisfies
		$u \in L^\theta([\varepsilon, \infty); \dot W^{s+m, q}(\mathbb R^3)^3)$
	and
		$\left\|u\right\|_{L^\theta([\varepsilon, \infty); \dot W^{s+m, q})}
		=O(\varepsilon^{-\frac m2})$
	as
		$\varepsilon\to0.$
	Moreover, if
		$u_0\in \dot H^{s+m}(\mathbb R^3)^3$
	for
		$m\geq 0, $
	then we have
		$u \in L^\theta([0, \infty); \dot W^{s+m, q}(\mathbb R^3)^3).$
\end{lemma}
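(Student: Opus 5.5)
We derive the smoothing estimate directly from the Duhamel formula~\eqref{eq:4}, using the Strichartz-type space-time estimates behind Theorem~\ref{thm:1}. The basic tool is the linear estimate from \cite{IT13, KLT14}:
\begin{align*}
\left\|T_\Omega(\cdot)f\right\|_{L^\theta(0,\infty; \dot W^{s,q})}\lesssim |\Omega|^{-\frac1\theta+\frac32(\frac12-\frac1q)}\left\|f\right\|_{\dot H^s}.
\end{align*}
We pair it with the inhomogeneous bound used in their contraction argument. That bound controls
\begin{align*}
\Big\|\int_0^t T_\Omega(t-\tau)\mathbb P\operatorname{div}(u\otimes v)\,d\tau\Big\|_{L^\theta \dot W^{s,q}}
\end{align*}
by $|\Omega|^{-\frac1\theta+\frac32(\frac12-\frac1q)}\|u\|_{L^\theta\dot W^{s,q}}\|v\|_{L^\theta\dot W^{s,q}}$. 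The derivative $(-\Delta)^{m/2}$ commutes with $T_\Omega$ and $\mathbb P$. Hence the same bound holds with $s$ replaced by $s+m$ on the left, once the fractional Leibniz rule (Proposition~\ref{prop:2.5}) is used to distribute $m$ extra derivatives onto one factor:
\begin{align*}
\|u\otimes u\|_{\dot W^{s+m,\cdot}}\lesssim \|u\|_{\dot W^{s+m,q}}\|u\|_{\dot W^{s,q}}\text{-type norms}.
\end{align*}

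\textbf{The case $u_0\in\dot H^{s+m}$.} The a priori estimate reads
\begin{align*}
\|u\|_{L^\theta_T\dot W^{s+m,q}}\le C\|u_0\|_{\dot H^{s+m}}+C|\Omega|^{-\frac1\theta+\frac32(\frac12-\frac1q)}\|u\|_{L^\theta\dot W^{s,q}}\|u\|_{L^\theta_T\dot W^{s+m,q}}.
\end{align*}
By Theorem~\ref{thm:1} together with the smallness $|\Omega|^{-\frac12(s-\frac12)}\|u_0\|_{\dot H^s}\le C_*$, the coefficient of the last term is at most $1/2$, so that term is absorbed into the left side. This step needs care because it presupposes the finiteness of the left side. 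To justify it, we redo the fixed-point iteration of Theorem~\ref{thm:1} in the intersection space $L^\theta\dot W^{s,q}\cap L^\theta\dot W^{s+m,q}$. The iterates stay bounded there and converge in the original space, so by uniqueness the limit is $u$. Lower semicontinuity of the norm then gives $u\in L^\theta([0,\infty);\dot W^{s+m,q})$.

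\textbf{The case of general data.} Here we first gain regularity at positive times and then restart, so the key step is an instantaneous smoothing in $\dot H^{s+m}$ with a quantitative bound. I would handle this with a time-weighted estimate, proving
\begin{align*}
\sup_t t^{m/2}\|u(t)\|_{\dot H^{s+m}}<\infty \quad\text{and}\quad \|t^{m/2}u\|_{L^\theta\dot W^{s+m,q}}<\infty.
\end{align*}
In the Duhamel integral we split at $\tau=t/2$. For $\tau<t/2$ we use the heat smoothing $e^{(t-\tau)\Delta}$, which gives the factor $(t-\tau)^{-m/2}\sim t^{-m/2}$. For $\tau>t/2$ we put the derivatives on the nonlinearity and use the weight $\tau^{m/2}\sim t^{m/2}$, again closing by smallness and the iteration argument. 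Integer $m$ can be reached by induction in steps of size at most $1/2$ if needed, with general $m$ obtained by interpolation.

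Once this is in hand, set $\varepsilon>0$ and restart the problem at time $\varepsilon/2$ with data $u(\varepsilon/2)\in\dot H^{s+m}$, whose norm is $O(\varepsilon^{-m/2})$. The smallness in $\dot H^s$ is preserved along the flow, since $\|u(t)\|_{\dot H^s}$ stays controlled. The first part then gives
\begin{align*}
\|u\|_{L^\theta([\varepsilon,\infty);\dot W^{s+m,q})}\lesssim \|u(\varepsilon/2)\|_{\dot H^{s+m}}=O(\varepsilon^{-m/2}).
\end{align*}

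\textbf{Main obstacle.} The hardest step is the weighted estimate on the piece $\tau<t/2$. There the Strichartz gain in $L^\theta_t$ must be combined with a pointwise heat-smoothing factor. I would first try to bound this piece using the Besov-level dispersive estimate (Lemma~\ref{lem:2.1}) combined with the factor $e^{-c(t-\tau)2^{2k}}$, as in the proof of Proposition~\ref{prop:2.2}.
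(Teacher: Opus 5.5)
Your argument has a genuine gap where you close the estimate at regularity $s+m$ by absorption. The bilinear bound at that level reads
\begin{align*}
\Big\|\int_0^tT_\Omega(t-\tau)\mathbb P\operatorname{div}(u\otimes u)\,d\tau\Big\|_{L^\theta\dot W^{s+m,q}}\le C_m|\Omega|^{\frac1\theta-\frac12+\frac3{2q}-\frac s2}\|u\|_{L^\theta\dot W^{s,q}}\|u\|_{L^\theta\dot W^{s+m,q}}.
\end{align*}
The power of $|\Omega|$ you wrote is the linear Strichartz one; only with the power above does the coefficient become $\lesssim C_m|\Omega|^{-\frac12(s-\frac12)}\|u_0\|_{\dot H^s}$. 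The constant $C_m$ comes from the fractional Leibniz rule at order $s+m$, so it depends on $m$. The smallness $|\Omega|^{-\frac12(s-\frac12)}\|u_0\|_{\dot H^s}\le C_*$ is fixed once and for all by Theorem~\ref{thm:1}. Nothing therefore guarantees that this coefficient is below $\frac12$, and for large $m$ neither the absorption nor your justification through the iterates is available. Indeed, the recursion $a_{n+1}\le A+\rho_m a_n$ for $a_n=\|u^{(n)}\|_{L^\theta\dot W^{s+m,q}}$ gives no uniform bound once $\rho_m\ge1$. The same defect sits in the $\tau>t/2$ part of your weighted estimate, which you also close ``by smallness''. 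Inducting in steps of size $\le\frac12$ does not help as long as all extra derivatives are placed on the nonlinearity, since the right-hand side then sits at the same regularity level as the left.

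The missing idea, which is what the paper does, is to let the propagator, not the nonlinearity, carry a small fraction $\alpha=m/N$ of the extra derivatives. This is done in $\int_{t/2}^t$, or in $\int_0^t$ when $u_0\in\dot H^{s+m}$. By Proposition~\ref{prop:2.2}, heat smoothing and Proposition~\ref{prop:2.5},
\begin{align*}
\left\|T_\Omega(t-\tau)\mathbb P\operatorname{div}(u\otimes u)\right\|_{\dot W^{s+k\alpha,q}}\lesssim(1+|\Omega|(t-\tau))^{-1+\frac2q}(t-\tau)^{-\frac12-\frac\alpha2-\frac32(\frac1q-\frac s3)}\|u\|_{\dot W^{s+(k-1)\alpha,q}}\|u\|_{\dot W^{s,q}}.
\end{align*}
The conditions $\frac12-\frac32(\frac1q-\frac s3)-(1-\frac2q)<\frac1\theta<\frac12-\frac32(\frac1q-\frac s3)$ are strict, so the kernel stays in $L^{\theta/(\theta-1)}(0,\infty)$ for small $\alpha$. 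Young's inequality in time then bounds level $s+k\alpha$ on $[\varepsilon,\infty)$ by $\|u\|_{L^\theta\dot W^{s,q}}$ times level $s+(k-1)\alpha$ on $[\varepsilon/2,\infty)$, which is finite by induction. This needs no absorption, no smallness beyond Theorem~\ref{thm:1}, and raises no a priori finiteness issue.

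The piece $\tau<t/2$ that you single out as the main obstacle is routine: $(t-\tau)^{-m/2}\lesssim\varepsilon^{-m/2}$ factors out, and what remains is the $m=0$ kernel. The restart through $\dot H^{s+m}$ is superfluous, because the weighted bound $\|t^{m/2}u\|_{L^\theta\dot W^{s+m,q}}<\infty$ already gives $O(\varepsilon^{-m/2})$. Your claimed persistence of the $\dot H^s$ smallness is unjustified: at best $\sup_t\|u(t)\|_{\dot H^s}\le C\|u_0\|_{\dot H^s}$ with some $C>1$. What actually enters is $\|u\|_{L^\theta([\varepsilon/2,\infty);\dot W^{s,q})}\le\|u\|_{L^\theta([0,\infty);\dot W^{s,q})}$.
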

\proof
We only have to deal with the case of
	$m>0.$
Since 
	$u$
is a solution of the integral equation~\eqref{eq:4}, 
we may write
	\begin{align*}
		u(t)
		=T_\Omega(t)u_0
		-\int_0^\frac t2T_\Omega(t-\tau)\mathbb P\operatorname{div}(u(\tau)\otimes u(\tau))\, d\tau
		-\int_\frac t2^tT_\Omega(t-\tau)\mathbb P\operatorname{div}(u(\tau)\otimes u(\tau))\, d\tau
	\end{align*}
and deal with the estimates of each term on the right-hand side.

For any
	$t\geq \varepsilon, $
due to the smoothing property of the heat kernel, we have
	\begin{align*}
		\left\|T_{\Omega}(t)u_0\right\|_{\dot W^{s+m, q}}
		\lesssim t^{-\frac m2}\left\|T_{2\Omega}\left(t/2\right)u_0\right\|_{\dot W^{s, q}}, 
	\end{align*}
and therefore
	\begin{align*}
		\left\|T_{\Omega}(t)u_0\right\|_{L^\theta([\varepsilon, \infty); \dot W^{s+m, q})}
		\lesssim \varepsilon^{-\frac m2}\left\|T_{2\Omega}(t/2)u_0\right\|_{L^\theta([0, \infty); \dot W^{s, q})}<\infty.
	\end{align*}
Here we also obtain the order estimate
	$\left\|T_{\Omega}(t)u_0\right\|_{L^\theta([\varepsilon, \infty); \dot W^{s+m, q})}
	=O(\varepsilon^{-\frac m2})$
as
	$\varepsilon\to0.$
For the first half of the Duhamel term, setting
	$\frac1r\coloneqq\frac2q-\frac s3$
and using Proposition~\ref{prop:2.2}, the smoothing property similarly gives
	\begin{align*}
		&\left\|\int_0^\frac t2T_\Omega(t-\tau)\mathbb P\operatorname{div}(u(\tau)\otimes u(\tau))\, d\tau\right\|_{\dot W^{s+m, q}}
		\leq \int_0^\frac t2\left\|T_\Omega(t-\tau)\mathbb P\operatorname{div}(u(\tau)\otimes u(\tau))\right\|_{\dot W^{s+m, q}}\, d\tau\\
		&\lesssim \int_0^\frac t2(1+|\Omega|(t-\tau))^{-1+\frac 2q}(t-\tau)^{-\frac m2-\frac12-\frac32(1-\frac2q)}\left\|e^{\frac12(t-\tau)\Delta}(u(\tau)\otimes u(\tau))\right\|_{\dot W^{s, q'}}\, d\tau\\
		&\lesssim \int_0^\frac t2(1+|\Omega|(t-\tau))^{-1+\frac 2q}(t-\tau)^{-\frac m2-\frac12-\frac32(\frac1q-\frac s3)}\left\|u(\tau)\otimes u(\tau)\right\|_{\dot W^{s, r}}\, d\tau\\
		&\lesssim \int_0^\frac t2(1+|\Omega|(t-\tau))^{-1+\frac 2q}(t-\tau)^{-\frac m2-\frac12-\frac32(\frac1q-\frac s3)}
		\left\|u(\tau)\right\|_{\dot W^{s, q}}^2\, d\tau\\
		&\lesssim \varepsilon^{-\frac m2}\int_0^\frac t2(1+|\Omega|(t-\tau))^{-1+\frac 2q}(t-\tau)^{-\frac12-\frac32(\frac1q-\frac s3)}
		\left\|u(\tau)\right\|_{\dot W^{s, q}}^2\, d\tau
	\end{align*}
Since the assumption~\eqref{eq:5} and \eqref{eq:6} ensures the condition
	\begin{align*}
		\frac12-\frac32\left(\frac1q-\frac s3\right)
		-\left(1-\frac2q\right)
		<\frac1\theta
		<\frac12-\frac32\left(\frac1q-\frac s3\right), 
	\end{align*}
Young's inequality yields
	\begin{align*}
		&\left\|\int_0^\frac t2T_\Omega(t-\tau)\mathbb P\operatorname{div}(u(\tau)\otimes u(\tau))\, d\tau\right\|_{L^\theta([\varepsilon, \infty); \dot W^{s+m, q})}\\
		&\lesssim \varepsilon^{-\frac m2}|\Omega|^{\frac12+\frac32(\frac1q-\frac s3)-1+\frac 1\theta}\left\|u\right\|_{L^\theta([0, \infty); \dot W^{s, q})}^2<\infty.
	\end{align*}
Note that the order of the bound is again
	$O(\varepsilon^{-\frac m2})$
as
	$\varepsilon\to0.$
	
We now deal with the second half of the Duhamel term. The proof is based on the inductive argument employed in \cite{Yos26}. Let 
	$N\in\mathbb N $
be a sufficiently large number such that
	$\alpha\coloneqq \frac mN$
satisfies
	\begin{align*}
		\frac12-\frac32\left(\frac1q-\frac s3\right)-\left(1-\frac2q\right)-\frac\alpha2
		<\frac1\theta
		<\frac12-\frac32\left(\frac1q-\frac s3\right)-\frac\alpha2.
	\end{align*}
Here we prove that
	\begin{align*}
		&\left\|\int_\frac t2^tT_\Omega(t-\tau)\mathbb P\operatorname{div}(u(\tau)\otimes u(\tau))\, d\tau\right\|_{L^\theta([\varepsilon, \infty); \dot W^{s+k\alpha, q})}
	\end{align*}
is finite for
	$k\in \{0, 1, \dots, N\}$
and its order is
	$o(\varepsilon^{-\frac m2})$
as
	$\varepsilon\to 0$
by induction of
	$k.$
The base case
	$k=0$
follows from Theorem~\ref{thm:1}. For
	$k\geq1, $
we have
	\begin{align*}
		&\left\|\int_\frac t2^tT_\Omega(t-\tau)\mathbb P\operatorname{div}(u(\tau)\otimes u(\tau))\, d\tau\right\|_{\dot W^{s+k\alpha, q}}\\
		&\leq\int_\frac t2^t\left\|T_\Omega(t-\tau)\mathbb P\operatorname{div}(u(\tau)\otimes u(\tau))\right\|_{\dot W^{s+k\alpha, q}}\, d\tau\\
		&\lesssim \int_\frac t2^t(1+|\Omega|(t-\tau))^{-1+\frac2q}(t-\tau)^{-\frac12-\frac\alpha2-\frac32(\frac1q-\frac s3)}\left\|u(\tau)\otimes u(\tau)\right\|_{\dot W^{s+(k-1)\alpha, r}}\, d\tau\\
		&\lesssim \int_\frac t2^t(1+|\Omega|(t-\tau))^{-1+\frac2q}(t-\tau)^{-\frac12-\frac\alpha2-\frac32(\frac1q-\frac s3)}\left\|u(\tau)\right\|_{\dot W^{s+(k-1)\alpha, q}}\left\|u(\tau)\right\|_{\dot W^{s, q}}\, d\tau.
	\end{align*}
Taking the
	$L^\theta$-norm
of the estimate on
	$[\varepsilon, \infty)$
and applying Young's inequality, we subsequently obtain
	\begin{align*}
		&\left\|\int_\frac t2^tT_\Omega(t-\tau)\mathbb P\operatorname{div}(u(\tau)\otimes u(\tau))\, d\tau\right\|_{L^\theta([\varepsilon, \infty); \dot W^{s+k\alpha, q})}\\
		&\lesssim |\Omega|^{\frac12+\frac\alpha2+\frac32(\frac1q-\frac s3)-1+\frac1\theta}
		\left\|u\right\|_{L^\theta([0, \infty); \dot W^{s, q})}
		\left\|u\right\|_{L^\theta([\frac\varepsilon2, \infty); \dot W^{s+(k-1)\alpha, q})}.
	\end{align*}
By the hypothesis of the induction, the right-hand side is finite and its order is
	$o(\varepsilon^{-\frac m2})$
as
	$\varepsilon\to0.$
Consequently, by induction
for any
	$\varepsilon>0$
and
	$m\geq 0, $
we obtain
	$u\in L^\theta([\varepsilon, \infty); \dot W^{s+m, q}(\mathbb R^3)^3), $
and
	\begin{align*}
		\left\|u\right\|_{L^\theta([\varepsilon, \infty); \dot W^{s+m, q})}
		=O(\varepsilon^{-\frac{m}2})+o(\varepsilon^{-\frac{m}2})
		=O(\varepsilon^{-\frac{m}2})
		\quad\text{as $\varepsilon\to0$}.
	\end{align*}
This proves the first half of the statement.
If
	$u_0\in \dot H^{s}(\mathbb R^3)^3\cap\dot H^{s+m}(\mathbb R^3)^3, $
we can estimate the linear term as
	\begin{align*}
		\left\|T_\Omega(t)u_0\right\|_{L^\theta([0, \infty); \dot W^{s+m, q})}
		\lesssim |\Omega|^{-\frac1\theta+\frac32(\frac12-\frac1q)}\left\|u_0\right\|_{\dot H^{s+m}}<\infty, 
	\end{align*}
and it is possible to replace the estimates of
	$\displaystyle\int_\frac t2^t$
with those of
	$\displaystyle\int_0^t$
in the inductive process above to obtain
	$u\in L^\theta([0, \infty); \dot W^{s+m, q}(\mathbb R^3)^3).$
This completes the proof.
\qed
\begin{lemma}\label{lem:6}
	For any
		$m\geq0, $
	the solution
		$u$
	satisfies
		$u\in C((0, \infty); \dot H^m(\mathbb R^3)^3)$
	and
		$\left\|u(t)\right\|_{\dot H^m}
		=O(t^{-\frac m2})$
	as
		$t\to 0.$
\end{lemma}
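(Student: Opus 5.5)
We prove Lemma~\ref{lem:6} by writing $u(t)$ through the Duhamel formula~\eqref{eq:4} and estimating each term in $\dot H^m$ pointwise in $t$. The space-time bounds of Lemma~\ref{lem:5} control the nonlinear term, and the energy-type $L^2$ bounds come from Proposition~\ref{prop:2.2} with $p=2$. It suffices to treat $m>s$, since $u\in C([0,\infty);H^s)$ already covers $m\le s$. For $m\le s$ the bound $\|u(t)\|_{\dot H^m}\lesssim\|u(t)\|_{H^s}$ gives $O(1)$, which is better than $O(t^{-\frac m2})$.

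\textbf{Linear term.} For $t>0$ the heat factor gives
\[
\|T_\Omega(t)u_0\|_{\dot H^m}\lesssim t^{-\frac{m-s}2}\|u_0\|_{\dot H^s}\le C t^{-\frac m2}\quad(t\le1).
\]
Continuity in $t>0$ follows from strong continuity of the semigroup on $\dot H^s$ combined with smoothing: write $T_\Omega(t)=T_\Omega(t-\delta)T_\Omega(\delta)$ and use dominated convergence on the Fourier side.

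\textbf{Nonlinear term.} Fix $t_0>0$ and write the Duhamel integral over $[0,t]$ as $\int_0^{t/2}+\int_{t/2}^t$.

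For $\int_0^{t/2}$, use Proposition~\ref{prop:2.2} with target $\dot W^{m,2}$. Put $\operatorname{div}$ on the kernel and estimate $u\otimes u$ in $\dot W^{s,r}$ with $\frac1r=\frac2q-\frac s3$, as in the proof of Lemma~\ref{lem:5}. Since $t-\tau\sim t$, this gives a bound
\[
t^{-\frac{m-s}2-\frac12-\frac32(\frac1r-\frac12)}\int_0^{t/2}\|u\|_{\dot W^{s,q}}^2\,d\tau .
\]
Hölder's inequality in time then yields $\|u\|_{L^\theta\dot W^{s,q}}^2\, t^{1-\frac2\theta}$. One checks, via~\eqref{eq:5}--\eqref{eq:6} and the scaling invariance of the $L^\theta\dot W^{s,q}$ norm, that the total power of $t$ is exactly $-\frac{m-s}2$ plus the scaling-critical exponent. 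Near $t=0$ this is at worst $t^{-\frac m2}$. If the Hölder exponent is problematic, I would instead absorb $(1+|\Omega|t)$ factors.

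For $\int_{t/2}^t$, use Lemma~\ref{lem:5} with $\varepsilon=t/2$ to control $\|u\|_{L^\theta([t/2,\infty);\dot W^{s+m',q})}=O(t^{-\frac{m'}2})$. Distribute derivatives via Proposition~\ref{prop:2.5}, placing all but a fractional amount $\alpha<1$ of the derivatives on $u\otimes u$. The kernel singularity $(t-\tau)^{-\frac12-\frac\alpha2-\frac32(\frac1r-\frac12)}$ is then integrable after Hölder's inequality in $\tau$ against $L^{\theta/2}$. Summing the exponents reproduces $O(t^{-\frac m2})$.

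\textbf{Continuity in $t$.} Apply the same splitting to $u(t+h)-u(t)$, writing $u(t+h)=T_\Omega(h)u(t)-\int_t^{t+h}\cdots$. The first piece converges by strong continuity of $T_\Omega$ on $\dot H^m$, applied to $u(t)\in\dot H^m$. The second is bounded by a small power of $h$ times the $L^\theta([t/2,\infty))$ norms above. Continuity from the left is handled similarly using the bound on $[t-h,t]$.

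\textbf{Main obstacle.} The hard step is the exponent bookkeeping: showing that the time integrability in the $\int_{t/2}^t$ piece holds, i.e., that $\frac1\theta$ lies in the admissible window. This is the same condition used in Lemma~\ref{lem:5}, adapted to an $L^2$ target rather than $L^q$. If a single step does not close, I would run the induction in increments of $\alpha$, as in Lemma~\ref{lem:5}.
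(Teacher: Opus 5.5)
Your overall scheme matches the paper's: the Duhamel formula split at $t/2$, Hölder in time, and Lemma~\ref{lem:5} with $\varepsilon=t/2$ for the second half. The gap is your reduction to $m>s$, which is circular.

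At this point the only information on $u$ comes from Theorem~\ref{thm:1}, namely $u\in C([0,\infty);\dot H^s)\cap L^\theta([0,\infty);\dot W^{s,q})$. The inhomogeneous property $u\in C([0,\infty);H^s)$ is part of the \emph{conclusion} of Theorem~\ref{thm:4}. In the paper, $L^2$-continuity is proved after Lemma~\ref{lem:6} and by means of it. In particular, $u(t)\in L^2$ with $\|u(t)\|_{L^2}=O(1)$ is exactly the case $m=0$ of the lemma, and this is the case that later feeds the energy equality and the $L^2$ decay bounds. Your interpolation for $0<m<s$ needs it as well.

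Your estimates for $m>s$ also do not extend to $m<s$ as written. The bound $\|T_\Omega(t)u_0\|_{\dot H^m}\lesssim t^{-\frac{m-s}2}\|u_0\|_{\dot H^s}$ fails for $m<s$, because the symbol $|\xi|^{m-s}e^{-t|\xi|^2}$ is unbounded near $\xi=0$. Your $\int_{t/2}^t$ bound puts $m-\alpha$ derivatives on $u$, and Lemma~\ref{lem:5} covers that only when $m-\alpha\ge s$. So as proposed, the range $0\le m<s$ is unproved.

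The fix, which is what the paper does, treats all $m\ge0$ at once:
\begin{itemize}
\item Bound the linear term by $t^{-\frac m2}\|u_0\|_{L^2}$, which is available since $u_0\in H^s$.
\item Your $\int_0^{t/2}$ estimate is already valid for every $m\ge0$. Its power $-\frac m2+\frac s2+2(\frac58-\frac3{2q}+\frac s4-\frac1\theta)$ exceeds $-\frac m2$ by \eqref{eq:6}.
\item In $\int_{t/2}^t$, put all $s+m$ derivatives on the product, $\|u\otimes u\|_{\dot W^{s+m,r}}\lesssim\|u\|_{\dot W^{s,q}}\|u\|_{\dot W^{s+m,q}}$. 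The kernel then carries only $1-s$ derivatives plus the $L^r\to L^2$ gain. Its singularity $(t-\tau)^{-\frac{1-s}2-\frac32(\frac2q-\frac s3-\frac12)}$ lies in $L^{(\theta/2)'}$ near $0$ by \eqref{eq:6}. So you need neither $\alpha$ nor the restriction $m>s$.
\end{itemize}

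For continuity, your semigroup identity $u(t+h)=T_\Omega(h)u(t)-\int_t^{t+h}\cdots$ is a cleaner route than the paper's telescoping decomposition. For left continuity, though, the term $(T_\Omega(h)-I)u(t-h)$ has a moving argument, so strong continuity at a fixed vector is not enough. Use $\|(T_\Omega(h)-I)f\|_{\dot H^m}\lesssim h\|f\|_{\dot H^{m+2}}+|\Omega|h\|f\|_{\dot H^m}$ together with $\sup_{\tau\in[t/2,t]}\|u(\tau)\|_{\dot H^m\cap\dot H^{m+2}}<\infty$. Your pointwise bounds give the latter, since $\|u\|_{L^\theta([\tau/2,\infty);\dot W^{s+m+2,q})}\le\|u\|_{L^\theta([t/4,\infty);\dot W^{s+m+2,q})}$ for $\tau\ge t/2$.
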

\proof
We first show that
	$u(t)\in \dot H^{m}(\mathbb R^3)^3$
for any
	$t>0$
and
	$\left\|u(t)\right\|_{\dot H^m}
	=O(t^{-\frac m2})$
as
	$t\to0.$
We may write
	\begin{align*}
		\left\|u(t)\right\|_{\dot H^m}
		&\leq \left\|T_\Omega(t)u_0\right\|_{\dot H^m}
		+\int_0^\frac t2\left\|T_\Omega(t-\tau)\mathbb P\operatorname{div}(u(\tau)\otimes u(\tau))\right\|_{\dot H^m}\, d\tau\\
		&+\int_\frac t2^t\left\|T_\Omega(t-\tau)\mathbb P\operatorname{div}(u(\tau)\otimes u(\tau))\right\|_{\dot H^m}\, d\tau
	\end{align*}
and deal with the estimates of each term on the right-hand side.

For the first term, smoothing effect of the heat kernel gives
	\begin{align*}
		\left\|T_\Omega(t)u_0\right\|_{\dot H^m}
		\lesssim t^{-\frac m2}\left\|u_0\right\|_{L^2}<\infty.
	\end{align*}	
	
For the second term, using the smoothing effect of the heat kernel and Proposition~\ref{prop:2.5}, we have
	\begin{align*}
		&\int_0^\frac t2\left\|T_\Omega(t-\tau)\mathbb P\operatorname{div}(u(\tau)\otimes u(\tau))\right\|_{\dot H^m}\, d\tau\\
		&\lesssim \int_0^\frac t2(t-\tau)^{-\frac m2-\frac{1-s}2-\frac32(\frac2q-\frac s3-\frac12)}\left\|u(\tau)\otimes u(\tau)\right\|_{\dot W^{s, r}}\, d\tau\\
		&\lesssim t^{-\frac m2-\frac{1-s}2-\frac32(\frac2q-\frac s3-\frac12)}\int_0^\frac t2\left\|u(\tau)\right\|_{\dot W^{s, q}}^2\, d\tau
		\\
		&\lesssim t^{-\frac m2+1-\frac2\theta-\frac{1-s}2-\frac32(\frac2q-\frac s3-\frac12)}\left\|u\right\|_{L^\theta([0, \infty); \dot W^{s, q})}^2<\infty.
	\end{align*}
Note that we have
	$t^{1-\frac2\theta-\frac{1-s}2-\frac32(\frac2q-\frac s3-\frac12)}
	=t^{\frac s2}\cdot t^{2(-\frac1\theta+\frac58-\frac3{2q}+\frac s4)}, $
and by \eqref{eq:6} the bound is
	$o(t^{-\frac m2})$
as
	$t\to0.$

For the last term, we similarly have
	\begin{align*}
		&\int_\frac t2^t\left\|T_\Omega(t-\tau)\mathbb P\operatorname{div}(u(\tau)\otimes u(\tau))\right\|_{\dot H^m}\, d\tau\\
		&\lesssim \int_\frac t2^t(t-\tau)^{-\frac{1-s}2-\frac32(\frac2q-\frac s3-\frac12)}\left\|u(\tau)\otimes u(\tau)\right\|_{\dot W^{s+m, r}}\, d\tau\\
		&\lesssim \int_\frac t2^t(t-\tau)^{-\frac{1-s}2-\frac32(\frac2q-\frac s3-\frac12)}\left\|u(\tau)\right\|_{\dot W^{s, q}}\left\|u(\tau)\right\|_{\dot W^{s+m, q}}\, d\tau\\
		&\lesssim t^{1-\frac2\theta-\frac{1-s}2-\frac32(\frac2q-\frac s3-\frac12)}
		\left\|u\right\|_{L^\theta([0, \infty); \dot W^{s, q})}
		\left\|u\right\|_{L^\theta([\frac t2, \infty); \dot W^{s+m, q})}
		<\infty.
	\end{align*}
The bound here is also
	$o(t^{-\frac m2})$
as
	$t\to0, $
by Lemma~\ref{lem:5}.
Combining the above estimates, we see that
	$u(t)\in \dot H^{m}(\mathbb R^3)^3$
for all
	$t>0$
and
	$\left\|u(t)\right\|_{\dot H^m}
	=O(t^{-\frac m2})$
as
	$t\to0.$
The first half of the statement is therefore done, and now we are left to prove the continuity of
	$u(t)\in \dot H^m(\mathbb R^3)^3$
for
	$t>0.$

Let
	$\varepsilon, s, t, $
and
	$M$
satisfy
	$0<\varepsilon<\frac s2<\frac t2<s<t<M.$
We write
	\begin{align}
		&u(t)-u(s)
		=T_\Omega(t)u_0-T_\Omega(s)u_0
		\notag
		\\
		&-\int_0^\frac t2e^{(t-\tau)\Delta}e^{-(t-\tau)\Omega\mathbb PJ\mathbb P}\mathbb P\operatorname{div}(u(\tau)\otimes u(\tau))\, d\tau
		+\int_0^\frac s2e^{(t-\tau)\Delta}e^{-(t-\tau)\Omega\mathbb PJ\mathbb P}\mathbb P\operatorname{div}(u(\tau)\otimes u(\tau))\, d\tau
		\label{eq:52}\\
		&-\int_0^\frac s2e^{(t-\tau)\Delta}e^{-(t-\tau)\Omega\mathbb PJ\mathbb P}\mathbb P\operatorname{div}(u(\tau)\otimes u(\tau))\, d\tau
		+\int_0^\frac s2e^{(s-\tau)\Delta}e^{-(t-\tau)\Omega\mathbb PJ\mathbb P}\mathbb P\operatorname{div}(u(\tau)\otimes u(\tau))\, d\tau
		\label{eq:53}\\
		&-\int_0^\frac s2e^{(s-\tau)\Delta}e^{-(t-\tau)\Omega\mathbb PJ\mathbb P}\mathbb P\operatorname{div}(u(\tau)\otimes u(\tau))\, d\tau
		+\int_0^\frac s2e^{(s-\tau)\Delta}e^{-(s-\tau)\Omega\mathbb PJ\mathbb P}\mathbb P\operatorname{div}(u(\tau)\otimes u(\tau))\, d\tau
		\label{eq:54}\\
		&-\int_\frac t2^te^{(t-\tau)\Delta}e^{-(t-\tau)\Omega\mathbb PJ\mathbb P}\mathbb P\operatorname{div}(u(\tau)\otimes u(\tau))\, d\tau
		+\int^s_\frac t2e^{(t-\tau)\Delta}e^{-(t-\tau)\Omega\mathbb PJ\mathbb P}\mathbb P\operatorname{div}(u(\tau)\otimes u(\tau))\, d\tau
		\label{eq:55}\\
		&-\int^s_\frac t2e^{(t-\tau)\Delta}e^{-(t-\tau)\Omega\mathbb PJ\mathbb P}\mathbb P\operatorname{div}(u(\tau)\otimes u(\tau))\, d\tau
		+\int^s_\frac s2e^{(t-\tau)\Delta}e^{-(t-\tau)\Omega\mathbb PJ\mathbb P}\mathbb P\operatorname{div}(u(\tau)\otimes u(\tau))\, d\tau
		\label{eq:56}\\
		&-\int^s_\frac s2e^{(t-\tau)\Delta}e^{-(t-\tau)\Omega\mathbb PJ\mathbb P}\mathbb P\operatorname{div}(u(\tau)\otimes u(\tau))\, d\tau
		+\int^s_\frac s2e^{(s-\tau)\Delta}e^{-(t-\tau)\Omega\mathbb PJ\mathbb P}\mathbb P\operatorname{div}(u(\tau)\otimes u(\tau))\, d\tau
		\label{eq:57}\\
		&-\int^s_\frac s2e^{(s-\tau)\Delta}e^{-(t-\tau)\Omega\mathbb PJ\mathbb P}\mathbb P\operatorname{div}(u(\tau)\otimes u(\tau))\, d\tau
		+\int^s_\frac s2e^{(s-\tau)\Delta}e^{-(s-\tau)\Omega\mathbb PJ\mathbb P}\mathbb P\operatorname{div}(u(\tau)\otimes u(\tau))\, d\tau
		\label{eq:58}
	\end{align}
and show that the right-hand side of the equality tends to zero as
	$t-s\to0$
in
	$\dot H^m(\mathbb R^3)^3.$
Note that the sum of \eqref{eq:52} and \eqref{eq:56} is equal to 0.

For the linear part, we have
	\begin{align*}
		&\left\|T_\Omega(t)u_0-T_\Omega(s)u_0\right\|_{\dot H^m}\\
		&=\left\|(T_\Omega(t-s)-I)T_\Omega(s)u_0\right\|_{\dot H^m}\\
		&\lesssim \left\|(e^{(t-s)\Delta}-I)e^{s\Delta}u_0\right\|_{\dot H^m}
		+\left\|(e^{-(t-s)\Omega\mathbb P J\mathbb P}-I)e^{s\Delta}u_0\right\|_{\dot H^m}\\
		&\leq\left\|(e^{(t-s)\Delta}-I)e^{\varepsilon\Delta}u_0\right\|_{\dot H^m}
		+\left\|(e^{-(t-s)\Omega\mathbb P J\mathbb P}-I)e^{\varepsilon\Delta}u_0\right\|_{\dot H^m}.
	\end{align*}
Since the smoothing property of the heat kernel ensures that
	$e^{\varepsilon\Delta}u_0\in \dot H^m(\mathbb R^3)^3 , $
the dominated convergence theorem is enough to see that the right-hand side tends to zero as
	$t-s\to0.$


For \eqref{eq:53}, we have
	\begin{align*}
		&\left\|\int_0^\frac s2(e^{(t-s)\Delta}-I)e^{(s-\tau)\Delta}e^{-(t-\tau)\Omega\mathbb PJ\mathbb P}\mathbb P\operatorname{div}(u(\tau)\otimes u(\tau))\, d\tau\right\|_{\dot H^m}\\
		&\lesssim \int_0^\frac s2\left\|(e^{(t-s)\Delta}-I)e^{(s-\tau)\Delta}\operatorname{div}(u(\tau)\otimes u(\tau))\right\|_{\dot H^m}\, d\tau.
	\end{align*}
By the Plancherel theorem and the mean value theorem, we can calculate the integrand as
	\begin{align*}
		&\left\|(e^{(t-s)\Delta}-I)e^{(s-\tau)\Delta}\operatorname{div}(u(\tau)\otimes u(\tau))\right\|_{\dot H^m}\\
		&=\left(\int_{\mathbb R^3}
		\left|(t-s)|\xi|^2\int_0^1e^{-\theta(t-s)|\xi|^2}\, d\theta\right|^2
		\left|\mathcal F\left((-\Delta)^{\frac m2}e^{(s-\tau)\Delta}\operatorname{div}(u(\tau)\otimes u(\tau))\right)(\xi)\right|^2\, d\xi\right)^\frac12\\
		&\leq (t-s)\left\|e^{(s-\tau)\Delta}\operatorname{div}(u(\tau)\otimes u(\tau))\right\|_{\dot H^{m+2}}\\
		&\lesssim (t-s)(s-\tau)^{-\frac{m+2}2-\frac{1-s}2-\frac32(\frac 2q-\frac s3-\frac12)}
		\left\|u(\tau)\right\|_{\dot W^{s, q}}^2, 
	\end{align*}
and thus we obtain
	\begin{align*}
		&\left\|\int_0^\frac s2(e^{(t-s)\Delta}-I)e^{(s-\tau)\Delta}e^{-(t-\tau)\Omega\mathbb PJ\mathbb P}\mathbb P\operatorname{div}(u(\tau)\otimes u(\tau))\, d\tau\right\|_{\dot H^m}\\
		&\lesssim (t-s)\varepsilon^{-\frac{m+2}2}M^{1-\frac2\theta-\frac{1-s}2-\frac32(\frac2q-\frac s3-\frac12)}
		\left\|u\right\|_{L^\theta([0, \infty); \dot W^{s, q})}^2
		\to 0\quad\text{as $t-s\to0$.}
	\end{align*}

The estimate of \eqref{eq:54} can be done in a similar way. We have
	\begin{align}
		&\left\|\int_0^\frac s2(e^{-(t-s)\Omega\mathbb PJ\mathbb P}-I)e^{(s-\tau)\Delta}e^{-(s-\tau)\Omega\mathbb PJ\mathbb P}\mathbb P\operatorname{div}(u(\tau)\otimes u(\tau))\, d\tau\right\|_{\dot H^m}\notag\\
		&\lesssim\int_0^\frac s2\left\|(e^{-(t-s)\Omega\mathbb PJ\mathbb P}-I)e^{(s-\tau)\Delta}\mathbb P\operatorname{div}(u(\tau)\otimes u(\tau))\right\|_{\dot H^m}\, d\tau.\label{eq:71}
	\end{align}
In this case, the integrand can be calculated as
	\begin{align*}
		&\left\|(e^{-(t-s)\Omega\mathbb PJ\mathbb P}-I)e^{(s-\tau)\Delta}\mathbb P\operatorname{div}(u(\tau)\otimes u(\tau))\right\|_{\dot H^m}\\
		&=\left(\int_{\mathbb R^3}
		\left|\left(\left(\cos\left(\Omega\frac{\xi_3}{|\xi|}(t-s)\right)-1\right)I+\sin\left(\Omega\frac{\xi_3}{|\xi|}(t-s)\right)R(\xi)\right)
		\mathcal F\left(e^{(s-\tau)\Delta}\mathbb P\operatorname{div}(u(\tau)\otimes u(\tau))\right)(\xi)\right|^2\, d\xi\right)^\frac12\\
		&\lesssim \left(\left|\Omega(t-s)\right|^2+\left|\Omega(t-s)\right|\right)
		\left\|e^{(s-\tau)\Delta}\operatorname{div}(u(\tau)\otimes u(\tau))\right\|_{\dot H^m}\\
		&\leq |\Omega|(t-s)\left(1+|\Omega|(t-s)\right)
		(s-\tau)^{-\frac m2-\frac{1-s}2-\frac32(\frac 2q-\frac s3-\frac12)}
		\left\|u(\tau)\right\|_{\dot W^{s, q}}^2.
	\end{align*}
Inserting the estimate into \eqref{eq:71} leads to
	\begin{align*}
		&\left\|\int_0^\frac s2(e^{-(t-s)\Omega\mathbb PJ\mathbb P}-I)e^{(s-\tau)\Delta}e^{-(s-\tau)\Omega\mathbb PJ\mathbb P}\mathbb P\operatorname{div}(u(\tau)\otimes u(\tau))\, d\tau\right\|_{\dot H^m}\\
		&\lesssim |\Omega|(t-s)\left(1+|\Omega|(t-s)\right)
		\varepsilon^{-\frac{m}2}M^{1-\frac2\theta-\frac{1-s}2-\frac32(\frac2q-\frac s3-\frac12)}
		\left\|u\right\|_{L^\theta([0, \infty); \dot W^{s, q})}^2
		\to 0\quad\text{as $t-s\to0$.}
	\end{align*}
	
For \eqref{eq:55}, we have
	\begin{align*}
		&\left\|\int_s^te^{(t-\tau)\Delta}e^{-(t-\tau)\Omega\mathbb PJ\mathbb P}\mathbb P\operatorname{div}(u(\tau)\otimes u(\tau))\, d\tau\right\|_{\dot H^m}\\
		&\lesssim \int_s^t\left\|e^{(t-\tau)\Delta}\operatorname{div}(u(\tau)\otimes u(\tau))\right\|_{\dot H^m}\, d\tau\\
		&\lesssim \int_s^t(t-\tau)^{-\frac{1-s}2-\frac32(\frac 2q-\frac s3-\frac12)}\left\|u(\tau)\right\|_{\dot W^{s+m, q}}\left\|u(\tau)\right\|_{\dot W^{s, q}}\, d\tau\\
		&\lesssim (t-s)^{1-\frac 2\theta-\frac{1-s}2-\frac32(\frac2q-\frac s3-\frac12)}
		\left\|u\right\|_{L^\theta([\varepsilon, \infty); \dot W^{s+m, q})}
		\left\|u\right\|_{L^\theta([0, \infty); \dot W^{s, q})}
		\to 0
		\quad\text{as $t-s\to 0$.}
	\end{align*}
	
For \eqref{eq:57}, similarly to the estimate of \eqref{eq:53}, we have
	\begin{align*}
		&\left\|\int^s_\frac s2(e^{(t-s)\Delta}-I)e^{(s-\tau)\Delta}e^{-(t-\tau)\Omega\mathbb PJ\mathbb P}\mathbb P\operatorname{div}(u(\tau)\otimes u(\tau))\, d\tau\right\|_{\dot H^m}\\
		&\lesssim \int^s_\frac s2\left\|(e^{(t-s)\Delta}-I)e^{(s-\tau)\Delta}\operatorname{div}(u(\tau)\otimes u(\tau))\right\|_{\dot H^m}\, d\tau\\
		&\lesssim (t-s)\int^s_\frac s2(s-\tau)^{-\frac{1-s}2-\frac32(\frac 2q-\frac s3-\frac 12)}\left\|u(\tau)\right\|_{\dot W^{s+m+2, q}}\left\| u(\tau)\right\|_{\dot W^{s, q}}\, d\tau\\
		&\lesssim (t-s)s^{1-\frac 2\theta-\frac{1-s}2-\frac32(\frac 2q-\frac s3-\frac 12)}
		\left\|u\right\|_{L^\theta([\varepsilon, \infty); \dot W^{s+m+2, q})}
		\left\|u\right\|_{L^\theta([0, \infty); \dot W^{s, q})}
		\to 0\quad\text{as $t-s\to0$.}
	\end{align*}

Finally for \eqref{eq:58}, we similarly obtain
	\begin{align*}
		&\left\|\int^s_\frac s2(e^{-(t-s)\Omega\mathbb PJ\mathbb P}-I)e^{(s-\tau)\Delta}e^{-(s-\tau)\Omega\mathbb PJ\mathbb P}\mathbb P\operatorname{div}(u(\tau)\otimes u(\tau))\, d\tau\right\|_{\dot H^m}\\
		&\lesssim |\Omega|(t-s)(1+|\Omega|(t-s))s^{1-\frac 2\theta-\frac{1-s}2-\frac32(\frac 2q-\frac s3-\frac 12)}
		\left\|u\right\|_{L^\theta([\varepsilon, \infty); \dot W^{s+m, q})}
		\left\|u\right\|_{L^\theta([0, \infty); \dot W^{s, q})}\\
		&\to 0\quad\text{as $t-s\to0$.}
	\end{align*}
Combining the above estimates is sufficient to obtain the desired continuity of
	$u(t)$
in
	$\dot H^m(\mathbb R^3)^3.$
\qed

\begin{lemma}
	The solution 
		$u$
	satisfies
		$u\in C([0, \infty); L^2(\mathbb R^3)^3\cap \dot H^s(\mathbb R^3)^3).$
\end{lemma}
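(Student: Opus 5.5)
Since $u\in C([0,\infty);\dot H^s(\mathbb R^3)^3)$ is already given by Theorem~\ref{thm:1}, the task reduces to showing $u\in C([0,\infty);L^2(\mathbb R^3)^3)$. The plan is to estimate the Duhamel representation \eqref{eq:4} directly in $L^2$, treating the linear part and the nonlinear part separately, and to get both membership in $L^2$ and continuity, including at $t=0$.

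For the linear part, $T_\Omega(t)$ is a contraction on $L^2$ by Plancherel, since the symbol $e^{-t|\xi|^2}(\cos(\cdot)I+\sin(\cdot)R(\xi))$ has operator norm at most $e^{-t|\xi|^2}$ on divergence-free fields (on $\xi^\perp$, $R(\xi)$ acts as a rotation). Strong continuity on $[0,\infty)$ then follows from dominated convergence applied to $\widehat{u_0}$.

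For the nonlinear part $N(t)=\int_0^tT_\Omega(t-\tau)\mathbb P\operatorname{div}(u\otimes u)\,d\tau$, I would use the heat smoothing from $L^{r}$ to $\dot H^{-s}$-type bounds, as in the proof of Lemma~\ref{lem:6} with $m=0$. Set $\frac1r=\frac2q-\frac s3$. By Proposition~\ref{prop:2.5}, Sobolev embedding, and smoothing of $e^{(t-\tau)\Delta}$, we get
\begin{align*}
\|T_\Omega(t-\tau)\mathbb P\operatorname{div}(u\otimes u)\|_{L^2}\lesssim (t-\tau)^{-\frac{1-s}2-\frac32(\frac2q-\frac s3-\frac12)}\|u(\tau)\|_{\dot W^{s,q}}^2 .
\end{align*}
Hölder in time with exponent $\theta/2$ then gives
\begin{align*}
\|N(t)\|_{L^2}\lesssim t^{\gamma}\|u\|_{L^\theta([0,t];\dot W^{s,q})}^2, \qquad \gamma=1-\tfrac2\theta-\tfrac{1-s}2-\tfrac32\left(\tfrac2q-\tfrac s3-\tfrac12\right).
\end{align*}
The integrability of the kernel requires the exponent $\frac{1-s}2+\frac32(\frac2q-\frac s3-\frac12)$ to be less than $1-\frac2\theta$. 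As observed in Lemma~\ref{lem:6}, this is exactly $\gamma=\frac s2+2(\frac58-\frac3{2q}+\frac s4-\frac1\theta)>0$, which holds by \eqref{eq:6}. Consequently $u(t)\in L^2$ for every $t$, and $\|N(t)\|_{L^2}\to0$ as $t\to0$, which gives continuity at $t=0$. (Alternatively, since $\|u\|_{L^\theta([0,t])}\to0$, even the factor $t^\gamma$ is not needed.)

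For continuity at $t_0>0$, Lemma~\ref{lem:6} with $m=0$ already gives $u\in C((0,\infty);L^2)$, so only $t=0$ really needs the argument above. For safety I would also verify that the kernel exponent $\frac{1-s}2+\frac32(\frac2q-\frac s3-\frac12)$ is nonnegative, so that the smoothing estimate is legitimate; this needs $\frac1{r}\ge\frac12$ adjusted with $s$, i.e., $\frac2q-\frac s3\geq\frac12-\frac{1-s}3$, which follows from \eqref{eq:5}. The main obstacle is this exponent bookkeeping; the rest is routine.
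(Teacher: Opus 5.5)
Your argument is correct and is essentially the paper's proof. The paper likewise uses Lemma~\ref{lem:6} to reduce to continuity at $t=0$, and bounds the Duhamel term in $\dot H^\sigma$, $\sigma\in[0,s]$, by $t^{1-\frac2\theta-\frac{1+\sigma-s}2-\frac32(\frac2q-\frac s3-\frac12)}\|u\|_{L^\theta([0,\infty);\dot W^{s,q})}^2$. Your estimate is the case $\sigma=0$, and you take the $\dot H^s$ part from Theorem~\ref{thm:1} instead of rerunning the argument with $\sigma=s$.

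One small correction to your side check: the $L^r\to L^2$ heat smoothing is justified by $r\le 2$, i.e.\ $\frac2q-\frac s3\ge\frac12$ (together with $s\le 1$), not merely by nonnegativity of the time exponent. This condition does hold, since \eqref{eq:5} gives $\frac2q-\frac s3\ge\frac23-\frac s9>\frac12$.
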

\proof
Thanks to Lemma~\ref{lem:6}, it is enough to show the continuity of 
	$u$
at
	$t=0$
in 
	$L^2(\mathbb R^3)^3\cap \dot H^s(\mathbb R^3)^3.$
Let
	$\sigma$
be a real number in 
	$[0, s].$
We have
	\begin{align*}
		\left\|u(t)-u_0\right\|_{\dot H^\sigma}
		\leq \left\|T_\Omega(t)u_0-u_0\right\|_{\dot H^\sigma}
		+\int_0^t\left\|T_\Omega(t-\tau)\mathbb P\operatorname{div}(u(\tau)\otimes u(\tau))\right\|_{\dot H^\sigma}\, d\tau, 
	\end{align*}
and the first term on the right-hand side goes to zero as
	$t\to0, $
since we have
	$u_0\in \dot H^\sigma(\mathbb R^3)^3.$
For the second term, noting the condition
	$\sigma\leq s, $
we see that
	\begin{align*}
		&\int_0^t\left\|T_\Omega(t-\tau)\mathbb P\operatorname{div}(u(\tau)\otimes u(\tau))\right\|_{\dot H^\sigma}\, d\tau\\
		&\lesssim \int_0^t(t-\tau)^{-\frac{1+\sigma-s}2-\frac32(\frac 2q-\frac s3-\frac 12)}\left\|u(\tau)\right\|_{\dot W^{s, q}}^2\, d\tau\\
		&\lesssim  t^{1-\frac2\theta-\frac{1+\sigma-s}2-\frac32(\frac 2q-\frac s3-\frac 12)}
		\left\|u\right\|_{L^\theta([0, \infty); \dot W^{s, q})}^2
		\to 0
		\quad\text{as $t\to0.$}
	\end{align*}
This completes the proof.
\qed

\begin{proposition}
	For any
		$m\geq 0$, 
	the solution
		$u$
	satisfies
		$\partial_tu+\mathbb P(u\cdot \nabla)u -\Delta u+\Omega \mathbb P J \mathbb P u=0$
	in
		$\dot H^m(\mathbb R^3)^3$
	for
		$t>0.$
\end{proposition}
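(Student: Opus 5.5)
Fix $m\ge0$ and $t_0>0$. By Lemma~\ref{lem:6}, $u\in C((0,\infty);\dot H^{k}(\mathbb R^3)^3)$ for every $k\ge0$, and $\|u(t)\|_{\dot H^k}=O(t^{-k/2})$ as $t\to0$. Together with $u\in C([0,\infty);L^2)$ from the preceding lemma, this gives $u\in C([\delta,\infty);H^k)$ for all $k$ and $\delta>0$. The plan is to differentiate the Duhamel formula~\eqref{eq:4}, rewritten with a shifted initial time. The cleanest choice is to restart at $\delta\in(0,t_0)$. Uniqueness is not needed: using $T_\Omega(t)=T_\Omega(t-\delta)T_\Omega(\delta)$ and splitting the integral in \eqref{eq:4} at $\delta$, one checks directly that
\begin{align*}
u(t)=T_\Omega(t-\delta)u(\delta)-\int_\delta^tT_\Omega(t-\tau)F(\tau)\,d\tau,\qquad F(\tau)\coloneqq\mathbb P\operatorname{div}(u(\tau)\otimes u(\tau)),
\end{align*}
for $t\ge\delta$.

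First, I will show that $F\in C([\delta,\infty);H^{k})$ for every $k$. This follows from the algebra property of $H^{k}$ for $k>3/2$, the boundedness of $\mathbb P$ on $H^k$, and the continuity of $u$ in every $H^{k+1}$ on $[\delta,\infty)$.

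Next, note that $A\coloneqq\Delta-\Omega\mathbb PJ\mathbb P$ generates a $C_0$-semigroup on $H^m$ with domain $H^{m+2}$. Its symbol is $-|\xi|^2$ times the identity plus a bounded skew part, so this can be checked on the Fourier side. Since $u(\delta)\in H^{m+2}$, the linear part $T_\Omega(t-\delta)u(\delta)$ is $C^1$ in $H^m$, with derivative $AT_\Omega(t-\delta)u(\delta)$.

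For the Duhamel part I will use the standard result for inhomogeneous abstract Cauchy problems. If $F\in C([\delta,\infty);D(A))$, here $D(A)=H^{m+2}$, then $v(t)=\int_\delta^tT_\Omega(t-\tau)F(\tau)\,d\tau$ is a strict solution in $H^m$, that is, $v\in C^1([\delta,\infty);H^m)$ with $v'=Av+F$. If I prefer to avoid semigroup theory, I can do it by hand. I would write the difference quotient as
\begin{align*}
\frac{v(t+h)-v(t)}h=\frac{T_\Omega(h)-I}h\,v(t)+\frac1h\int_t^{t+h}T_\Omega(t+h-\tau)F(\tau)\,d\tau.
\end{align*}
The first term converges to $Av(t)$ in $H^m$ because $v(t)\in H^{m+2}$. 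This holds since $F\in C(H^{m+2})$ and $T_\Omega$ is uniformly bounded on $H^{m+2}$; the bound $\|(T_\Omega(h)-I)g\|_{H^m}\lesssim h(1+|\Omega|)\|g\|_{H^{m+2}}$ follows from the mean value estimates already used in the proof of Lemma~\ref{lem:6}. The second term converges to $F(t)$ by the strong continuity of $T_\Omega$ and the continuity of $F$.

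Combining the two parts gives $\partial_tu=Au-F$ in $H^m$ for $t>\delta$, with $\partial_tu$ continuous in $H^m$. Since $\mathbb P\operatorname{div}(u\otimes u)=\mathbb P(u\cdot\nabla)u$ for divergence-free $u$, this is exactly the claimed equation. Divergence-freeness of $u(t)$ holds because every term of \eqref{eq:4} lies in the range of $\mathbb P$, given $\operatorname{div}u_0=0$. Since $\delta$ is arbitrary, the equation holds for all $t>0$ and in $\dot H^m$ as well. It also gives $u\in C^1((0,\infty);H^m)$.

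I expect the main obstacle to be the left-derivative, or equivalently the uniform control of the second term, in the difference quotient. This step is routine once $F$ is known to be continuous into $H^{m+2}$, which is why establishing that regularity of the nonlinearity from Lemma~\ref{lem:6} is the key input.
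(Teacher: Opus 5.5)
Your argument is correct, but it takes a different route from the paper.

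\textbf{The paper's route.} The paper differentiates the Duhamel formula \eqref{eq:4} directly from the initial time $0$. It splits the difference quotient into three pieces: the linear part, the short-time piece $\frac1h\int_t^{t+h}$, and the variation of the propagator over $[0,t]$. The last piece must be cut again at $\tau=t/2$, because near $\tau=0$ the nonlinearity is controlled only in $L^\theta\dot W^{s,q}$. On $[0,t/2]$ the paper uses the smoothing of $e^{(t-\tau)\Delta}$ together with dominated convergence against singular but integrable majorants. On $[t/2,t]$ it uses the smoothing of the nonlinear flow from Lemmas~\ref{lem:5} and \ref{lem:6}, and those pieces, as well as the left derivative, are only sketched.

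\textbf{Your route.} You use the semigroup property to restart at $\delta\in(0,t)$. This moves the whole rough history of the solution into the datum $u(\delta)$, which already lies in every $H^k$ by Lemma~\ref{lem:6} and the $L^2$-continuity lemma. On $[\delta,\infty)$ the forcing $F$ is then continuous into $D(A)=H^{m+2}$. The textbook theorem on strict solutions of $v'=Av+F$ gives two-sided differentiability and $u\in C^1((0,\infty);H^m)$ at once, with no singular time weights.

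\textbf{Comparison.} The paper's argument is self-contained and reuses only the multiplier estimates of Section~3. Yours is shorter and handles the left derivative for free, at the price of invoking (or reproving) the abstract Cauchy-problem result.

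If you carry out the by-hand version, a few points should be made explicit:
\begin{itemize}
\item The estimate $\|(T_\Omega(h)-I)g\|_{H^m}\lesssim h(1+|\Omega|)\|g\|_{H^{m+2}}$ only bounds the quotient. Its convergence to $Av(t)$ is the generator property, i.e.\ dominated convergence on the Fourier side.
\item The left derivative closes because $Av(t)=\int_\delta^tT_\Omega(t-\tau)AF(\tau)\,d\tau$ is continuous in $H^m$. Alternatively, a continuous function with a continuous right derivative is $C^1$.
\item Working in inhomogeneous $H^m$ uses the standing hypothesis $u_0\in H^s$ of Section~3.
\item Replacing $\mathbb P\operatorname{div}(u\otimes u)$ by $\mathbb P(u\cdot\nabla)u$ needs $\operatorname{div}u(t)=0$. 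This holds because $\mathbb P$ commutes with $\Delta-\Omega\mathbb PJ\mathbb P$, hence with $T_\Omega(t)$.
\end{itemize}
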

\proof
	In the following argument, we assume that
		$h>0$
	and
		$t>0$
	hold.
	We write
		\begin{align}
			&\frac{u(t+h)-u(t)}{h}
			=\frac{T_\Omega(t+h)u_0-T_\Omega(t)u_0}{h}\notag\\
			&-\frac1{h}\left(\int_0^{t+h}T_\Omega(t+h-\tau)\mathbb P\operatorname{div}(u(\tau)\otimes u(\tau))\, d\tau
			-\int_0^{t}T_\Omega(t+h-\tau)\mathbb P\operatorname{div}(u(\tau)\otimes u(\tau))\, d\tau\right)\quad
			\label{eq:94}\\
			&-\frac1{h}\left(\int_0^{t}T_\Omega(t+h-\tau)\mathbb P\operatorname{div}(u(\tau)\otimes u(\tau))\, d\tau
			-\int_0^{t}T_\Omega(t-\tau)\mathbb P\operatorname{div}(u(\tau)\otimes u(\tau))\, d\tau\right)
			\label{eq:95}
		\end{align}
	and calculate the limits of each line.
	
	For the linear term, we have
		\begin{align}
			&\left\|\frac{T_\Omega(t+h)u_0-T_\Omega(t)u_0}{h}
			-\Delta T_\Omega(t)u_0+\Omega\mathbb P J \mathbb PT_\Omega(t)u_0\right\|_{\dot H^m}\notag\\
			&\leq \left\|\frac{1}{h}\left(e^{h\Delta}e^{-h\Omega\mathbb P J \mathbb P}T_\Omega(t)u_0
			-e^{h\Delta}T_\Omega(t)u_0\right)
			+e^{h\Delta}\Omega\mathbb P J\mathbb P T_\Omega(t)u_0\right\|_{\dot H^m}
			\label{eq:97}\\
			&+ \left\|-e^{h\Delta}\Omega\mathbb P J\mathbb P T_\Omega(t)u_0
			+\Omega\mathbb P J\mathbb P T_\Omega(t)u_0\right\|_{\dot H^m}
			\label{eq:98}\\
			&+\left\|\frac{1}{h}\left(e^{h\Delta}T_\Omega(t)u_0
			-T_\Omega(t)u_0\right)
			-\Delta T_\Omega(t)u_0\right\|_{\dot H^m}.
			\label{eq:99}
		\end{align}
	For \eqref{eq:97}, the mean value theorem yields
		\begin{align*}
			&\left\|\frac{1}{h}\left(e^{-h\Omega\mathbb P J \mathbb P}T_\Omega(t)u_0
			-T_\Omega(t)u_0\right)
			+\Omega\mathbb P J\mathbb P T_\Omega(t)u_0\right\|_{\dot H^m}\\
			&=\left(\int_{\mathbb R^3}\left|\int_0^1
			\left(e^{-\theta h\Omega P(\xi)JP(\xi)}-I\right)\, d\theta
			\mathcal F\left((-\Delta)^\frac m2\Omega\mathbb P J \mathbb P T_\Omega(t)u_0\right)(\xi)\right|^2\right)^\frac 12.
		\end{align*}
	Since
		$T_\Omega(t)u_0\in \dot H^m(\mathbb R^3)^3, $
	the dominated convergence theorem ensures that the right-hand side tends to zero as
		$h\to0.$
	We see that \eqref{eq:98} tends to zero for the same reason. For \eqref{eq:99}, we similarly have
		\begin{align*}
			&\left\|\frac{1}{h}\left(e^{h\Delta}T_\Omega(t)u_0
			-T_\Omega(t)u_0\right)
			-\Delta T_\Omega(t)u_0\right\|_{\dot H^m}\\
			&=\left(\int_{\mathbb R^3}\left|\int_0^1\left(e^{-\theta h|\xi|^2}-1\right)\, d\theta
			\mathcal F\left((-\Delta)^{\frac{m+2}2}T_\Omega(t)u_0\right)(\xi)\right|^2\, d\xi\right)^\frac12
		\end{align*}
	and this also tends to zero as
		$h\to 0$
	by the dominated convergence theorem.
	
	We next go on to the estimate of \eqref{eq:94}. We have
		\begin{align}
			&\left\|\frac1h\int_t^{t+h} T_\Omega(t+h-\tau)\mathbb P\operatorname{div}(u(\tau)\otimes u(\tau))\, d\tau 
			-\mathbb P\operatorname{div}(u(t)\otimes u(t))\right\|_{\dot H^m}\notag\\
			&\leq \left\|\frac1h\int_t^{t+h} T_\Omega(t+h-\tau)\mathbb P\operatorname{div}(u(\tau)\otimes u(\tau)-u(t)\otimes u(t))\, d\tau \right\|_{\dot H^m}\notag\\
			&+\left\|\frac1h\int_t^{t+h} \left(T_\Omega(t+h-\tau)-I\right)\mathbb P\operatorname{div}(u(t)\otimes u(t))\, d\tau \right\|_{\dot H^m}\notag\\
			&\lesssim \sup_{t<\tau<t+h}
			\left\|u(\tau)\otimes u(\tau)
			-u(t)\otimes u(t)\right\|_{\dot H^{m+1}}
			\label{eq:107}\\
			&+\sup_{t<\tau<t+h}
			\left\|\left(T_\Omega(t+h-\tau)-I\right)(u(t)\otimes u(t))\right\|_{\dot H^{m+1}}.
			\label{eq:108}
		\end{align}
	Lemma~\ref{lem:6} gives the continuity of
		$u(\tau)\otimes u(\tau)$
	for
		$\tau>0$
	in
		$H^{m+1}(\mathbb R^3), $
	and	we see that both \eqref{eq:107} and \eqref{eq:108} go to zero by letting
		$h\to0.$
	
	We are now left to calculate the limit of \eqref{eq:95}. For that purpose, we further decompose the concerning term as
		\begin{align}
			&\frac1{h}\left(\int_0^{t}T_\Omega(t+h-\tau)\mathbb P\operatorname{div}(u(\tau)\otimes u(\tau))\, d\tau
			-\int_0^{t}T_\Omega(t-\tau)\mathbb P\operatorname{div}(u(\tau)\otimes u(\tau))\, d\tau\right)\notag\\
			&=\frac1h\int_0^\frac t2 (e^{h\Delta}-I)e^{-h\Omega\mathbb P J \mathbb P}T_\Omega(t-\tau)\mathbb P\operatorname{div}(u(\tau)\otimes u(\tau))\, d\tau
			\label{eq:110}\\
			&+\frac1h\int_0^\frac t2 (e^{-h\Omega\mathbb P J \mathbb P}-I)T_\Omega(t-\tau)\mathbb P\operatorname{div}(u(\tau)\otimes u(\tau))\, d\tau
			\label{eq:111}\\
			&+\frac1h\int^t_\frac t2 (e^{h\Delta}-I)e^{-h\Omega\mathbb P J \mathbb P}T_\Omega(t-\tau)\mathbb P\operatorname{div}(u(\tau)\otimes u(\tau))\, d\tau
			\label{eq:112}\\
			&+\frac1h\int^t_\frac t2 (e^{-h\Omega\mathbb P J \mathbb P}-I)T_\Omega(t-\tau)\mathbb P\operatorname{div}(u(\tau)\otimes u(\tau))\, d\tau
			\label{eq:113}
		\end{align}
	and derive the limits of each line.
	
	For \eqref{eq:110}, we have
		\begin{align}
			&\left\|\frac1h\int_0^\frac t2 (e^{h\Delta}-I)e^{-h\Omega\mathbb P J \mathbb P}T_\Omega(t-\tau)\mathbb P\operatorname{div}(u(\tau)\otimes u(\tau))\, d\tau
			-\int_0^\frac t2\Delta T_\Omega(t-\tau)\mathbb P\operatorname{div}(u(\tau)\otimes u(\tau))\, d\tau\right\|_{\dot H^m}\notag\\
			&\leq \left\|\int_0^\frac t2 \frac{e^{h\Delta}-I}{h}(e^{-h\Omega\mathbb P J \mathbb P}-I)T_\Omega(t-\tau)\mathbb P\operatorname{div}(u(\tau)\otimes u(\tau))\, d\tau\right\|_{\dot H^m}
			\label{eq:115}\\
			&+\left\|\int_0^\frac t2 \left(\frac{e^{h\Delta}-I}{h}-\Delta\right)T_\Omega(t-\tau)\mathbb P\operatorname{div}(u(\tau)\otimes u(\tau))\, d\tau\right\|_{\dot H^m}.
			\label{eq:116}
		\end{align}
	For \eqref{eq:115}, we see that
		\begin{align*}
			&\left\|\int_0^\frac t2 \frac{e^{h\Delta}-I}{h}(e^{-h\Omega\mathbb P J \mathbb P}-I)T_\Omega(t-\tau)\mathbb P\operatorname{div}(u(\tau)\otimes u(\tau))\, d\tau\right\|_{\dot H^m}\\
			&\lesssim h|\Omega| (1+h |\Omega| )\int_0^\frac t2\left\|e^{(t-\tau)\Delta}\operatorname{div}(u(\tau)\otimes u(\tau))\right\|_{\dot H^{m+2}}\, d\tau\\
			&\lesssim h|\Omega| (1+h |\Omega| )\int_0^\frac t2
			(t-\tau)^{-\frac{m+2}2-\frac{1-s}2-\frac32(\frac2q-\frac s3-\frac 12)}
			\left\|u(\tau))\right\|_{\dot W^{s, q}}^2\, d\tau\\
			&\lesssim h|\Omega| (1+h |\Omega| )t^{-\frac {m+2}2+1-\frac 2\theta-\frac{1-s}2-\frac32(\frac2q-\frac s3-\frac 12)}\left\|u\right\|_{L^\theta([0, \infty); \dot W^{s, q})}^2
			\to 0\quad\text{as $h\to0.$}
		\end{align*}
	For \eqref{eq:116}, we have
		\begin{align*}
			&\left\|\int_0^\frac t2 \left(\frac{e^{h\Delta}-I}{h}-\Delta\right)T_\Omega(t-\tau)\mathbb P\operatorname{div}(u(\tau)\otimes u(\tau))\, d\tau\right\|_{\dot H^m}\\
			&\lesssim \int_0^\frac t2 \left\|\left(\frac{e^{h\Delta}-I}{h}-\Delta\right)e^{(t-\tau)\Delta}\operatorname{div}(u(\tau)\otimes u(\tau))\right\|_{\dot H^{m}}\, d\tau.
		\end{align*}
	For each
		$\tau\in(0, \frac t2)$, 
	by the mean value theorem, we see that
		\begin{align*}
			&\left\|\left(\frac{e^{h\Delta}-I}{h}-\Delta\right)e^{(t-\tau)\Delta}\operatorname{div}(u(\tau)\otimes u(\tau))\right\|_{\dot H^{m}}\\
			&=\left(\int_{\mathbb R^3}\left|\int_0^1(e^{-\theta h|\xi|^2}-1)\, d\theta\right|^2\left|\mathcal F\left((-\Delta)^{\frac{m+2}2}e^{(t-\tau)\Delta}\operatorname{div}(u(\tau)\otimes u(\tau))\right)(\xi)\right|^2\, d\xi\right)^\frac12, 
		\end{align*}
	and by the dominated convergence theorem this tends to zero as
		$h\to0.$
	We also have
		\begin{align*}
			&\left\|\left(\frac{e^{h\Delta}-I}{h}-\Delta\right)e^{(t-\tau)\Delta}\operatorname{div}(u(\tau)\otimes u(\tau))\right\|_{\dot H^{m}}\\
			&\lesssim \left\|e^{(t-\tau)\Delta}\operatorname{div}(u(\tau)\otimes u(\tau))\right\|_{\dot H^{m+2}}\\
			&\lesssim (t-\tau)^{-\frac{m+2}2-\frac{1-s}2-\frac32(\frac2q-\frac s3-\frac12)}\left\|u(\tau)\right\|_{\dot W^{s, q}}^2, 
		\end{align*}
	where the bound is integrable on
		$(0, \frac t2).$
	Therefore, applying the dominated convergence theorem again is enough to see that the term \eqref{eq:116} goes to zero as
		$h\to0.$
		
	For \eqref{eq:111}, we write
		\begin{align*}
			&\left\|\int_0^\frac t2 \frac{e^{-h\Omega\mathbb P J \mathbb P}-I}{h}T_\Omega(t-\tau)\mathbb P\operatorname{div}(u(\tau)\otimes u(\tau))\, d\tau
			+\int_0^\frac t2 \Omega \mathbb P J \mathbb PT_\Omega(t-\tau)\mathbb P\operatorname{div}(u(\tau)\otimes u(\tau))\, d\tau\right\|_{\dot H^m}\\
			&\lesssim \int_0^\frac t2 \left\|\left(\frac{e^{-h\Omega\mathbb P J \mathbb P}-I}{h}+\Omega \mathbb P J \mathbb P\right)e^{(t-\tau)\Delta}\operatorname{div}(u(\tau)\otimes u(\tau))\right\|_{\dot H^m}\, d\tau.
		\end{align*}
	Then for
		$\tau\in (0, \frac t2)$, 
	we similarly obtain
		\begin{align*}
			&\left\|\left(\frac{e^{-h\Omega\mathbb P J \mathbb P}-I}{h}+\Omega \mathbb P J \mathbb P\right)e^{(t-\tau)\Delta}\operatorname{div}(u(\tau)\otimes u(\tau))\right\|_{\dot H^m}\\
			&=\left(\int_{\mathbb R^3}\left|\int_0^1\left(e^{-\theta h\Omega P(\xi) J P(\xi)}-I\right)\, d\theta
			\mathcal F\left(\Omega\mathbb P J \mathbb Pe^{(t-\tau)\Delta}\operatorname{div}(u(\tau)\otimes u(\tau))\right)(\xi)\right|^2\, d\xi\right)^\frac 12\\
			&\to0\quad\text{as $h\to0$}
		\end{align*}
	and
		\begin{align*}
			&\left\|\left(\frac{e^{-h\Omega\mathbb P J \mathbb P}-I}{h}+\Omega \mathbb P J \mathbb P\right)e^{(t-\tau)\Delta}\operatorname{div}(u(\tau)\otimes u(\tau))\right\|_{\dot H^m}\\
			&\lesssim |\Omega|(t-\tau)^{-\frac m2-\frac{1-s}2-\frac32(\frac2q-\frac s3-\frac12)}
			\left\|u(\tau)\right\|_{\dot W^{s, q}}^2
			\in L^1((0, t/2)).
		\end{align*}
	Thus applying the dominated convergence ensures that \eqref{eq:111} tends to zero as
		$h\to0.$
	
	The estimates of \eqref{eq:112} and \eqref{eq:113} can be done in almost the same way. The only difference is that the application of the smoothing property of the heat kernel
		$e^{(t-\tau)\Delta}$
	is replaced by that of the smoothing effect of the nonlinear flow (see also the argument in \citep[Proposition~4.4]{Yos26}).
	The resulting estimates are therefore
		\begin{align*}
			&\left\|\int^t_\frac t2 \frac{e^{h\Delta}-I}{h}(e^{-h\Omega\mathbb P J \mathbb P}-I)T_\Omega(t-\tau)\mathbb P\operatorname{div}(u(\tau)\otimes u(\tau))\, d\tau
			-\int^t_\frac t2\Delta T_\Omega(t-\tau)\mathbb P\operatorname{div}(u(\tau)\otimes u(\tau))\, d\tau\right\|_{\dot H^m}\\
			&\to0\quad\text{as $h\to0$}
		\end{align*}
	and
		\begin{align*}
			&\left\|\int^t_\frac t2 \frac{e^{-h\Omega\mathbb P J \mathbb P}-I}{h}T_\Omega(t-\tau)\mathbb P\operatorname{div}(u(\tau)\otimes u(\tau))\, d\tau
			+\int^t_\frac t2 \Omega \mathbb P J \mathbb PT_\Omega(t-\tau)\mathbb P\operatorname{div}(u(\tau)\otimes u(\tau))\, d\tau\right\|_{\dot H^m}\\
			&\to0\quad\text{as $h\to0.$}
		\end{align*}
	
	Combining the above estimates, we obtain
		\begin{align*}
			\lim_{h\downarrow0}\frac{u(t+h)-u(t)}{h}
			=-\mathbb P(u(t)\cdot\nabla)u(t)
			+\Delta u(t)
			-\Omega\mathbb P J \mathbb Pu(t)
		\end{align*}
	for
		$t>0$
	in
		$\dot H^m(\mathbb R^3)^3.$
	It is also possible to verify that the left-derivative of
		$u(t)$
	satisfies the same equation
:
		\begin{align*}
			\lim_{h\downarrow0}\frac{u(t)-u(t-h)}{h}
			=-\mathbb P(u(t)\cdot\nabla)u(t)
			+\Delta u(t)
			-\Omega\mathbb P J \mathbb Pu(t).
		\end{align*}
	As a result, it follows that the solution
		$u$
	satisfies
		$u\in C^1((0, \infty); \dot H^m(\mathbb R^3)^3)$
	and gives a classical solution of the Cauchy problem~\eqref{eq:3} in
		$\dot H^m(\mathbb R^3)^3.$
\qed

\begin{proposition}\label{prop:3.5}
	Let
		$0\leq s\leq t<\infty$
	be given. Then the solution
		$u$
	satisfies the energy equality
		\begin{align*}
			\left\|u(t)\right\|_{L^2}^2
			+2\int_s^t\left\|\nabla u(\tau)\right\|_{L^2}^2\, d\tau
			=\left\|u(s)\right\|_{L^2}^2.
		\end{align*}
\end{proposition}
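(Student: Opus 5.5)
First treat $0<s\leq t$ by a direct computation with the classical solution, then pass to $s=0$ using continuity at $t=0$.

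\emph{Case $s>0$.} By the preceding proposition, $u\in C^1((0,\infty);\dot H^m(\mathbb R^3)^3)$ for every $m\geq0$, and Lemma~\ref{lem:6} together with the $L^2$-continuity lemma gives $u\in C((0,\infty);L^2)$. Hence $u\in C^1((0,\infty);H^m)$ for all $m$; the equation holds in $H^m$ because $\dot H^m$ and $L^2$ convergence of difference quotients combine. Consequently $\tau\mapsto\|u(\tau)\|_{L^2}^2$ is $C^1$ on $(0,\infty)$, with
\begin{align*}
\frac{d}{d\tau}\|u(\tau)\|_{L^2}^2
=2\langle \partial_\tau u,u\rangle
=-2\langle \mathbb P(u\cdot\nabla)u,u\rangle+2\langle\Delta u,u\rangle-2\Omega\langle\mathbb PJ\mathbb Pu,u\rangle .
\end{align*}
Each of these terms vanishes or simplifies as follows.
\begin{itemize}
\item Coriolis term: $\mathbb P$ is self-adjoint and $J$ is skew-symmetric, so $\langle \mathbb PJ\mathbb Pu,u\rangle=\langle J\mathbb Pu,\mathbb Pu\rangle=0$.
\item Viscous term: $\langle\Delta u,u\rangle=-\|\nabla u\|_{L^2}^2$.
\item Nonlinear term: $u$ is divergence free, since it lies in the range of $\mathbb P$ by \eqref{eq:4} and $\operatorname{div}u_0=0$. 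Thus $\mathbb Pu=u$ and $\langle\mathbb P(u\cdot\nabla)u,u\rangle=\langle(u\cdot\nabla)u,u\rangle=\tfrac12\int u\cdot\nabla|u|^2\,dx=0$. Integrating by parts is legitimate because $u(\tau)\in H^m$ for all $m$, so $u$, $\nabla u\in L^2\cap L^\infty$ by Sobolev embedding.
\end{itemize}
Integrating $\frac{d}{d\tau}\|u\|_{L^2}^2=-2\|\nabla u\|_{L^2}^2$ from $s$ to $t$ gives the identity for $0<s\leq t$.

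\emph{Case $s=0$.} Let $s\downarrow0$. Continuity of $u$ at $t=0$ in $L^2$ (the preceding lemma, which uses $u_0\in H^s$) gives $\|u(s)\|_{L^2}^2\to\|u_0\|_{L^2}^2$. The integral $\int_s^t\|\nabla u\|_{L^2}^2\,d\tau$ is monotone in $s$, so by monotone convergence it converges to $\int_0^t\|\nabla u\|_{L^2}^2\,d\tau$. The limit is finite because it is bounded by $\tfrac12\|u_0\|_{L^2}^2$ through the identity for $s>0$.

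The only delicate point I expect is justifying the differentiation of $\|u(\tau)\|_{L^2}^2$, i.e.\ that $u$ is $C^1$ into $L^2$ and not merely into $\dot H^m$ with $m>0$. If the case $m=0$ of the preceding proposition does not cover this directly, I would rerun its difference-quotient argument for $m=0$. The only input needed there is the $\dot H^1$-continuity of $u\otimes u$ for $\tau>0$, which follows from Lemma~\ref{lem:6} and the $L^2\cap L^\infty$ bounds above.
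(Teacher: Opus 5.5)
Your proposal is correct and is essentially the paper's proof. You pair the equation with $u$ in $L^2$, use the skew-symmetry of $J$ (via $\langle \mathbb PJ\mathbb Pu,u\rangle=\langle J\mathbb Pu,\mathbb Pu\rangle=0$) and $\operatorname{div}u=0$ to remove the Coriolis and convection terms, and integrate over $(s,t)$, with rigor coming from $u\in C([0,\infty);H^s)\cap C^1((0,\infty);\dot H^m)$ for all $m\geq 0$. Your explicit limit $s\downarrow 0$ via $L^2$-continuity at $t=0$ only spells out what the paper leaves implicit. Your worry about $C^1$ into $L^2$ is unnecessary, since the preceding proposition already covers $m=0$.
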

\proof
	Once we take the 
		$L^2$-inner
	product of the first equation of \eqref{eq:3} and
		$u, $
	the divergence-free condition of
		$u$
	and the skew-symmetry of the operator
		$\mathbb PJ\mathbb P$
	yield
		\begin{align*}
			\frac12\frac{d}{d\tau}\left\|u(\tau)\right\|_{L^2}^2
			+\left\|\nabla u(\tau)\right\|_{L^2}^2
			=0
		\end{align*}
	for
		$\tau>0.$
	Then integrating the equality over 
		$(s, t)$
	gives the conclusion. We note that the calculation is rigorous, since we have
		\begin{align*}
			u\in C([0, \infty); H^s(\mathbb R^3)^3)
			\cap C^1((0, \infty); \dot H^m(\mathbb R^3)^3)
		\end{align*}
	for any
		$m\geq0.$
\qed

\section{Proof of Theorem~\ref{thm:3}: temporal decays}\label{sect:4}
	In this section, we prove the temporal decay estimates of the solution
		$u$.
	We derive the
		$L^q$-type, 
		$L^2$-type, and
		$L^\infty$-type estimates
	successively. Here, we additionally impose the hypothesis that the initial data
		$u_0$
	is an element of
		$L^1(\mathbb R^3)^3$
	, and sometimes we also impose
		$(1+|x|)u_0\in L^1(\mathbb R^3)^3$
	to obtain the additional decay
		$t^{-\frac12}.$
	For the exponents
		$m\geq 0, $
		$p\in[2, \infty], $
	and
		$j\in\{0, 1\}, $
	we define the functions
		$M^{m, p}_j(t)$
	and
		$U^{m, p}_j(t)$
	for
		$t\in(0, \infty)$
	by
		\begin{align*}
			M^{m, p}_j(t)
			\coloneqq t^{-\frac j2-\frac m2-\frac32(1-\frac 1p)}(1+|\Omega|t)^{-1+\frac2p}
		\end{align*}
	and
		\begin{align*}
			U^{m, p}_j(t)
			\coloneqq \sup_{0<\tau\leq t}
			|\Omega|^{1+\frac j2}\tau^{\frac j2+\frac m2+\frac32(1-\frac 1p)}
			(1+|\Omega|\tau)^{1-\frac2p}
			\left\|u(\tau)\right\|_{\dot W^{m, p}}.
		\end{align*}
	Then the proof of Theorem~\ref{thm:3} is rephrased as showing the boundedness the function
		$U^{m, p}_{j}$
	on
		$(0, \infty).$
	Note that
	$U^{m, p}_j(t)<\infty$
holds for all
	$t>0$
by Lemma~\ref{lem:6} and the interpolation inequality.

\subsection{$L^q$-type estimates}
We first consider the case
	$p=q, $
where
	$q$
is the exponent which appears in the statement of Theorem~\ref{thm:1}. 
\begin{proposition}\label{prop:10}
	Let
		$u$
	be the global solution obtained in Theorem~\ref{thm:1} and assume that the initial data
		$u_0$
	satisfies
		$u_0\in L^1(\mathbb R^3)^3\cap \dot H^s(\mathbb R^3)^3.$
	Then there are positive constants
		$C_{**}(\leq C_*)$
	and
		$C$
	depending only on
		$s, q, $
	and
		$\theta$
	such that if
		$|\Omega|^{-\frac12(s-\frac12)}\left\|u_0\right\|_{\dot H^s}
		\leq C_{**}, $
	the estimate
		\begin{align*}
			U^{0, q}_0(t)
			\leq C\left(|\Omega|\left\|u_0\right\|_{L^1}
			+|\Omega|\left\|u_0\right\|_{L^2}^4\right)
		\end{align*}
	holds for all
		$t>0.$
\end{proposition}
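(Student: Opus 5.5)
Since the stated bound is scale-invariant (Remark~\ref{rem:1.6}), I would prove it through a bootstrap inequality for $U^{0,q}_0(t)$ derived from the Duhamel formula \eqref{eq:4}. The goal is an inequality of the form
\begin{align*}
U^{0,q}_0(t)\leq C|\Omega|\left\|u_0\right\|_{L^1}+C\,\Phi\,U^{0,q}_0(t)+C|\Omega|\left\|u_0\right\|_{L^2}^4,
\end{align*}
where $\Phi$ is controlled by $|\Omega|^{-\frac12(s-\frac12)}\left\|u_0\right\|_{\dot H^s}$, and hence can be made $\leq\frac1{2C}$ by choosing $C_{**}$ small. Since $U^{0,q}_0(t)<\infty$ for every $t$ (Lemma~\ref{lem:6} and interpolation), the middle term can then be absorbed.

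\textbf{Linear term.} Apply Proposition~\ref{prop:2.2} with $s=0$, $p=q$ and source exponent $1$. Because $\mathbb Pu_0=u_0$, this gives $\|T_\Omega(t)u_0\|_{L^q}\lesssim M^{0,q}_0(t)\|u_0\|_{L^1}$, which contributes $C|\Omega|\|u_0\|_{L^1}$ to $U^{0,q}_0$.

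\textbf{Nonlinear term, split at $\tau=t/2$.}

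On $(0,t/2)$, put the divergence onto the kernel. Proposition~\ref{prop:2.2} from $L^1$ to $\dot W^{1,q}$ gives the factor $(t-\tau)^{-\frac12}M^{0,q}_0(t-\tau)\simeq t^{-\frac12}M^{0,q}_0(t)$, multiplied by $\int_0^{t/2}\|u(\tau)\|_{L^2}^2\,d\tau$. The energy equality (Proposition~\ref{prop:3.5}) bounds $\|u(\tau)\|_{L^2}$ by $\|u_0\|_{L^2}$, but the integral $t\|u_0\|_{L^2}^2$ beats $t^{-1/2}$ only for small $t$. To fix this I would interpolate: use $L^1\cap L^2$ for $u_0$ and the $L^q$ weighted quantity for $u$. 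Write $\|u\otimes u\|_{L^1}\leq\|u\|_{L^2}^2$, with $\|u(\tau)\|_{L^2}$ interpolated between $\|u_0\|_{L^2}$ and a decaying bound obtained via Hölder from $U^{0,q}_0$. Then Young/Hölder in time with the $L^\theta$ norm produces a term in which $U^{0,q}_0$ appears linearly, times a small factor. Any remaining term quadratic in $U^{0,q}_0$ is handled by Young's inequality $ab\leq\epsilon a^2+C_\epsilon b^2$. I expect this to be the source of the $\|u_0\|_{L^2}^4$ term: it arises when a term like $\|u_0\|_{L^2}^2\,U^{0,q}_0{}^{1/2}$ is absorbed, and the powers of $|\Omega|$ match by scaling.

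On $(t/2,t)$, follow the proof of Lemma~\ref{lem:5}. Estimate $\|T_\Omega(t-\tau)\mathbb P\operatorname{div}(u\otimes u)\|_{L^q}$ from $L^{r}$ with $\frac1r=\frac2q-\frac s3$, using Proposition~\ref{prop:2.5} together with Proposition~\ref{prop:2.2} with $s\to 1-s$ derivatives. This bounds the contribution by $\|u(\tau)\|_{\dot W^{s,q}}\|u(\tau)\|_{L^{q}}$-type products. One factor is $\|u(\tau)\|_{L^q}\leq U^{0,q}_0(t)|\Omega|^{-1}M^{0,q}_0(\tau)$ with $M^{0,q}_0(\tau)\simeq M^{0,q}_0(t)$. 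The remaining time integral of $(t-\tau)^{-a}(1+|\Omega|(t-\tau))^{-1+\frac2q}\|u\|_{\dot W^{s,q}}$ is bounded by Hölder, using the conditions \eqref{eq:5}--\eqref{eq:6} exactly as in Lemma~\ref{lem:5}, by $C|\Omega|^{\cdots}\|u\|_{L^\theta\dot W^{s,q}}\lesssim|\Omega|^{-\frac12(s-\frac12)}\|u_0\|_{\dot H^s}$ (Theorem~\ref{thm:1}). This gives the small coefficient $\Phi$.

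\textbf{Main obstacle.} The hardest part will be the $(0,t/2)$ piece: getting the time decay right without losing integrability near $\tau=0$, while keeping all powers of $|\Omega|$ scale-consistent. I would first try the $L^1\to L^q$ estimate with $\|u\|_{L^2}^2$ and the energy bound, splitting further according to whether $|\Omega|t\leq1$ or $|\Omega|t\geq1$.
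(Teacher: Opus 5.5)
Your linear term and your $(t/2,t)$ estimate match the paper: one factor is in $L^q$ and weighted by $U^{0,q}_0$, the other is in $\dot W^{s,q}$, and you apply H\"older in time against $\|u\|_{L^\theta\dot W^{s,q}}$. You also correctly guess that $|\Omega|\|u_0\|_{L^2}^4$ comes from absorbing a sublinear power of $U^{0,q}_0$. The gap is the $(0,t/2)$ piece, which you leave as the main obstacle, and none of the mechanisms you sketch for it works.

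\begin{itemize}
\item There is no decaying bound for $\|u(\tau)\|_{L^2}$ obtained ``via H\"older from $U^{0,q}_0$''. Since $q>2$, the $L^q$ norm does not control the $L^2$ norm on $\mathbb R^3$, and no $L^1$-type bound on $u(\tau)$ is available.
\item H\"older in time against $\|u\|_{L^\theta\dot W^{s,q}}$ on $(0,t/2)$ would need $\tau^{-\frac32(1-\frac1q)}\in L^{\theta'}(0,t/2)$, i.e.\ $\frac32(1-\frac1q)<1-\frac1\theta$. Combined with $\frac1\theta\ge\frac32(\frac12-\frac1q)$, this is impossible when $\frac1q\le\frac5{12}$, a case \eqref{eq:5} allows for $s\le\frac34$.
\item Only $|\Omega|^{-\frac12(s-\frac12)}\|u_0\|_{\dot H^s}$ is small. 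So a term linear in $U^{0,q}_0$ whose coefficient involves $|\Omega|^{\frac14}\|u_0\|_{L^2}$ cannot be absorbed.
\item Splitting at $|\Omega|t=1$ does not remove the $(|\Omega|t)^{\frac12}$ growth of the $L^1$-source bound at large $t$.
\end{itemize}

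The missing idea is to change the source space on $(0,t/2)$. Use $\|u\otimes u\|_{L^{r_0}}\le\|u\|_{L^2}\|u\|_{L^q}$ with $\frac1{r_0}=\frac12+\frac1q$. Then apply Proposition~\ref{prop:2.2} (with one derivative) from $L^{r_0}$ to $L^q$; this is admissible because $r_0\le q'$, i.e.\ $q\le4$. The kernel factor becomes $(t-\tau)^{-\frac54}(1+|\Omega|(t-\tau))^{-1+\frac2q}$. Insert $\|u(\tau)\|_{L^2}\le\|u_0\|_{L^2}$ and $\|u(\tau)\|_{L^q}\le|\Omega|^{-1}U^{0,q}_0(t)\tau^{-\frac32(1-\frac1q)}$, which is integrable at $0$ because $q<3$. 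This gives
$|\Omega|^{-1}M^{0,q}_0(t)(|\Omega|t)^{-\frac14}\,|\Omega|^{\frac14}\|u_0\|_{L^2}\,U^{0,q}_0(t)$.
Your own $L^1$-source bound is $|\Omega|^{-1}M^{0,q}_0(t)(|\Omega|t)^{\frac12}(|\Omega|^{\frac14}\|u_0\|_{L^2})^2$. The geometric mean of the two, with weights $\frac13$ on yours and $\frac23$ on the new one, cancels the powers of $|\Omega|t$ and yields
$|\Omega|^{-1}M^{0,q}_0(t)(|\Omega|^{\frac14}\|u_0\|_{L^2})^{\frac43}U^{0,q}_0(t)^{\frac23}$.
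Young's inequality then absorbs this into the left-hand side and produces exactly $|\Omega|\|u_0\|_{L^2}^4$.

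Your instinct to feed decay of $\|u(\tau)\|_{L^2}$ into $\int_0^{t/2}\|u\|_{L^2}^2\,d\tau$ can be rescued only by bootstrapping $U^{0,2}_0$ jointly with $U^{0,q}_0$, using the estimates of Proposition~\ref{prop:13}. That is a different argument from the one you wrote.
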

\proof
By the triangle inequality we have
	\begin{align*}
		\left\|u(t)\right\|_{L^q}
		&\leq \left\|T_\Omega(t)u_0\right\|_{L^q}
		+\int_0^\frac t2\left\|T_\Omega(t-\tau)\mathbb P\operatorname{div}(u(\tau)\otimes u(\tau))\right\|_{L^q}\, d\tau\\
		&+\int_\frac t2^t\left\|T_\Omega(t-\tau)\mathbb P\operatorname{div}(u(\tau)\otimes u(\tau))\right\|_{L^q}\, d\tau
	\end{align*}
and we treat the estimates of each term on the right-hand side. 

Applying Proposition~\ref{prop:2.2}, we obtain the linear decay
	\begin{align*}
		\left\|T_\Omega(t)u_0\right\|_{L^q}
		\lesssim t^{-\frac32(1-\frac1q)}(1+|\Omega|t)^{-1+\frac 2q}\left\|u_0\right\|_{L^1}
		=M^{0, q}_0(t)\left\|u_0\right\|_{L^1}.
	\end{align*}
	
For the first half of the Duhamel integral, by Proposition~\ref{prop:2.2} we have
	\begin{align*}
		&\int_0^\frac t2\left\|T_\Omega(t-\tau)\mathbb P\operatorname{div}(u(\tau)\otimes u(\tau))\right\|_{L^q}\, d\tau\\
		&\lesssim \int_0^\frac t2(1+|\Omega|(t-\tau))^{-1+\frac2q}(t-\tau)^{-\frac32(1-\frac 1q)-\frac 12}\left\|u(\tau)\otimes u(\tau)\right\|_{L^1}\, d\tau\\
		&\lesssim M^{0, q}_0(t)t^{-\frac 12}\int_0^\frac t2\left\|u(\tau)\right\|_{L^2}\, d\tau
		\lesssim M^{0, q}_0(t)t^{\frac12}\left\|u_0\right\|_{L^2}^2.
	\end{align*}
Since the assumption~\eqref{eq:5} ensures that
	$q\in(2, 3), $
we may also derive
	\begin{align*}
		&\int_0^\frac t2\left\|T_\Omega(t-\tau)\mathbb P\operatorname{div}(u(\tau)\otimes u(\tau))\right\|_{L^q}\, d\tau\\
		&\lesssim \int_0^\frac t2(1+|\Omega|(t-\tau))^{-1+\frac2q}(t-\tau)^{-\frac32(1-\frac 2q)-\frac 12}\left\|e^{\frac12(t-\tau)\Delta}(u(\tau)\otimes u(\tau))\right\|_{L^{q'}}\, d\tau\\
		&\lesssim \int_0^\frac t2(1+|\Omega|(t-\tau))^{-1+\frac2q}(t-\tau)^{-\frac32(1-\frac 2q)-\frac 12-\frac32(\frac12+\frac1q-1+\frac1q)}\left\|u(\tau)\right\|_{L^q}\left\|u(\tau)\right\|_{L^2}\, d\tau\\
		&\lesssim (1+|\Omega|t)^{-1+\frac2q}t^{-\frac54}\left\|u_0\right\|_{L^2}
		\int_0^\frac t2\left\|u(\tau)\right\|_{L^q}\, d\tau\\
		&\lesssim (1+|\Omega|t)^{-1+\frac2q}t^{-\frac54}\left\|u_0\right\|_{L^2}|\Omega|^{-1}U^{0, q}_0(t)
		\int_0^\frac t2\tau^{-\frac32(1-\frac1q)}\, d\tau\\
		&\lesssim M^{0, q}_0(t)t^{-\frac14}\left\|u_0\right\|_{L^2}|\Omega|^{-1}U^{0, q}_0(t), 
	\end{align*}
since we have
	$\frac12+\frac1q-1+\frac1q>0$
and
	$\frac32(1-\frac1q)<1.$
By interpolation, we consequently have
	\begin{align*}
		&\int_0^\frac t2\left\|T_\Omega(t-\tau)\mathbb P\operatorname{div}(u(\tau)\otimes u(\tau))\right\|_{L^q}\, d\tau\\
		&\lesssim M^{0, q}_0(t)\left\|u_0\right\|_{L^2}\left\|u_0\right\|_{L^2}^\frac13|\Omega|^{-\frac23}U^{0, q}_0(t)^\frac23
		=|\Omega|^{-1}M^{0, q}_0(t)\left(|\Omega|^{\frac14}\left\|u_0\right\|_{L^2}\right)^\frac43U^{0, q}_0(t)^\frac23.
	\end{align*}

The treatment of the second half of the Duhamel integral is the same as in \citep[Lemma~5]{AKL22} and \citep[Lemma~4.3]{ET23}. 
Indeed, since the conditions
	$\frac2q-\frac s3\geq 1-\frac1q$
and
	$\frac12-\frac32(\frac1q-\frac s3)-(1-\frac2q)
	<\frac1\theta
	< \frac12-\frac32(\frac1q-\frac s3)$
hold, 
we have
	\begin{align*}
		&\int_\frac t2^t\left\|T_\Omega(t-\tau)\mathbb P\operatorname{div}(u(\tau)\otimes u(\tau))\right\|_{L^q}\, d\tau\\
		&\lesssim \int_\frac t2^t (1+|\Omega|(t-\tau))^{-1+\frac 2q}(t-\tau)^{-\frac12-\frac32(1-\frac2q)}
		\left\|e^{\frac12(t-\tau)\Delta}(u(\tau)\otimes u(\tau))\right\|_{L^{q'}}\, d\tau\\
		&\lesssim \int_\frac t2^t (1+|\Omega|(t-\tau))^{-1+\frac 2q}(t-\tau)^{-\frac12-\frac32(1-\frac2q+\frac2q-\frac s3-1+\frac1q)}
		\left\|u(\tau)\right\|_{L^q} \left\|u(\tau)\right\|_{\dot W^{s, q}}\, d\tau\\
		&\lesssim |\Omega|^{-1}M^{0, q}_0(t)U^{0, q}_0(t)
		\int_\frac t2^t (1+|\Omega|(t-\tau))^{-1+\frac 2q}(t-\tau)^{-\frac12-\frac32(\frac1q-\frac s3)}
		 \left\|u(\tau)\right\|_{\dot W^{s, q}}\, d\tau\\
		 &\leq |\Omega|^{-1}M^{0, q}_0(t)U^{0, q}_0(t)
		 \left\|u\right\|_{L^\theta([0, \infty); \dot W^{q, s})}
		 \left(\int_\frac t2^t\left( (1+|\Omega|(t-\tau))^{-1+\frac 2q}(t-\tau)^{-\frac12-\frac32(\frac1q-\frac s3)}\right)^{\frac\theta{\theta-1}}\, d\tau\right)^{1-\frac1\theta}\\
		 &\lesssim |\Omega|^{-1}M^{0, q}_0(t)U^{0, q}_0(t)
		 |\Omega|^{-\frac12(s-\frac12)}\left\|u_0\right\|_{\dot H^s}.
	\end{align*}
Putting the above estimates altogether, we obtain
	\begin{align*}
		\left\|u(t)\right\|_{L^q}
		\lesssim |\Omega|^{-1}M^{0, q}_0(t)\left(|\Omega|\left\|u_0\right\|_{L^1}
		+\left(|\Omega|^{\frac14}\left\|u_0\right\|_{L^2}\right)^\frac43U^{0, q}_0(t)^\frac23
		+U^{0, q}_0(t)
		 |\Omega|^{-\frac12(s-\frac12)}\left\|u_0\right\|_{\dot H^s}\right),
	\end{align*}
and thus by taking
	$|\Omega|^{-\frac12(s-\frac12)}\left\|u_0\right\|_{\dot H^s}$
sufficiently small, we have
	\begin{align*}
		U^{0, q}_0(t)
		\lesssim 
		|\Omega|\left\|u_0\right\|_{L^1}
		+\left(|\Omega|^{\frac14}\left\|u_0\right\|_{L^2}\right)^\frac43U^{0, q}_0(t)^\frac23.
	\end{align*}
This estimate in turn yields
	\begin{align*}
		U^{0, q}_0(t)
		\lesssim |\Omega|\left\|u_0\right\|_{L^1}
		+|\Omega|\left\|u_0\right\|_{L^2}^4.
	\end{align*}
This completes the proof.
\qed

\begin{proposition}\label{prop:11}
	Let
		$u$
	be the global solution obtained in Theorem~\ref{thm:1} and assume that the initial data
		$u_0$
	satisfies
		$u_0\in L^1(\mathbb R^3)^3\cap \dot H^s(\mathbb R^3)^3.$
	Then there are positive constants
		$C_{\star}(\leq C_{**})$
	and
		$C$
	depending only on
		$s, q, $
	and
		$\theta$
	such that if
		$|\Omega|^{-\frac12(s-\frac12)}\left\|u_0\right\|_{\dot H^s}
		\leq C_\star, $
	the estimate
		\begin{align*}
			U^{1, q}_0(t)
			\leq C\left(|\Omega|\left\|u_0\right\|_{L^1}
			+\left(|\Omega|^\frac14\left\|u_0\right\|_{L^2}\right)^\frac43
			U^{0, q}_0(t)^\frac{2}{3}\right)
		\end{align*}
	holds for all
		$t>0.$
\end{proposition}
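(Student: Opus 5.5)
We follow the proof of Proposition~\ref{prop:10} and split the Duhamel formula at $\tau=t/2$:
\begin{align*}
\left\|u(t)\right\|_{\dot W^{1, q}}
&\leq \left\|T_\Omega(t)u_0\right\|_{\dot W^{1, q}}
+\int_0^{\frac t2}\left\|T_\Omega(t-\tau)\mathbb P\operatorname{div}(u\otimes u)(\tau)\right\|_{\dot W^{1, q}}\, d\tau\\
&\quad+\int_{\frac t2}^t\left\|T_\Omega(t-\tau)\mathbb P\operatorname{div}(u\otimes u)(\tau)\right\|_{\dot W^{1, q}}\, d\tau .
\end{align*}
The goal is to bound everything by $|\Omega|^{-1}M^{1,q}_0(t)$ times the claimed quantities. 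A small multiple of $U^{1,q}_0(t)$ will appear, and we absorb it into the left-hand side. This is legitimate because $U^{1,q}_0(t)<\infty$ for each $t$.

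\emph{Linear term.} Proposition~\ref{prop:2.2} with $s=1$, $p=q$ and exponent $1$ gives directly
\[
\left\|T_\Omega(t)u_0\right\|_{\dot W^{1,q}}\lesssim M^{1,q}_0(t)\left\|u_0\right\|_{L^1}.
\]

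\emph{First half of the Duhamel term.} The full derivative falls on the kernel, since $t-\tau\ge t/2$. We repeat the two bounds of Proposition~\ref{prop:10}, each now carrying an extra factor $(t-\tau)^{-1/2}\sim t^{-1/2}$.
\begin{itemize}
\item The $L^1$ route, using $\left\|u\otimes u\right\|_{L^1}\le\left\|u_0\right\|_{L^2}^2$ (energy equality, Proposition~\ref{prop:3.5}), gives $M^{1,q}_0(t)\,t^{1/2}\left\|u_0\right\|_{L^2}^2$.
\item The $L^{q'}$ route, using $\left\|u\right\|_{L^q}\left\|u\right\|_{L^2}$ together with $U^{0,q}_0$, gives $M^{1,q}_0(t)\,t^{-1/4}\left\|u_0\right\|_{L^2}|\Omega|^{-1}U^{0,q}_0(t)$.
\end{itemize}
Interpolating these with weights $1/3$ and $2/3$ removes the power of $t$, exactly as in Proposition~\ref{prop:10}, and yields
\[
|\Omega|^{-1}M^{1,q}_0(t)\bigl(|\Omega|^{1/4}\left\|u_0\right\|_{L^2}\bigr)^{4/3}U^{0,q}_0(t)^{2/3}.
\]
This is the second term of the claimed bound.

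\emph{Second half of the Duhamel term (the main obstacle).} Here the derivative must be shared. We use Proposition~\ref{prop:2.2} with $s=1$ applied to $\mathbb P\operatorname{div}$, i.e.\ one derivative on $u\otimes u$ and the divergence on the kernel. The fractional Leibniz rule (Proposition~\ref{prop:2.5}) then splits $\left\|u\otimes u\right\|_{\dot W^{1,\cdot}}$ into a product of the form $\left\|u\right\|_{\dot W^{1,q}}\cdot\left\|u\right\|_{\dot W^{s,q}}$ (after Sobolev embedding). We choose the exponents exactly as in the second-half estimate of Proposition~\ref{prop:10}, with the same kernel $(1+|\Omega|(t-\tau))^{-1+2/q}(t-\tau)^{-\frac12-\frac32(\frac1q-\frac s3)}$, so that:
\begin{itemize}
\item the factor $\left\|u(\tau)\right\|_{\dot W^{1,q}}\le|\Omega|^{-1}M^{1,q}_0(\tau)U^{1,q}_0(t)\sim|\Omega|^{-1}M^{1,q}_0(t)U^{1,q}_0(t)$ on $[t/2,t]$;
\item the remaining factor $\left\|u(\tau)\right\|_{\dot W^{s,q}}$ is handled by H\"older in time against $L^\theta$, using the admissibility condition on $1/\theta$ and Theorem~\ref{thm:1}.
\end{itemize}
This gives
\[
C\,|\Omega|^{-1}M^{1,q}_0(t)\,U^{1,q}_0(t)\,|\Omega|^{-\frac12(s-\frac12)}\left\|u_0\right\|_{\dot H^s}.
\]
The delicate point is checking that the Leibniz rule applies with $q'$-type exponents inside $(1,\infty)$, and that the time kernel is $L^{\theta'}$-integrable with the correct $|\Omega|$ scaling. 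Both should follow from \eqref{eq:5}--\eqref{eq:6}, exactly as in \citep[Lemma~4.3]{ET23}.

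\emph{Conclusion.} Choosing $C_\star$ small makes the coefficient $C\,|\Omega|^{-\frac12(s-\frac12)}\left\|u_0\right\|_{\dot H^s}\le\frac12$. Absorbing the $U^{1,q}_0(t)$ term into the left-hand side then gives the claimed bound.
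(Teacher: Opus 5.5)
Your proposal is correct and follows the paper's proof essentially step for step. It uses the same splitting at $\tau=t/2$, treats the linear and first-half terms as in Proposition~\ref{prop:10} with the extra factor $t^{-1/2}$ and the $\frac13$--$\frac23$ interpolation, and handles the second half by putting the divergence on the kernel and $(-\Delta)^{1/2}$ on $u\otimes u$, followed by Leibniz/Sobolev to reach $\|u\|_{\dot W^{1,q}}\|u\|_{\dot W^{s,q}}$, H\"older in time against $L^\theta$, and absorption for small $C_\star$. The exponent checks you deferred are the same ones already used in Proposition~\ref{prop:10}: namely $1<r\le q'$ with $\frac1r=\frac2q-\frac s3$, guaranteed by \eqref{eq:5}, and the two-sided bound on $\frac1\theta$ coming from \eqref{eq:5}--\eqref{eq:6}.
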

\proof
We write
	\begin{align*}
		\left\|u(t)\right\|_{\dot W^{1, q}}
		&\leq \left\|T_\Omega(t)u_0\right\|_{\dot W^{1, q}}
		+\int_0^\frac t2\left\|T_\Omega(t-\tau)\mathbb P\operatorname{div}(u(\tau)\otimes u(\tau))\right\|_{\dot W^{1, q}}\, d\tau\\
		&+\int_\frac t2^t\left\|T_\Omega(t-\tau)\mathbb P\operatorname{div}(u(\tau)\otimes u(\tau))\right\|_{\dot W^{1, q}}\, d\tau, 
	\end{align*}
and thanks to the smoothing property of the heat kernel, as in Proposition~\ref{prop:10}, we obtain
	\begin{align*}
		&\left\|T_\Omega(t)u_0\right\|_{\dot W^{1, q}}
		+\int_0^\frac t2\left\|T_\Omega(t-\tau)\mathbb P\operatorname{div}(u(\tau)\otimes u(\tau))\right\|_{\dot W^{1, q}}\, d\tau\\
		&\lesssim |\Omega|^{-1}M^{1, q}_0(t)\left(\left\|u_0\right\|_{L^1}
		+\left(|\Omega|^\frac14\left\|u_0\right\|_{L^2}\right)^\frac43
			U^{0, q}_0(t)^\frac{2}{3}\right).
	\end{align*}
For the remaining term, an estimate similar to one in the proof of Proposition~\ref{prop:10} is enough to have
	\begin{align*}
		&\int_\frac t2^t\left\|T_\Omega(t-\tau)\mathbb P\operatorname{div}(u(\tau)\otimes u(\tau))\right\|_{\dot W^{1, q}}\, d\tau\\
		&\lesssim \int_\frac t2^t (1+|\Omega|(t-\tau))^{-1+\frac 2q}(t-\tau)^{-\frac12-\frac32(1-\frac2q)}
		\left\|e^{\frac12(t-\tau)\Delta}(-\Delta)^\frac12(u(\tau)\otimes u(\tau))\right\|_{L^{q'}}\, d\tau\\
		&\lesssim \int_\frac t2^t (1+|\Omega|(t-\tau))^{-1+\frac 2q}(t-\tau)^{-\frac12-\frac32(1-\frac2q+\frac2q-\frac s3-1+\frac1q)}
		\left\|u(\tau)\right\|_{\dot W^{1, q}} \left\|u(\tau)\right\|_{\dot W^{s, q}}\, d\tau\\
		 &\lesssim |\Omega|^{-1}M^{1, q}_0(t)U^{1, q}_0(t)
		 |\Omega|^{-\frac12(s-\frac12)}\left\|u_0\right\|_{\dot H^s}.
	\end{align*}
Therefore by taking
	$|\Omega|^{-\frac12(s-\frac12)}\left\|u_0\right\|_{\dot H^s}$
sufficiently small, we obtain the desired estimate
	\begin{align*}
		U^{1, q}_0(t)
			\leq C\left(|\Omega|\left\|u_0\right\|_{L^1}
			+\left(|\Omega|^\frac14\left\|u_0\right\|_{L^2}\right)^\frac43
			U^{0, q}_0(t)^\frac{2}{3}\right).
	\end{align*}
\qed

\begin{proposition}
	Let
		$u$
	be the global solution obtained in Theorem~\ref{thm:1} and assume that the initial data
		$u_0$
	satisfies
		$u_0\in L^1(\mathbb R^3)^3\cap \dot H^s(\mathbb R^3)^3$
	and
		$|\Omega|^{-\frac12(s-\frac12)}\left\|u_0\right\|_{\dot H^s}
		\leq C_\star.$
	Then for any
		$m\geq 0$
	there exists a constant
		$C>0$
	depending only on
		$m, $
		$s, $
		$q, $
		$\theta, $
		$|\Omega|\left\|u_0\right\|_{L^1}, $
		$|\Omega|^\frac14\left\|u_0\right\|_{L^2}, $
	and
		$|\Omega|^{-\frac12(s-\frac12)}\left\|u_0\right\|_{\dot H^s}, $
	satisfying
		$U^{1+m, q}_0(t)\leq C$
	for all
		$t>0.$
\end{proposition}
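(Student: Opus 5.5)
We argue by induction in $m$, mirroring the bootstrapping device of Lemma~\ref{lem:5}. Fix a small step $\alpha\in(0,1]$ and assume we already know that $U^{1+k\alpha, q}_0$ is bounded on $(0,\infty)$ by a constant of the admissible type, for some $k\ge 0$. The base case $k=0$ is given by Propositions~\ref{prop:10} and \ref{prop:11}: $U^{0,q}_0$ is bounded, hence so is $U^{1,q}_0$. The step $\alpha$ is chosen small enough that the time exponents below remain integrable, in the same way $N$ was chosen in Lemma~\ref{lem:5}. Since intermediate values of $m$ follow by interpolation between $U^{0,q}_0$ and $U^{1+k\alpha,q}_0$, it suffices to bound $U^{1+(k+1)\alpha,q}_0$.

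As before, we split $\|u(t)\|_{\dot W^{1+(k+1)\alpha,q}}$ into three pieces: the linear term, the Duhamel integral over $(0,t/2)$, and the Duhamel integral over $(t/2,t)$.

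\emph{Linear term and $(0,t/2)$ integral.} For the linear term, Proposition~\ref{prop:2.2} with $q=1$ gives the bound $M^{1+(k+1)\alpha,q}_0(t)\|u_0\|_{L^1}$. For the integral over $(0,t/2)$ we have $t-\tau\sim t$, so the heat smoothing supplies the extra factor $t^{-\frac{1+(k+1)\alpha}{2}}$. The estimate then proceeds exactly as the first-half estimate in Proposition~\ref{prop:10}. This yields
\[
|\Omega|^{-1}M^{1+(k+1)\alpha,q}_0(t)\bigl(|\Omega|^{\frac14}\|u_0\|_{L^2}\bigr)^{\frac43}U^{0,q}_0(t)^{\frac23},
\]
which is already bounded.

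\emph{The $(t/2,t)$ integral.} We place $(-\Delta)^{\frac\alpha2}$ on the kernel and the remaining $1+k\alpha$ derivatives on the product. Proposition~\ref{prop:2.2} and the heat smoothing $L^{r}\to L^{q'}$ are used as in Proposition~\ref{prop:11}. The fractional Leibniz rule (Proposition~\ref{prop:2.5}) gives
\[
\|u\otimes u\|_{\dot W^{1+k\alpha,r}}\lesssim \|u\|_{\dot W^{1+k\alpha,q}}\|u\|_{\dot W^{s,q}},\qquad \frac1r=\frac2q-\frac s3,
\]
after a Sobolev embedding $\dot W^{s,q}\hookrightarrow L^{(1/q-s/3)^{-1}}$. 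The kernel factor becomes
\[
(1+|\Omega|(t-\tau))^{-1+\frac2q}(t-\tau)^{-\frac12-\frac\alpha2-\frac32(\frac1q-\frac s3)}.
\]
We bound $\|u(\tau)\|_{\dot W^{1+k\alpha,q}}\le |\Omega|^{-1}M^{1+k\alpha,q}_0(\tau)U^{1+k\alpha,q}_0(t)$. For $\tau\in(t/2,t)$ this gives
\[
M^{1+k\alpha,q}_0(\tau)\sim t^{\frac\alpha2}M^{1+(k+1)\alpha,q}_0(t).
\]
Applying H\"older in time against $\|u\|_{L^\theta\dot W^{s,q}}$, the remaining factor is
\[
t^{\frac\alpha2}\Bigl(\int_0^{t/2}\bigl[(1+|\Omega|\sigma)^{-1+\frac2q}\sigma^{-\frac12-\frac\alpha2-\frac32(\frac1q-\frac s3)}\bigr]^{\theta'}d\sigma\Bigr)^{1/\theta'}.
\]

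\emph{Main obstacle.} This last factor is where the argument is delicate, because it is not uniform in $t$. If we use the integrability at both ends, it is $\lesssim t^{\alpha/2}|\Omega|^{\frac\alpha2+\cdots}$, which is unbounded as $t\to\infty$.

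To fix this, we split differently: the \emph{known} bound $U^{1+k\alpha,q}_0$ is only used with the scaling-exact weight. Concretely, we bound the integral over $\sigma\in(0,t/2)$ by a constant times $t^{-\frac\alpha2}\cdot t^{\frac\alpha2}$, using the local-in-time piece for $|\Omega|\sigma\le1$ and the decaying piece otherwise. The exponent condition from Lemma~\ref{lem:5},
\[
\frac12-\frac32\Bigl(\frac1q-\frac s3\Bigr)-\Bigl(1-\frac2q\Bigr)-\frac\alpha2<\frac1\theta<\frac12-\frac32\Bigl(\frac1q-\frac s3\Bigr)-\frac\alpha2,
\]
makes both ends converge. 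Writing $(1+|\Omega|\sigma)^{-1+\frac2q}\le(|\Omega|\sigma)^{-\beta}$ for a suitable $\beta\in[0,1-\frac2q]$ that balances the homogeneity, we obtain exactly $t^{0}$ times $|\Omega|^{-\frac12(s-\frac12)}$-type powers.

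If this balancing fails, we fall back on interpolation. We bound $\|u\|_{\dot W^{1+k\alpha,q}}$ on $(t/2,t)$ via the finite quantity $U^{1+(k+1)\alpha,q}_0(t)$ itself, interpolated with $U^{0,q}_0$. Smallness of $|\Omega|^{-\frac12(s-\frac12)}\|u_0\|_{\dot H^s}\le C_\star$ then lets us absorb the term into the left-hand side, as in Proposition~\ref{prop:11}; the finiteness of $U^{m,q}_0(t)$ needed for this absorption is noted after the definition of $U^{m,p}_j$. Iterating $k$ up to $m/\alpha$ concludes the proof.
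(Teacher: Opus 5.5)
Your diagnosis of the obstacle is right, but neither of your proposed remedies works.

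\textbf{The balancing step.} After H\"older against $\|u\|_{L^\theta([0,\infty);\dot W^{s,q})}$, consider the factor $t^{\frac\alpha2}\bigl(\int_0^{t/2}[(1+|\Omega|\sigma)^{-1+\frac2q}\sigma^{-a}]^{\theta'}\,d\sigma\bigr)^{1/\theta'}$, with $a=\frac12+\frac\alpha2+\frac32(\frac1q-\frac s3)$. It is nondecreasing in $t$, and the integral tends to a positive finite limit, so the factor grows like $t^{\frac\alpha2}$. No re-splitting of the $\sigma$-integral can change that. Concretely, suppose you bound $(1+|\Omega|\sigma)^{-1+\frac2q}\le(|\Omega|\sigma)^{-\beta}$ and choose $\beta$ so that the power of $t$ cancels. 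You are then forced to $(a+\beta)\theta'=1+\frac{\alpha\theta'}2>1$, and the integral diverges at $\sigma=0$. The global $L^\theta_t$ norm carries no decay information, so this route cannot close.

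\textbf{The fallback.} Interpolating $\|u(\tau)\|_{\dot W^{1+k\alpha,q}}$ between $U^{0,q}_0$ and $U^{1+(k+1)\alpha,q}_0$ keeps the weight $M^{1+k\alpha,q}_0(\tau)$. The same growing factor then multiplies $U^{1+(k+1)\alpha,q}_0(t)^{\lambda}$, and nothing can be absorbed uniformly in $t$. If instead you move all $1+(k+1)\alpha$ derivatives onto $u\otimes u$, as in Proposition~\ref{prop:11}, the absorption coefficient contains the constant of Proposition~\ref{prop:2.5} at order $1+m$. You would then need a smallness threshold depending on $m$. The statement fixes $C_\star$ independently of $m$, and avoiding any absorption for $m>0$ is exactly why the induction is there.

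\textbf{The missing idea.} You need a second, decay-based bound for the same time integral, and then to interpolate the two bounds. Since $0<s<1$, $U^{s,q}_0$ is bounded by interpolating Propositions~\ref{prop:10} and \ref{prop:11}. Inserting $\|u(\tau)\|_{\dot W^{s,q}}\le|\Omega|^{-1}M^{s,q}_0(\tau)U^{s,q}_0(t)$ gives
\begin{align*}
\int_{\frac t2}^t(1+|\Omega|(t-\tau))^{-1+\frac2q}(t-\tau)^{-a}\|u(\tau)\|_{\dot W^{s,q}}\,d\tau
\lesssim|\Omega t|^{-1}t^{-\frac\alpha2}U^{s,q}_0(t).
\end{align*}
Measure both bounds relative to $|\Omega|^{-1}M^{1+(k+1)\alpha,q}_0(t)U^{1+k\alpha,q}_0(t)$:
\begin{itemize}
\item your H\"older bound is $(|\Omega|t)^{\frac\alpha2}A$, with $A\sim|\Omega|^{-\frac12(s-\frac12)}\|u_0\|_{\dot H^s}$;
\item the new bound is $(|\Omega|t)^{-1}B$, with $B\sim U^{s,q}_0(t)$.
\end{itemize}
Their minimum is at most $A^{\frac2{2+\alpha}}B^{\frac\alpha{2+\alpha}}$ for every $t>0$. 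The inductive step then closes with only already-bounded quantities on the right, with no absorption and no new smallness condition. This is how the paper argues.
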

\proof
The proof is based on inductive arguments which is already used in Lemma~\ref{lem:5}. We beforehand take a sufficiently small number
	$\alpha\in(0, 1)$
satisfying
	\begin{align*}
		\frac12-\frac\alpha2-\frac32\left(\frac1q-\frac s3\right)-\left(1-\frac2q\right)
		<\frac1\theta
		<\frac12-\frac\alpha2-\frac32\left(\frac1q-\frac s3\right). 
	\end{align*}
Then we show that
	$U^{1+k\alpha, q}_0(t)
	\leq C
	$
holds for all
	$t>0$
and
	$k\in\mathbb N\cup\{0\}$
by induction of 
	$k$.
The base case
	$k=0$
follows by Proposition~\ref{prop:11}. We thus consider the case of
	$k\geq 1.$

As usual, we write
	\begin{align*}
		\left\|u(t)\right\|_{\dot W^{1+k\alpha, q}}
		&\leq \left\|T_\Omega(t)u_0\right\|_{\dot W^{1+k\alpha, q}}
		+\int_0^\frac t2\left\|T_\Omega(t-\tau)\mathbb P\operatorname{div}(u(\tau)\otimes u(\tau))\right\|_{\dot W^{1+k\alpha, q}}\, d\tau\\
		&+\int_\frac t2^t\left\|T_\Omega(t-\tau)\mathbb P\operatorname{div}(u(\tau)\otimes u(\tau))\right\|_{\dot W^{1+k\alpha, q}}\, d\tau, 
	\end{align*}
and by the smoothing property of the heat kernel, we obtain
	\begin{align*}
		&\left\|T_\Omega(t)u_0\right\|_{\dot W^{1+k\alpha, q}}
		+\int_0^\frac t2\left\|T_\Omega(t-\tau)\mathbb P\operatorname{div}(u(\tau)\otimes u(\tau))\right\|_{\dot W^{1+k\alpha, q}}\, d\tau\\
		&\lesssim |\Omega|^{-1}M^{1+k\alpha, q}_0(t)\left(\left\|u_0\right\|_{L^1}
		+\left(|\Omega|^\frac14\left\|u_0\right\|_{L^2}\right)^\frac43
			U^{0, q}_0(t)^\frac{2}{3}\right).
	\end{align*}

For the second half of the Duhamel integral, we have
	\begin{align*}
		&\int_\frac t2^t\left\|T_\Omega(t-\tau)\mathbb P\operatorname{div}(u(\tau)\otimes u(\tau))\right\|_{\dot W^{1+k\alpha, q}}\, d\tau\\
		&\lesssim \int_\frac t2^t(1+|\Omega|(t-\tau))^{-1+\frac 2q}(t-\tau)^{-\frac12-\frac\alpha2-\frac32(\frac1q-\frac s3)}\left\|u(\tau)\right\|_{\dot W^{1+(k-1)\alpha, q}}\left\|u(\tau)\right\|_{\dot W^{s, q}}\, d\tau\\
		&\lesssim |\Omega|^{-1}M^{1+(k-1)\alpha, q}_0(t)U^{1+(k-1)\alpha, q}_0(t)
		\int_\frac t2^t(1+|\Omega|(t-\tau))^{-1+\frac 2q}(t-\tau)^{-\frac12-\frac\alpha2-\frac32(\frac1q-\frac s3)}\left\|u(\tau)\right\|_{\dot W^{s, q}}\, d\tau.
	\end{align*}
For the integral factor, we obtain
	\begin{align*}
		&\int_\frac t2^t(1+|\Omega|(t-\tau))^{-1+\frac 2q}(t-\tau)^{-\frac12-\frac\alpha2-\frac32(\frac1q-\frac s3)}\left\|u(\tau)\right\|_{\dot W^{s, q}}\, d\tau\\
		&\leq \left\|u\right\|_{L^\theta([0, \infty); \dot W^{s, q})}\left(\int_\frac t2^t\left((1+|\Omega|(t-\tau))^{-1+\frac 2q}(t-\tau)^{-\frac12-\frac\alpha2-\frac32(\frac1q-\frac s3)}\right)^\frac\theta{\theta-1}\, d\tau\right)^{1-\frac1\theta}\\
		&\lesssim |\Omega|^{-\frac1\theta+\frac34(1-\frac2q)}\left\|u_0\right\|_{\dot H^s}|\Omega|^{\frac12+\frac\alpha2 +\frac32(\frac1q-\frac s3)-1+\frac1\theta}
		=|\Omega|^\frac\alpha2|\Omega|^{-\frac12(s-\frac12)}\left\|u_0\right\|_{\dot H^s}
	\end{align*}
and
	\begin{align*}
		&\int_\frac t2^t(1+|\Omega|(t-\tau))^{-1+\frac 2q}(t-\tau)^{-\frac12-\frac\alpha2-\frac32(\frac1q-\frac s3)}\left\|u(\tau)\right\|_{\dot W^{s, q}}\, d\tau\\
		&\lesssim |\Omega|^{-1}M^{s, q}_0(t)U^{s, q}_0(t)t^{\frac12-\frac\alpha2-\frac32(\frac1q-\frac s3)}
		\leq |\Omega t|^{-1}t^{-\frac\alpha2}U^{s, q}_0(t).
	\end{align*}
Thus by interpolation we have
	\begin{align*}
		&\int_\frac t2^t\left\|T_\Omega(t-\tau)\mathbb P\operatorname{div}(u(\tau)\otimes u(\tau))\right\|_{\dot W^{1+k\alpha, q}}\, d\tau\\
		&\lesssim |\Omega|^{-1}M^{1+(k-1)\alpha, q}_0(t)U^{1+(k-1)\alpha, q}_0(t)
		\min\left\{|\Omega|^\frac\alpha2|\Omega|^{-\frac12(s-\frac12)}\left\|u_0\right\|_{\dot H^s}, 
		|\Omega t|^{-1}t^{-\frac\alpha2}U^{s, q}_0(t)\right\}\\
		&\leq |\Omega|^{-1}M^{1+k\alpha, q}_0(t)U^{1+(k-1)\alpha, q}_0(t)
		\left(|\Omega|^{-\frac12(s-\frac12)}\left\|u_0\right\|_{\dot H^s}\right)^{\frac{2}{2+\alpha}}
		U^{s, q}_0(t)^\frac{\alpha}{2+\alpha}.
	\end{align*}
Since
	$0<s<1, $
interpolating the estimates of Proposition~\ref{prop:10} and Proposition~\ref{prop:11} gives the boundedness of
	$U^{s, q}_0(t)$
for
	$t\in(0, \infty).$
The hypothesis of induction also ensures the boundedness of the term
	$U^{1+(k-1)\alpha, q}_0(t), $
and thus we obtain the desired conclusion.
\qed

\subsection{$L^2$-type estimates}
\begin{proposition}\label{prop:13}
	Let
		$u$
	be the global solution obtained in Theorem~\ref{thm:1} and assume that the initial data
		$u_0$
	satisfies
		$u_0\in L^1(\mathbb R^3)^3\cap \dot H^s(\mathbb R^3)^3$
	and
		$|\Omega|^{-\frac12(s-\frac12)}\left\|u_0\right\|_{\dot H^s}
		\leq C_\star.$
	Then for any
		$m\geq 0$
	there exists a constant
		$C>0$
	depending only on
		$m, $
		$s, $
		$q, $
		$\theta, $
		$|\Omega|\left\|u_0\right\|_{L^1}, $
		$|\Omega|^\frac14\left\|u_0\right\|_{L^2}, $
	and
		$|\Omega|^{-\frac12(s-\frac12)}\left\|u_0\right\|_{\dot H^s}, $
	satisfying
		$U^{m, 2}_0(t)\leq C$
	for all
		$t>0.$
\end{proposition}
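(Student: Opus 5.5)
We follow the same Duhamel splitting used in Proposition~\ref{prop:10}, now measuring in $\dot H^m$ with $p=2$. Note that $M^{m,2}_0(t)=t^{-\frac m2-\frac34}$: no dispersive factor appears at $p=2$. It therefore suffices to bound $\|u(t)\|_{\dot H^m}\lesssim |\Omega|^{-1}t^{-\frac m2-\frac34}$. We write
\begin{align*}
	\left\|u(t)\right\|_{\dot H^m}
	\leq \left\|T_\Omega(t)u_0\right\|_{\dot H^m}
	+\int_0^{\frac t2}\left\|T_\Omega(t-\tau)\mathbb P\operatorname{div}(u\otimes u)(\tau)\right\|_{\dot H^m}d\tau
	+\int_{\frac t2}^t\left\|T_\Omega(t-\tau)\mathbb P\operatorname{div}(u\otimes u)(\tau)\right\|_{\dot H^m}d\tau .
\end{align*}

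\emph{Linear term.} Proposition~\ref{prop:2.2} with $p=2$ and exponent $1$ gives $\|T_\Omega(t)u_0\|_{\dot H^m}\lesssim t^{-\frac m2-\frac34}\|u_0\|_{L^1}$. This is $|\Omega|^{-1}M^{m,2}_0(t)\cdot|\Omega|\|u_0\|_{L^1}$.

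\emph{First half of the Duhamel term.} We apply Proposition~\ref{prop:2.2} from $L^1$ to $\dot W^{m+1,2}$. This yields the factor $(t-\tau)^{-\frac m2-\frac12-\frac34}\simeq t^{-\frac m2-\frac54}$, multiplied by $\int_0^{t/2}\|u(\tau)\|_{L^2}^2\,d\tau$. To avoid the crude energy bound we instead use $\|u\otimes u\|_{L^1}\le\|u\|_{L^2}^{2}$ together with an interpolation between the energy bound $\|u(\tau)\|_{L^2}\le\|u_0\|_{L^2}$ and the $L^q$ decay of Proposition~\ref{prop:10}. An alternative is to estimate $\|u\otimes u\|_{L^{r}}$ with $\frac1r=\frac12+\frac1q$, by $\|u\|_{L^2}\|u\|_{L^q}$, and take the $L^2$ bound in the form $\|u(\tau)\|_{L^2}\le\|u_0\|_{L^2}$. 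Then Proposition~\ref{prop:2.2} (from $L^r$, $r\le 2$) gives $(t-\tau)^{-\frac m2-\frac12-\frac32(\frac1q)}$. The remaining integral $\int_0^{t/2}\|u\|_{L^q}\,d\tau\lesssim |\Omega|^{-1}U^{0,q}_0\, t^{1-\frac32(1-\frac1q)}$ converges because $q<3$. The powers of $t$ combine to $t^{-\frac m2-\frac34}\cdot t^{0}$ exactly: indeed $-\frac12-\frac3{2q}+1-\frac32+\frac3{2q}=-1$, and $-\frac m2-1$ is better than $-\frac m2-\frac34$ only by $t^{-\frac14}$. We then interpolate, as in Proposition~\ref{prop:10}, with the pure-energy bound $t^{-\frac m2-\frac34}t^{\frac12}\|u_0\|_{L^2}^2$. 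This produces a scale-invariant combination of $|\Omega|^{\frac14}\|u_0\|_{L^2}$ and $U^{0,q}_0$, which is already bounded by Proposition~\ref{prop:10}.

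\emph{Second half of the Duhamel term.} This is where all derivatives must be placed on the nonlinearity. We use Proposition~\ref{prop:2.2} from $L^{r}$, $\frac1r=\frac12+\frac1q$, to $L^2$, with one derivative from $\operatorname{div}$. We place $m$ derivatives on $u\otimes u$ via Proposition~\ref{prop:2.5}: $\|u\otimes u\|_{\dot W^{m,r}}\lesssim\|u\|_{\dot H^m}\|u\|_{L^q}$. The time factor $(t-\tau)^{-\frac12-\frac3{2q}}$ is integrable since $q>3$ fails but $\frac12+\frac3{2q}<1$ holds for $q>3$... This is the expected obstacle: for $q\in(2,3)$ we have $\frac12+\frac3{2q}>1$. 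We therefore shift part of the derivative: we use $\|u\|_{\dot W^{m+\alpha,q}}\|u\|_{L^2}$ or $\|u\|_{\dot H^m}\|u\|_{\dot W^{\sigma,q}}$ with $\sigma\in(0,1]$ chosen so that $\frac12+\frac32(\frac1q-\frac\sigma3)<1$. This is possible by Sobolev embedding, since $\frac1q-\frac\sigma3<\frac13$ for $\sigma$ near $1$. The bounds $U^{\sigma,q}_0$ and $U^{1+m,q}_0$ are supplied by the preceding proposition.

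\emph{Closing the argument.} Inserting $\|u(\tau)\|_{\dot H^m}\le|\Omega|^{-1}M^{m,2}_0(\tau)U^{m,2}_0(t)$ and $\|u(\tau)\|_{\dot W^{\sigma,q}}\le |\Omega|^{-1}M^{\sigma,q}_0(\tau)U^{\sigma,q}_0$, the integral over $(\frac t2,t)$ contributes a factor $(|\Omega| t)^{-1}\cdot$(bounded). This makes the term small only for large $|\Omega|t$, not uniformly. To avoid an absorption problem, we put all derivatives on the $L^q$-factor instead: $\|u\|_{L^2}\|u\|_{\dot W^{m+\sigma,q}}$, using the energy bound for $\|u\|_{L^2}\le \|u_0\|_{L^2}$ only near small times. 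For the decay we use $U^{0,2}_0$ for the $L^2$ factor, and then $U^{m,2}_0$ does not appear on the right-hand side at all. The case $m=0$ is handled first, yielding a bound $U^{0,2}_0\lesssim C+ C\,U^{0,2}_0(t)^{\beta}$ with $\beta<1$ by interpolating the two time bounds (the $(|\Omega|t)^{-1}$ gain versus the bounded one), as in Proposition~\ref{prop:10}. Once $U^{0,2}_0$ is bounded, the case of general $m$ follows directly, since its right-hand side involves only $U^{0,2}_0$ and $U^{m+\sigma,q}_0$.
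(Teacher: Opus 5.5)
\textbf{Gap in the second half of the Duhamel integral.} Your linear term is fine, and so is your bound for $\int_0^{t/2}$. There you bound $\|u\otimes u\|_{L^r}$, $\frac1r=\frac12+\frac1q$, by $\|u_0\|_{L^2}\|u\|_{L^q}$ and interpolate with the pure energy bound. This gives $(|\Omega|^{\frac14}\|u_0\|_{L^2})^{\frac43}U^{0,q}_0(t)^{\frac23}$, which Proposition~\ref{prop:10} already controls. It is slightly cleaner than the paper's version, which uses $U^{0,2}_0$ and then absorbs.

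The problem is the term $\int_{t/2}^t$. Every bound you propose there uses only the pointwise weights $U^{\cdot,\cdot}_0$ and the energy inequality. The $U$'s encode the $L^1$-data rate $\tau^{-\frac m2-\frac32(1-\frac1p)}$, which is too singular as $\tau\to0$.

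Concretely, your final splitting $\|u\|_{L^2}\|u\|_{\dot W^{m+\sigma,q}}$ with $\|u(\tau)\|_{L^2}\le\|u_0\|_{L^2}$ gives
\[
|\Omega|^{-1}M^{m,2}_0(t)\,(|\Omega|t)^{-\frac14}(1+|\Omega|t)^{-1+\frac2q}\,|\Omega|^{\frac14}\|u_0\|_{L^2}\,U^{m+\sigma,q}_0(t).
\]
If you use $U^{0,2}_0$ for the $L^2$ factor instead, the prefactor becomes $(|\Omega|t)^{-1}(1+|\Omega|t)^{-1+\frac2q}U^{0,2}_0(t)U^{m+\sigma,q}_0(t)$.

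Both bounds blow up as $|\Omega|t\to0$, so there is no ``bounded'' estimate to interpolate against. Any interpolation of the two still carries a negative power of $|\Omega|t$. For $m=0$ the second bound is also linear in $U^{0,2}_0$ with an unbounded coefficient. So the asserted inequality $U^{0,2}_0\lesssim C+CU^{0,2}_0(t)^{\beta}$ with $\beta<1$ does not follow.

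\textbf{The missing ingredient.} You need the global space-time bound of Theorem~\ref{thm:1}, $\|u\|_{L^\theta([0,\infty);\dot W^{s,q})}\lesssim|\Omega|^{-\frac1\theta+\frac34(1-\frac2q)}\|u_0\|_{\dot H^s}$, applied through H\"older's inequality in $\tau$ on $(\frac t2,t)$. This is what controls the small-time regime. The paper's steps are:
\begin{enumerate}
\item Take $\frac1r=\frac2q-\frac s3$, which lies in $[\frac12,1]$ by \eqref{eq:5}.
\item Estimate $\|u\otimes u\|_{\dot W^{m,r}}\lesssim\|u\|_{\dot W^{m,q}}\|u\|_{\dot W^{s,q}}$ by Proposition~\ref{prop:2.5} and Sobolev.
\item Bound $\|u(\tau)\|_{\dot W^{m,q}}$ by $U^{m,q}_0$. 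This is already bounded by the $L^q$-type propositions, with interpolation for $0\le m<1$.
\item Put $\|u\|_{\dot W^{s,q}}$ into $L^\theta_\tau$. The kernel $(t-\tau)^{-\frac12-\frac32(\frac2q-\frac s3-\frac12)}$ lies in $L^{\theta'}(\frac t2,t)$ because $\frac1\theta<\frac58-\frac3{2q}+\frac s4$.
\end{enumerate}
The result is
\[
|\Omega|^{-1}M^{m,2}_0(t)\,(|\Omega|t)^{\gamma}(1+|\Omega|t)^{-1+\frac2q}\,U^{m,q}_0(t)\,|\Omega|^{-\frac12(s-\frac12)}\|u_0\|_{\dot H^s},
\qquad \gamma=\tfrac12-\tfrac1\theta-\tfrac32\left(\tfrac1q-\tfrac s3\right).
\]
Conditions \eqref{eq:5}--\eqref{eq:6} give exactly $0<\gamma<1-\frac2q$. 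Hence the factor $(|\Omega|t)^{\gamma}(1+|\Omega|t)^{-1+\frac2q}$ is at most $1$ for all $t>0$, and no absorption is needed.

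With this replacement for the second half, your linear and first-half estimates close the proof directly.
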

\proof
For the linear term, by Proposition~\ref{prop:2.2} we have
	\begin{align*}
		\left\|T_\Omega(t)u_0\right\|_{\dot H^m}
		\lesssim M^{m, 2}_0(t)\left\|u_0\right\|_{L^1}.
	\end{align*}
For the first half of the Duhamel integral, using Proposition~\ref{prop:2.2}, the Cauchy--Schwarz inequality, and the energy equality, we have
	\begin{align*}
		&\int_0^\frac t2\left\|T_\Omega(t-\tau)\mathbb P\operatorname{div}(u(\tau)\otimes u(\tau))\right\|_{\dot H^m}\, d\tau\\
		&\lesssim t^{-\frac m2-\frac34-\frac12}\int_0^\frac t2\left\|u(\tau)\otimes u(\tau)\right\|_{L^1}\, d\tau\\
		&\lesssim M^{m, 2}_0(t)t^{-\frac12}\int_0^\frac t2\left\|u(\tau)\right\|_{L^2}^2\, d\tau
		\lesssim |\Omega|^{-1}M^{m, 2}_0(t)(|\Omega| t)^\frac12(|\Omega|^\frac14\left\|u_0\right\|_{L^2})^2
	\end{align*}
and
	\begin{align}
		&\int_0^\frac t2\left\|T_\Omega(t-\tau)\mathbb P\operatorname{div}(u(\tau)\otimes u(\tau))\right\|_{\dot H^m}\, d\tau\notag\\
		&\lesssim M^{m, 2}_0(t)t^{-\frac12}\int_0^\frac t2\left\|u(\tau)\right\|_{L^2}^2\, d\tau
		\lesssim M^{m, 2}_0(t)t^{-\frac12}\left\|u_0\right\|_{L^2}
		|\Omega|^{-1}U^{0, 2}_0(t)\int_0^\frac t2\tau^{-\frac34}\, d\tau\notag\\
		&\lesssim |\Omega|^{-1}M^{m, 2}_0(t)(|\Omega|t)^{-\frac14}|\Omega|^\frac14\left\|u_0\right\|_{L^2}
		U^{0, 2}_0(t).\label{eq:a}
	\end{align}
Consequently by interpolation we obtain
	\begin{align*}
		&\int_0^\frac t2\left\|T_\Omega(t-\tau)\mathbb P\operatorname{div}(u(\tau)\otimes u(\tau))\right\|_{\dot H^m}\, d\tau
		\lesssim |\Omega|^{-1}M^{m, 2}_0(t)
		\left(|\Omega|^\frac14\left\|u_0\right\|_{L^2}\right)^\frac43U^{0, 2}_0(t)^\frac23.
	\end{align*}

For the second half of the Duhamel integral, we have
	\begin{align*}
		&\int_\frac t2^t\left\|T_\Omega(t-\tau)\mathbb P\operatorname{div}(u(\tau)\otimes u(\tau))\right\|_{\dot H^m}\, d\tau\\
		&\lesssim \int_\frac t2^t(t-\tau)^{-\frac12-\frac32(\frac2q-\frac s3-\frac12)}
		\left\|u(\tau)\right\|_{\dot W^{m, q}} \left\|u(\tau)\right\|_{\dot W^{s, q}}\, d\tau\\
		&\lesssim |\Omega|^{-1}M^{m, q}_0(t)U^{m, q}_0(t)
		\left\|u\right\|_{L^\theta([0, \infty); \dot W^{s, q})}
		\left(\int_\frac t2^t\left((t-\tau)^{-\frac12-\frac32(\frac2q-\frac s3-\frac12)}\right)^{\frac\theta{\theta-1}}
		\, d\tau\right)^{1-\frac1\theta}.
	\end{align*}
Since the assumption
	$\frac1\theta
	<\frac58-\frac3{2q}+\frac s4$
is equivalent to
	$\frac12+\frac32(\frac 2q-\frac s3+\frac12)
	<1-\frac 2\theta$
and we trivially have
	$1-\frac2\theta
	<1-\frac1\theta, $
the integral is finite and we obtain
	\begin{align}
		&\int_\frac t2^t\left\|T_\Omega(t-\tau)\mathbb P\operatorname{div}(u(\tau)\otimes u(\tau))\right\|_{\dot H^m}\, d\tau\notag\\
		&\lesssim |\Omega|^{-1}M^{m, q}_0(t)U^{m, q}_0(t)
		\left\|u\right\|_{L^\theta([0, \infty); \dot W^{s, q})}
		\left(\int_\frac t2^t\left((t-\tau)^{-\frac12-\frac32(\frac2q-\frac s3-\frac12)}\right)^{\frac\theta{\theta-1}}
		\, d\tau\right)^{1-\frac1\theta}\notag\\
		&\lesssim |\Omega|^{-1}U^{m, q}_0(t)
		|\Omega|^{-\frac1\theta+\frac34(1-\frac2q)}\left\|u_0\right\|_{\dot H^s}
		t^{-\frac m2-\frac32(1-\frac1q)+\frac12-\frac1\theta-\frac32(\frac2q-\frac s3-\frac12)}
		(1+|\Omega|t)^{-1+\frac2q}\notag\\
		&=|\Omega|^{-1}
		M^{m, 2}_0(t)
		(|\Omega|t)^{-\frac1\theta+\frac12-\frac{3}{2q}+\frac s2}(1+|\Omega|t)^{-1+\frac2q}
		U^{m, q}_0(t)|\Omega|^{-\frac12(s-\frac12)}\left\|u_0\right\|_{\dot H^s}\label{eq:b}\\
		&\leq|\Omega|^{-1}
		M^{m, 2}_0(t)
		U^{m, q}_0(t)|\Omega|^{-\frac12(s-\frac12)}\left\|u_0\right\|_{\dot H^s}, \notag
	\end{align}
where we used the condition
	$\frac12-\frac32(\frac1q-\frac s3)-(1-\frac2q)
	<\frac1\theta
	<\frac12-\frac32(\frac1q-\frac s3).$

Combining the above estimates, we obtain
	\begin{align*}
		U^{m, 2}_0(t)
		\lesssim |\Omega|\left\|u_0\right\|_{L^1}
		+(|\Omega|^\frac14\left\|u_0\right\|_{L^2})^\frac43U^{0, 2}_0(t)^\frac23
		+U^{m, q}_0(t)
		|\Omega|^{-\frac12(s-\frac12)}\left\|u_0\right\|_{\dot H^s}.
	\end{align*}
If
	$m=0$, 
the estimate yields
	\begin{align*}
		U^{0, 2}_0(t)
		\lesssim |\Omega|\left\|u_0\right\|_{L^1}
		+|\Omega|\left\|u_0\right\|_{L^2}^4
		+U^{0, q}_0(t)
		|\Omega|^{-\frac12(s-\frac12)}\left\|u_0\right\|_{\dot H^s}.
	\end{align*}
Inserting this to the original one, we obtain the desired estimates for general
	$m>0.$
\qed

\begin{proposition}
	Let
		$u$
	be the global solution obtained in Theorem~\ref{thm:1} and assume that the initial data
		$u_0$
	satisfies
		$u_0\in L^1(\mathbb R^3)^3\cap \dot H^s(\mathbb R^3)^3, $
		$|x|u_0\in L^1(\mathbb R^3)^3, $
	and
		$|\Omega|^{-\frac12(s-\frac12)}\left\|u_0\right\|_{\dot H^s}
		\leq C_\star.$
	Then for any
		$m\geq 0$
	there exists a constant
		$C>0$
	depending only on
		$m, $
		$s, $
		$q, $
		$\theta, $
		$|\Omega|\left\|u_0\right\|_{L^1}, $
		$|\Omega|^{\frac32}\left\||x|u_0\right\|_{L^1}, $
		$|\Omega|^\frac14\left\|u_0\right\|_{L^2}, $
	and
		$|\Omega|^{-\frac12(s-\frac12)}\left\|u_0\right\|_{\dot H^s}, $
	satisfying
		$U^{m, 2}_1(t)\leq C$
	for all
		$t>0.$
\end{proposition}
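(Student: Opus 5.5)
We argue through the Duhamel formula exactly as in Proposition~\ref{prop:13}. We split
\begin{align*}
	\left\|u(t)\right\|_{\dot H^m}
	\leq \left\|T_\Omega(t)u_0\right\|_{\dot H^m}
	+\int_0^\frac t2\left\|T_\Omega(t-\tau)\mathbb P\operatorname{div}(u(\tau)\otimes u(\tau))\right\|_{\dot H^m}\, d\tau
	+\int_\frac t2^t\left\|T_\Omega(t-\tau)\mathbb P\operatorname{div}(u(\tau)\otimes u(\tau))\right\|_{\dot H^m}\, d\tau,
\end{align*}
and aim for a bound of the form $|\Omega|^{-1-\frac12}M^{m,2}_1(t)\times(\cdots)$ for each piece. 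The linear term is handled directly by Proposition~\ref{prop:2.4}. It gives $\|T_\Omega(t)u_0\|_{\dot H^m}\lesssim M^{m,2}_1(t)\||x|u_0\|_{L^1}$, which after multiplying by $|\Omega|^{\frac32}$ produces the scale-invariant quantity $|\Omega|^{\frac32}\||x|u_0\|_{L^1}$.

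For the first half of the Duhamel term, the kernel bound of Proposition~\ref{prop:2.2} with $q=1$ gives $(t-\tau)^{-\frac m2-\frac34-\frac12}\|u(\tau)\|_{L^2}^2\lesssim M^{m,2}_0(t)\,t^{-\frac12}\|u(\tau)\|_{L^2}^2$. This already contains the extra factor $t^{-\frac12}$, so it suffices to show that $\int_0^\infty\|u(\tau)\|_{L^2}^2\,d\tau$ is finite and controlled by the admissible quantities. For this we use
\begin{align*}
	\|u(\tau)\|_{L^2}^2\leq\min\Big\{\|u_0\|_{L^2}^2,\ \big(|\Omega|^{-1}U^{0,2}_0(\infty)\big)^2\tau^{-\frac32}\Big\}.
\end{align*}
The first bound is the energy equality (Proposition~\ref{prop:3.5}); the second is Proposition~\ref{prop:13}, noting that the factor $(1+|\Omega|\tau)^{-1+\frac2p}$ is trivial for $p=2$. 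Since $\tau^{-\frac32}$ is integrable at infinity, splitting the integral at the crossover time gives
\begin{align*}
	\int_0^\infty\|u\|_{L^2}^2\,d\tau\lesssim \|u_0\|_{L^2}^{\frac23}\big(|\Omega|^{-1}U^{0,2}_0\big)^{\frac43}.
\end{align*}
By Remark~\ref{rem:1.6} this equals $|\Omega|^{-\frac12}$ times a function of the invariant quantities, which is exactly the required scaling.

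The second half $\int_{t/2}^t$ is the main obstacle. Here $t-\tau$ can be small, so the extra decay has to come from $u(\tau)$ itself, i.e.\ from $U^{m,2}_1$, and this must be absorbed. First I apply the heat smoothing with the fractional Leibniz rule (Proposition~\ref{prop:2.5}) and Sobolev embedding, as in \eqref{eq:b}, but putting the $\dot H^m$-norm on one factor:
\begin{align*}
	\|e^{(t-\tau)\Delta}\operatorname{div}(u\otimes u)\|_{\dot H^m}\lesssim (t-\tau)^{-\frac12-\frac32(\frac1q-\frac s3)}\|u(\tau)\|_{\dot H^m}\|u(\tau)\|_{\dot W^{s,q}}.
\end{align*}
Then I insert $\|u(\tau)\|_{\dot H^m}\leq|\Omega|^{-\frac32}M^{m,2}_1(\tau)U^{m,2}_1(t)$, with $M^{m,2}_1(\tau)\simeq M^{m,2}_1(t)$ on $[t/2,t]$. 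The remaining integral $\int_{t/2}^t(t-\tau)^{-\frac12-\frac32(\frac1q-\frac s3)}\|u(\tau)\|_{\dot W^{s,q}}\,d\tau$ is estimated in two ways. The first is H\"older in time against $\|u\|_{L^\theta\dot W^{s,q}}$, which is admissible since $\frac1\theta<\frac12-\frac32(\frac1q-\frac s3)$. The second uses the pointwise decay $\|u(\tau)\|_{\dot W^{s,q}}\lesssim|\Omega|^{-1}M^{s,q}_0(\tau)U^{s,q}_0$, obtained by interpolating Propositions~\ref{prop:10} and \ref{prop:11}. Exactly as in the proof of the preceding proposition, taking the minimum of the two bounds produces a factor that is dimensionless and bounded, and it is small when $|\Omega|^{-\frac12(s-\frac12)}\|u_0\|_{\dot H^s}\leq C_\star$. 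Collecting the three pieces gives
\begin{align*}
	U^{m,2}_1(t)\lesssim |\Omega|^{\frac32}\||x|u_0\|_{L^1}+C_1+\delta\,U^{m,2}_1(t),
\end{align*}
where $C_1$ is built from the admissible quantities and $\delta\lesssim |\Omega|^{-\frac12(s-\frac12)}\|u_0\|_{\dot H^s}$. Since $U^{m,2}_1(t)<\infty$ for each $t$ by Lemma~\ref{lem:6}, the last term can be absorbed whenever $\delta$ is small.

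If the smallness available from $C_\star$ is not sufficient, I would fall back on interpolating the second-half bound against the $t^{-0}$ estimate \eqref{eq:b}. That yields an inequality of the type $U^{m,2}_1\lesssim A+B\,(U^{m,2}_1)^{\gamma}$ with $\gamma<1$, as at the end of Proposition~\ref{prop:10}, which gives boundedness without any further smallness.
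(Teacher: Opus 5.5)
Your linear and first-half estimates are the paper's. One slip there: $\|u_0\|_{L^2}^{2/3}(|\Omega|^{-1}U^{0,2}_0)^{4/3}$ is $|\Omega|^{-3/2}$ times invariant quantities, not $|\Omega|^{-1/2}$. That is exactly what a bound by $|\Omega|^{-3/2}M^{m,2}_1(t)$ needs.

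\textbf{The gap.} Your absorption step for $\int_{t/2}^t$ does not close. Put $\varepsilon\coloneqq|\Omega|^{-\frac12(s-\frac12)}\|u_0\|_{\dot H^s}$, $a\coloneqq\frac12+\frac32(\frac1q-\frac s3)$, and $\beta\coloneqq1-a-\frac1\theta$, which is positive by the condition you cite. Your two bounds for $I(t)=\int_{t/2}^t(t-\tau)^{-a}\|u(\tau)\|_{\dot W^{s,q}}\,d\tau$ are
\begin{align*}
I(t)\lesssim(|\Omega|t)^{\beta}\varepsilon
\quad\text{and}\quad
I(t)\lesssim(|\Omega|t)^{-1}U^{s,q}_0(t).
\end{align*}
The best these give uniformly in $t$ is $\sup_{X>0}\min\{X^\beta\varepsilon,X^{-1}U^{s,q}_0\}=\varepsilon^{\frac1{1+\beta}}(U^{s,q}_0)^{\frac\beta{1+\beta}}$. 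This is bounded but not small. The only control on $U^{s,q}_0$ is through $|\Omega|\|u_0\|_{L^1}$ and $|\Omega|^{\frac14}\|u_0\|_{L^2}$, which are not assumed small. So a threshold $C_\star$ depending only on $s,q,\theta$ cannot make your $\delta$ small, and $\delta\lesssim\varepsilon$ is not what your bounds give. In the proposition on $U^{1+m,q}_0$, the analogous min-factor is never absorbed; it only multiplies a quantity already bounded by induction.

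\textbf{The fallback is necessary, and it works.} It closes the argument provided you interpolate the decaying bound, not the minimum. Take
\begin{align*}
|\Omega|^{-\frac32}M^{m,2}_1(t)(|\Omega|t)^{-1}U^{s,q}_0(t)U^{m,2}_1(t)
\end{align*}
and \eqref{eq:b} written as $|\Omega|^{-\frac32}M^{m,2}_1(t)(|\Omega|t)^{\frac12}U^{m,q}_0(t)\varepsilon$, with weights $\frac13$ and $\frac23$. This gives
\begin{align*}
U^{m,2}_1(t)\lesssim A+\big(U^{m,q}_0(t)\varepsilon\big)^{\frac23}U^{s,q}_0(t)^{\frac13}\,U^{m,2}_1(t)^{\frac13},
\end{align*}
where $A$ collects the linear and first-half contributions. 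Since $U^{m,2}_1(t)<\infty$, this bounds it.

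\textbf{Comparison with the paper.} The paper is more direct: it puts $\dot W^{m,q}$ rather than $\dot H^m$ on the factor,
\begin{align*}
\|e^{(t-\tau)\Delta}\operatorname{div}(u\otimes u)\|_{\dot H^m}\lesssim(t-\tau)^{-\frac12-\frac32(\frac2q-\frac s3-\frac12)}\|u(\tau)\|_{\dot W^{m,q}}\|u(\tau)\|_{\dot W^{s,q}}.
\end{align*}
The already established $L^q$-type bounds then give $|\Omega|^{-\frac32}M^{m,2}_1(t)(|\Omega|t)^{-\frac12}U^{m,q}_0U^{s,q}_0$. Interpolating this with \eqref{eq:b} yields $|\Omega|^{-\frac32}M^{m,2}_1(t)\,U^{m,q}_0(U^{s,q}_0)^{\frac12}\varepsilon^{\frac12}$, with no self-referential inequality. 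Your repaired argument needs \eqref{eq:b}, and hence $U^{m,q}_0$, anyway, so placing $\dot H^m$ on the factor gains nothing.
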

\proof
For the linear term, by Proposition~\ref{prop:2.4}, we have
	\begin{align*}
		\left\|T_\Omega(t)u_0\right\|_{\dot H^m}
		\lesssim M^{m, 2}_1(t)\left\||x|u_0\right\|_{L^1}.
	\end{align*}
For the first half of the Duhamel integral, we have
	\begin{align*}
		&\int_0^\frac t2\left\|T_\Omega(t-\tau)\mathbb P\operatorname{div}(u(\tau)\otimes u(\tau))\right\|_{\dot H^m}\, d\tau\\
		&\lesssim M^{m, 2}_0(t)t^{-\frac12}
		\int_0^\frac t2\left\|u(\tau)\otimes u(\tau)\right\|_{L^1}\, d\tau
		\lesssim M^{m, 2}_1(t)\int_0^\infty\left\|u(\tau)\right\|_{L^2}^2\, d\tau.
	\end{align*}
In order to estimate the integral term, let
	$\lambda>0$
be an arbitrary positive real number and we split the integral at
	$\tau=\lambda$
to obtain
	\begin{align*}
		&\int_0^\infty\left\|u(\tau)\right\|_{L^2}^2\, d\tau
		=\int_0^\lambda\left\|u(\tau)\right\|_{L^2}^2\, d\tau
		+\int_\lambda^\infty\left\|u(\tau)\right\|_{L^2}^2\, d\tau\\
		&\leq\left\|u_0\right\|_{L^2}^2\int_0^\lambda\, d\tau
		+|\Omega|^{-2}\big\|U^{0, 2}_0\big\|_{L^\infty}^2
		\int_\lambda^\infty \tau^{-\frac32}\, d\tau
		\lesssim \left\|u_0\right\|_{L^2}^2\lambda
		+|\Omega|^{-2}\big\|U^{0, 2}_0\big\|_{L^\infty}^2\lambda^{-\frac12}.
	\end{align*}
Here, by setting
	$\left\|u_0\right\|_{L^2}^2\lambda
	=|\Omega|^{-2}\big\|U^{0, 2}_0\big\|_{L^\infty}^2\lambda^{-\frac12}, $
i.e., 
	$\lambda
	=\left\|u_0\right\|_{L^2}^{-\frac43}|\Omega|^{-\frac43}
	\big\|U^{0, 2}_0\big\|_{L^\infty}^\frac43, $
we have
	\begin{align*}
		&\int_0^\infty\left\|u(\tau)\right\|_{L^2}^2\, d\tau
		\lesssim \left\|u_0\right\|_{L^2}^{\frac23}|\Omega|^{-\frac43}
	\big\|U^{0, 2}_0\big\|_{L^\infty}^\frac43, 
	\end{align*}
and thus
	\begin{align*}
		&\int_0^\frac t2\left\|T_\Omega(t-\tau)\mathbb P\operatorname{div}(u(\tau)\otimes u(\tau))\right\|_{\dot H^m}\, d\tau
		\lesssim |\Omega|^{-\frac32}M^{m, 2}_1(t)
		\left(|\Omega|^\frac14\left\|u_0\right\|_{L^2}\right)^{\frac23}
	\big\|U^{0, 2}_0\big\|_{L^\infty}^\frac43.
	\end{align*}
	
For the second half of the Duhamel integral,
as in Proposition~\ref{prop:13}, we have
	\begin{align*}
		&\int_\frac t2^t\left\|T_\Omega(t-\tau)\mathbb P\operatorname{div}(u(\tau)\otimes u(\tau))\right\|_{\dot H^m}\, d\tau\\
		&\lesssim \int_\frac t2^t(t-\tau)^{-\frac12-\frac32(\frac2q-\frac s3-\frac12)}
		\left\|u(\tau)\right\|_{\dot W^{m, q}} \left\|u(\tau)\right\|_{\dot W^{s, q}}\, d\tau\\
		&\lesssim |\Omega|^{-1}M^{m, q}_0(t)U^{m, q}_0(t)
		|\Omega|^{-1}M^{s, q}_0(t)U^{s, q}_0(t)t^{\frac12-\frac32(\frac2q-\frac s3-\frac12)}\\
		&\leq |\Omega|^{-2}U^{m, q}_0(t)U^{s, q}_0(t)
		t^{-\frac m2-\frac74}
		=|\Omega|^{-\frac32}M^{m, 2}_1(t)(|\Omega|t)^{-\frac12}U^{m, q}_0(t)U^{s, q}_0(t).
	\end{align*}
Interpolating this estimate with the one which is previously obtained in Proposition~\ref{prop:13}, i.e., 
	\begin{align*}
		&\int_\frac t2^t\left\|T_\Omega(t-\tau)\mathbb P\operatorname{div}(u(\tau)\otimes u(\tau))\right\|_{\dot H^m}\, d\tau
		\lesssim|\Omega|^{-1}
		M^{m, 2}_0(t)
		U^{m, q}_0(t)|\Omega|^{-\frac12(s-\frac12)}\left\|u_0\right\|_{\dot H^s}\\
		&=|\Omega|^{-\frac32}
		M^{m, 2}_1(t)
		(|\Omega|t)^\frac12
		U^{m, q}_0(t)|\Omega|^{-\frac12(s-\frac12)}\left\|u_0\right\|_{\dot H^s}, 
	\end{align*}
we have
	\begin{align*}
		&\int_\frac t2^t\left\|T_\Omega(t-\tau)\mathbb P\operatorname{div}(u(\tau)\otimes u(\tau))\right\|_{\dot H^m}\, d\tau
		\lesssim|\Omega|^{-\frac32}
		M^{m, 2}_1(t)
		U^{m, q}_0(t)
		\left(|\Omega|^{-\frac12(s-\frac12)}\left\|u_0\right\|_{\dot H^s}\right)^\frac12
		U^{s, q}_0(t)^\frac12.
	\end{align*}

Combining the above estimates, we have
	\begin{align*}
		U^{m,2}_1(t)
		\lesssim |\Omega|^\frac32\left\||x|u_0\right\|_{L^1}
		+\left(|\Omega|^\frac14\left\|u_0\right\|_{L^2}\right)^{\frac23}
		\big\|U^{0, 2}_0\big\|_{L^\infty}^\frac43
		+U^{m, q}_0(t)U^{s, q}_0(t)^{\frac12}\left(|\Omega|^{-\frac12(s-\frac12)}\left\|u_0\right\|_{\dot H^s}\right)^{\frac12}.
	\end{align*}
This completes the proof.
\qed

\subsection{$L^\infty$-type estimates}
\begin{proposition}\label{prop:15}
	Let
		$u$
	be the global solution obtained in Theorem~\ref{thm:1} and assume that the initial data
		$u_0$
	satisfies
		$u_0\in L^1(\mathbb R^3)^3\cap \dot H^s(\mathbb R^3)^3$
	and
		$|\Omega|^{-\frac12(s-\frac12)}\left\|u_0\right\|_{\dot H^s}
		\leq C_\star.$
	Then for any
		$m\geq 0$
	there exists a constant
		$C>0$
	depending only on
		$m, $
		$s, $
		$q, $
		$\theta, $
		$|\Omega|\left\|u_0\right\|_{L^1}, $
		$|\Omega|^\frac14\left\|u_0\right\|_{L^2}, $
	and
		$|\Omega|^{-\frac12(s-\frac12)}\left\|u_0\right\|_{\dot H^s}, $
	satisfying
		$U^{m, \infty}_0(t)\leq C$
	for all
		$t>0.$
\end{proposition}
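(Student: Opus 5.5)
We prove the bound on $U^{m,\infty}_0$ in the same way as the $L^q$- and $L^2$-type bounds. We split $u(t)$ into the linear part, the Duhamel integral over $(0,\frac t2)$, and the Duhamel integral over $(\frac t2,t)$, and bound each by $|\Omega|^{-1}M^{m,\infty}_0(t)$ times quantities already controlled in Propositions~\ref{prop:10}, \ref{prop:11}, the induction proposition for $U^{1+m,q}_0$, and Proposition~\ref{prop:13}.

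\emph{Linear term.} Proposition~\ref{prop:2.2} with $p=\infty$ and $q=1$ gives
\[
\|T_\Omega(t)u_0\|_{\dot W^{m,\infty}}\lesssim M^{m,\infty}_0(t)\|u_0\|_{L^1}.
\]

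\emph{First half.} Proposition~\ref{prop:2.2} with $(p,q)=(\infty,1)$ applied to $\operatorname{div}(u\otimes u)$ gives the factor
\[
(t-\tau)^{-\frac m2-\frac12-\frac32}(1+|\Omega|(t-\tau))^{-1}\sim t^{-\frac12}M^{m,\infty}_0(t)
\]
for $\tau<\frac t2$. We then bound $\int_0^{t/2}\|u\|_{L^2}^2\,d\tau$ in two ways. The energy equality (Proposition~\ref{prop:3.5}) gives $t\|u_0\|_{L^2}^2$. Alternatively, $\|u_0\|_{L^2}\|u(\tau)\|_{L^2}$ together with $U^{0,2}_0$ gives $t^{1/4}\|u_0\|_{L^2}|\Omega|^{-1}U^{0,2}_0$. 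Interpolating these exactly as in Proposition~\ref{prop:13} yields
\[
|\Omega|^{-1}M^{m,\infty}_0(t)\bigl(|\Omega|^{\frac14}\|u_0\|_{L^2}\bigr)^{\frac43}(U^{0,2}_0)^{\frac23},
\]
which is bounded by Proposition~\ref{prop:13}.

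\emph{Second half.} This is the main obstacle, because we must trade a derivative for integrability near $\tau=t$ without the $L^\theta$ smallness structure tuned for $p=\infty$. The plan is to use no smallness here, since all the $L^2$ and $L^q$ weights are already bounded. We apply Proposition~\ref{prop:2.2} with $p=\infty$ and source exponent $q_1=2$ (admissible, since $q_1\le p'=1$ fails). So instead we first use Proposition~\ref{prop:2.2} for $p=2$ with smoothing and then Sobolev/Bernstein, or more simply we put the derivative on the product. We write
\[
\|T_\Omega(t-\tau)\mathbb P\operatorname{div}(u\otimes u)\|_{\dot W^{m,\infty}}\lesssim \|e^{\frac{t-\tau}{2}\Delta}\ \cdot\ \|_{\dot B^{m+1}_{\infty,1}}\text{-type bound}.
\]
Concretely, we use the heat smoothing plus Bernstein to pass from $L^{r}$ to $L^\infty$, with a power $(t-\tau)^{-\frac12-\frac3{2r}-\epsilon}$ that is integrable near $\tau=t$ once $r>3$. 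We control $\|u\otimes u\|_{\dot W^{m,r}}$ by the fractional Leibniz rule (Proposition~\ref{prop:2.5}), placing $u$ in $L^{2r}$ and $\dot W^{m,2r}$. These norms are obtained by Gagliardo--Nirenberg interpolation between $\dot H^{k}$ norms (Proposition~\ref{prop:13}), which all carry the decay weights $M^{k,2}_0$. The dispersive factor is not needed on $(\frac t2,t)$. There the factor $(1+|\Omega|\tau)^{-1+\frac2p}$ for the product is quadratic in $u$, and it is at least as strong as the single factor $(1+|\Omega|t)^{-1}$ required. This follows because the two $L^{2r}$-type decays each give roughly $(1+|\Omega|t)^{-1+\frac1r}$, whose square beats $(1+|\Omega|t)^{-1}$ when $r>2$. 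Similarly the $t$-powers match by scaling: the product gives $t^{-\frac m2-3(1-\frac1{2r})}$, and after integrating over a length-$t$ interval with the kernel this is $\le t^{-\frac m2-\frac32}$. We check the exponent bookkeeping by tracking the scaling invariance of Remark~\ref{rem:1.6}.

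\emph{Conclusion.} Summing the three estimates gives
\[
U^{m,\infty}_0(t)\lesssim |\Omega|\|u_0\|_{L^1}+C\bigl(\sup U^{k,2}_0\bigr),
\]
with constants depending only on the listed quantities. If the pure $L^2$ interpolation loses at small times, where the weight $(1+|\Omega|t)$ is irrelevant, we fall back on the $L^q$-bounds $U^{k,q}_0$ with $q\in(2,3)$ and $r=q/2$-type Leibniz splitting.
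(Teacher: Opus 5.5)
Your linear term and first-half estimate are fine: the two bounds on $\int_0^{t/2}\|u(\tau)\|_{L^2}^2\,d\tau$, interpolated with weights $\frac13$ and $\frac23$, reproduce the paper's bound $|\Omega|^{-1}M^{m,\infty}_0(t)\bigl(|\Omega|^{\frac14}\|u_0\|_{L^2}\bigr)^{\frac43}\bigl(U^{0,2}_0(t)\bigr)^{\frac23}$. The genuine gap is in the second half of the Duhamel integral at small times $|\Omega|t\lesssim1$, and it is hidden because your bookkeeping drops the powers of $|\Omega|$.

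Proposition~\ref{prop:13} gives $\|u(\tau)\|_{\dot H^k}\lesssim|\Omega|^{-1}\tau^{-\frac k2-\frac34}$. Hence $\|u(\tau)\|_{L^{2r}}\|u(\tau)\|_{\dot W^{m,2r}}\lesssim|\Omega|^{-2}\tau^{-\frac m2-3+\frac3{2r}}$. Integrating this against $(t-\tau)^{-\frac12-\frac3{2r}}$ over $(\frac t2,t)$ yields
\[
|\Omega|^{-2}t^{-\frac m2-\frac52}=|\Omega|^{-1}M^{m,\infty}_0(t)\,\frac{1+|\Omega|t}{|\Omega|t},
\]
not something bounded by $t^{-\frac m2-\frac32}$. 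This is acceptable for $|\Omega|t\ge1$. As $t\to0$, however, it loses a factor $(|\Omega|t)^{-1}$. That is fatal, because $U^{m,\infty}_0(t)$ is a supremum over $(0,t]$.

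The loss is structural. Every norm controlled through $U^{k,2}_0$ or $U^{k,q}_0$ carries a factor $|\Omega|^{-1}$ times a pure power of $\tau$ near $\tau=0$. So any bound quadratic in these weights is $|\Omega|^{-2}$ times a power of $t$, and by the scaling of Remark~\ref{rem:1.6} it must miss the target by $(|\Omega|t)^{-1}$ at small times. This is exactly \eqref{eq:d}, which the paper uses only for large $t$. Consequently your fallback to $U^{k,q}_0$ cannot help. It also fails for a second reason: with $r=q/2<\frac32$, the kernel $(t-\tau)^{-\frac12-\frac3{2r}}$ is not integrable at $\tau=t$.

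Your claim that each $L^{2r}$ factor carries $(1+|\Omega|t)^{-1+\frac1r}$ is also unjustified. Gagliardo--Nirenberg from $\dot H^k$ produces no dispersive factor, since $M^{k,2}_0$ has none, and $L^{2r}$ bounds with $2r>q$ are part of what is being proved. This error is harmless for large $t$, where scaling alone supplies the needed $(|\Omega|t)^{-1}$.

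The missing ingredient is this: at small times, one factor of $u\otimes u$ must be measured in a quantity that does not carry the weight $|\Omega|^{-1}$. That is precisely the smallness structure you chose to discard. The paper proceeds as follows.
\begin{itemize}
\item It writes $\|u\otimes u\|_{\dot H^{m+1}}\lesssim\|u\|_{\dot W^{s,q}}\|u\|_{\dot W^{s_0+m+1,q}}$ (Leibniz rule plus Sobolev embedding) and uses the $L^2\to L^\infty$ heat bound $(t-\tau)^{-\frac34}$.
\item It keeps the first factor in $L^\theta_t$, using $\|u\|_{L^\theta([0,\infty);\dot W^{s,q})}\lesssim|\Omega|^{-\frac1\theta+\frac34(1-\frac2q)}\|u_0\|_{\dot H^s}$ from Theorem~\ref{thm:1}.
\item It applies H\"older in $\tau$, which requires $\frac1\theta<\frac14$; this is checked from \eqref{eq:5}--\eqref{eq:6}.
\item The result contains the factor $(|\Omega|t)^{-\frac1\theta+\frac12-\frac3{2q}+\frac s2}(1+|\Omega|t)^{-1+\frac2q}\le1$, which controls small $t$. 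This is combined with \eqref{eq:d} for large $t$.
\end{itemize}

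If you prefer to stay in your $L^2$ framework, you could instead interpolate each factor between the energy bound $\|u(\tau)\|_{L^2}\le\|u_0\|_{L^2}$ and $\dot H^K$ bounds with $K$ large. This gains $(|\Omega|\tau)^{\frac34(1-k_i/K)}$ per factor, where $k_1,k_2$ are the Sobolev indices of your two factors. That suffices once $(k_1+k_2)/K\le\frac23$. As written, however, your proposal only handles the regime $|\Omega|t\gtrsim1$.
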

\proof
For the linear part, as usual we have
	\begin{align*}
		\left\|T_\Omega(t)u_0\right\|_{\dot W^{s, \infty}}
		\lesssim M^{m, \infty}_0(t)\left\|u_0\right\|_{L^1}.
	\end{align*}
For the first half of the Duhamel integral, we have
	\begin{align}
		&\int_0^\frac t2\left\|T_\Omega(t-\tau)\mathbb P\operatorname{div}(u(\tau)\otimes u(\tau))\right\|_{\dot W^{m, \infty}}\, d\tau\notag\\
		&\lesssim M^{m, \infty}_{0}(t)t^{-\frac12}\int_0^\frac t2\left\|u(\tau)\right\|_{L^2}^2\, d\tau
		\lesssim M^{m, \infty}_{0}(t)t^{-\frac12}
		\left\|u_0\right\|_{L^2}^{\frac23}|\Omega|^{-\frac43}
		\big\|U^{0, 2}_0\big\|_{L^\infty}^\frac43\notag\\
		&= |\Omega|^{-1}M^{m, \infty}_0(t)(|\Omega|t)^{-\frac12}
		\left(|\Omega|^\frac14\left\|u_0\right\|_{L^2}\right)^\frac23
		\big\|U^{0, 2}_0\big\|_{L^\infty}^\frac43\label{eq:c}
	\end{align}
and
	\begin{align*}
		&\int_0^\frac t2\left\|T_\Omega(t-\tau)\mathbb P\operatorname{div}(u(\tau)\otimes u(\tau))\right\|_{\dot W^{m, \infty}}\, d\tau\\
		&\lesssim M^{m, \infty}_{0}(t)t^{-\frac12}\int_0^\frac t2\left\|u(\tau)\right\|_{L^2}^2\, d\tau
		\lesssim M^{m, \infty}_{0}(t)t^{\frac12}\left\|u_0\right\|_{L^2}^2\\
		&=|\Omega|^{-1}M^{m, \infty}_0(t)(|\Omega|t)^\frac12\left(|\Omega|^\frac14\left\|u_0\right\|_{L^2}\right)^2.
	\end{align*}
Therefore by interpolation we obtain
	\begin{align*}
		&\int_0^\frac t2\left\|T_\Omega(t-\tau)\mathbb P\operatorname{div}(u(\tau)\otimes u(\tau))\right\|_{\dot W^{m, \infty}}\, d\tau
		\lesssim |\Omega|^{-1}M^{m, \infty}_0(t)
		\left(|\Omega|^\frac14\left\|u_0\right\|_{L^2}\right)^\frac43
		\big\|U^{0, 2}_0\big\|_{L^\infty}^\frac23.
	\end{align*}
	
In order to deal with the second half of the Duhamel integral, we define the exponent
	$b\in(q, \infty)$
by the formula
	\begin{align*}
		\frac12=\frac1q-\frac s3+\frac1b.
	\end{align*}
The condition
	$b\in(q, \infty)$
can be verified by the inequalities
	$\frac1q-\frac s3<\frac1q<\frac12$
and
	$\frac2q-\frac s3\geq1-\frac1q>\frac12.$
Then we have
	\begin{align*}
		&\int_\frac t2^t\left\|T_\Omega(t-\tau)\mathbb P\operatorname{div}(u(\tau)\otimes u(\tau))\right\|_{\dot W^{m, \infty}}\, d\tau
		\lesssim \int_\frac t2^t(t-\tau)^{-\frac34}\left\|u(\tau)\otimes u(\tau)\right\|_{\dot H^{m+1}}\, d\tau\\
		&\lesssim \int_\frac t2^t(t-\tau)^{-\frac34}\left\|u(\tau)\right\|_{\dot W^{s, q}}\left\|u(\tau)\right\|_{\dot W^{m+1, b}}\, d\tau.
	\end{align*}
We may also take a real number
	$s_0\in (0, \frac3q)$
satisfying
	\begin{align*}
		\frac1b
		=\frac1q-\frac{s_0}3.
	\end{align*}
Then using the Sobolev embedding
	$\dot W^{s_0, q}(\mathbb R^3)
	\hookrightarrow L^b(\mathbb R^3), $
we have
	\begin{align*}
		&\int_\frac t2^t\left\|T_\Omega(t-\tau)\mathbb P\operatorname{div}(u(\tau)\otimes u(\tau))\right\|_{\dot W^{m, \infty}}\, d\tau
		\lesssim \int_\frac t2^t(t-\tau)^{-\frac34}\left\|u(\tau)\right\|_{\dot W^{s, q}}\left\|u(\tau)\right\|_{\dot W^{s_0+m+1, q}}\, d\tau\\
		&\lesssim |\Omega|^{-1}M^{s_0+m+1, q}_0(t)U^{s_0+m+1, q}_0(t)
		\int_\frac t2^t(t-\tau)^{-\frac34}\left\|u(\tau)\right\|_{\dot W^{s, q}}\, d\tau.
	\end{align*}
For the integral in the last term, we have
	\begin{align*}
		&\int_\frac t2^t(t-\tau)^{-\frac34}\left\|u(\tau)\right\|_{\dot W^{s, q}}\, d\tau
		\leq \left\|u\right\|_{L^\theta([0, \infty); \dot W^{s, q})}\left(
		\int_\frac t2^t(t-\tau)^{-\frac34\frac{\theta}{\theta-1}}\, d\tau
		\right)^{1-\frac1\theta}, 
	\end{align*}
and this is finite if and only if we have
	$\frac1\theta<\frac14.$
We show that the condition in fact holds in our setting. Indeed, since we have
	$\frac1\theta
	<\frac58-\frac3{2q}+\frac s4$, 
it suffices to show 
	$\frac58-\frac3{2q}+\frac s4\leq \frac14.$
This is in turn equivalent to
	$\frac14+\frac s6\leq\frac1q, $
and thus by \eqref{eq:5} it is sufficient to prove
	$\frac14+\frac s6
	\leq\frac13+\frac s9.$
This is equivalent to
	$s\leq\frac32, $
which is true. We therefore obtain
	\begin{align*}
		&\int_\frac t2^t(t-\tau)^{-\frac34}\left\|u(\tau)\right\|_{\dot W^{s, q}}\, d\tau
		\lesssim \left\|u\right\|_{L^\theta([0, \infty); \dot W^{s, q})}t^{\frac14-\frac1\theta}, 
	\end{align*}
and thus
	\begin{align*}
		&\int_\frac t2^t\left\|T_\Omega(t-\tau)\mathbb P\operatorname{div}(u(\tau)\otimes u(\tau))\right\|_{\dot W^{m, \infty}}\, d\tau\\
		&\lesssim |\Omega|^{-1}M^{s_0+m+1, q}_0(t)U^{s_0+m+1, q}_0(t)
		\left\|u\right\|_{L^\theta([0, \infty); \dot W^{s, q})}t^{\frac14-\frac1\theta}\\
		&\lesssim |\Omega|^{-1}U^{s_0+m+1, q}_0(t)t^{-\frac{s_0+m+1}2-\frac32(1-\frac1q)+\frac14-\frac1\theta}
		(1+|\Omega|t)^{-1+\frac2q}
		|\Omega|^{-\frac1\theta+\frac34-\frac3{2q}}\left\|u_0\right\|_{\dot H^s}\\
		&=|\Omega|^{-1}U^{s_0+m+1, q}_0(t)t^{-\frac m2-\frac32}(|\Omega|t)^{-\frac1\theta+\frac12-\frac3{2q}+\frac s2}
		(1+|\Omega|t)^{-1+\frac2q}
		|\Omega|^{-\frac12(s-\frac12)}\left\|u_0\right\|_{\dot H^s}\\
		&\leq|\Omega|^{-1}U^{s_0+m+1, q}_0(t)t^{-\frac m2-\frac32}
		|\Omega|^{-\frac12(s-\frac12)}\left\|u_0\right\|_{\dot H^s}.
	\end{align*}
We also have
	\begin{align}
		&\int_\frac t2^t\left\|T_\Omega(t-\tau)\mathbb P\operatorname{div}(u(\tau)\otimes u(\tau))\right\|_{\dot W^{m, \infty}}\, d\tau
		\lesssim \int_\frac t2^t(t-\tau)^{-\frac34}\left\|u(\tau)\right\|_{\dot W^{s, q}}\left\|u(\tau)\right\|_{\dot W^{s_0+m+1, q}}\, d\tau\notag\\
		&\lesssim |\Omega|^{-1}M^{s, q}_0(t)U^{s, q}_0(t)
		|\Omega|^{-1}M^{s_0+m+1, q}_0(t)U^{s_0+m+1, q}_0(t)
		t^{\frac14}\notag\\
		&= |\Omega|^{-2}U^{s, q}_0(t)U^{s_0+m+1, q}_0(t)
		t^{-\frac s2-\frac32(1-\frac1q)-\frac{s_0+m+1}2-\frac32(1-\frac1q)+\frac14}(1+|\Omega t|)^{-2+\frac4q}\notag\\
		&=|\Omega|^{-1}U^{s, q}_0(t)U^{s_0+m+1, q}_0(t)
		t^{-\frac m2-\frac32}(|\Omega|t)^{-1}(1+|\Omega t|)^{-2+\frac4q}\label{eq:d}\\
		&\leq|\Omega|^{-1}U^{s, q}_0(t)U^{s_0+m+1, q}_0(t)
		t^{-\frac m2-\frac32}(|\Omega|t)^{-1}.\notag
	\end{align}
By the above estimates, we obtain
	\begin{align*}
		&\int_\frac t2^t\left\|T_\Omega(t-\tau)\mathbb P\operatorname{div}(u(\tau)\otimes u(\tau))\right\|_{\dot W^{m, \infty}}\, d\tau\\
		&\lesssim |\Omega|^{-1}M^{m, \infty}_0(t)U^{s_0+m+1, q}_0(t)
		\left(U^{s, q}_0(t)+|\Omega|^{-\frac12(s-\frac12)}\left\|u_0\right\|_{\dot H^s}\right).
	\end{align*}
This completes the proof.
\qed

\begin{proposition}\label{prop:16}
	Let
		$u$
	be the global solution obtained in Theorem~\ref{thm:1} and assume that the initial data
		$u_0$
	satisfies
		$u_0\in L^1(\mathbb R^3)^3\cap \dot H^s(\mathbb R^3)^3, $
		$|x|u_0\in L^1(\mathbb R^3)^3, $
	and
		$|\Omega|^{-\frac12(s-\frac12)}\left\|u_0\right\|_{\dot H^s}
		\leq C_\star.$
	Then for any
		$m\geq 0$
	there exists a constant
		$C>0$
	depending only on
		$m, $
		$s, $
		$q, $
		$\theta, $
		$|\Omega|\left\|u_0\right\|_{L^1}, $
		$|\Omega|^\frac32\left\||x|u_0\right\|_{L^1}, $
		$|\Omega|^\frac14\left\|u_0\right\|_{L^2}, $
	and
		$|\Omega|^{-\frac12(s-\frac12)}\left\|u_0\right\|_{\dot H^s}, $
	satisfying
		$U^{m, \infty}_1(t)\leq C$
	for all
		$t>0.$
\end{proposition}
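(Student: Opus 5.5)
We follow the scheme of Proposition~\ref{prop:15}, now with target weight $M^{m,\infty}_1(t)=t^{-\frac12}M^{m,\infty}_0(t)$ and an extra factor $|\Omega|^{\frac12}$ in $U^{m,\infty}_1$. We split $u(t)$ into the linear part, the Duhamel integral over $(0,\frac t2)$, and the Duhamel integral over $(\frac t2,t)$, and we estimate the $\dot W^{m,\infty}$-norm of each piece.

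\textbf{Linear part.} Proposition~\ref{prop:2.4} with $p=\infty$ gives $\|T_\Omega(t)u_0\|_{\dot W^{m,\infty}}\lesssim M^{m,\infty}_1(t)\||x|u_0\|_{L^1}$. After multiplying by $|\Omega|^{\frac32}$, this produces the invariant quantity $|\Omega|^{\frac32}\||x|u_0\|_{L^1}$.

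\textbf{First half of the Duhamel integral.} As in \eqref{eq:c}, Proposition~\ref{prop:2.2} with $L^1$ data gives
\[
\int_0^{\frac t2}\bigl\|T_\Omega(t-\tau)\mathbb P\operatorname{div}(u\otimes u)\bigr\|_{\dot W^{m,\infty}}\,d\tau \lesssim M^{m,\infty}_0(t)\,t^{-\frac12}\int_0^\infty\|u(\tau)\|_{L^2}^2\,d\tau .
\]
Splitting the time integral at $\lambda$ and optimizing, as in the $L^2$ weighted proposition, bounds $\int_0^\infty\|u\|_{L^2}^2$ by $\|u_0\|_{L^2}^{\frac23}|\Omega|^{-\frac43}\|U^{0,2}_0\|_{L^\infty}^{\frac43}$. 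This yields
\[
|\Omega|^{-\frac32}M^{m,\infty}_1(t)\bigl(|\Omega|^{\frac14}\|u_0\|_{L^2}\bigr)^{\frac23}\|U^{0,2}_0\|_{L^\infty}^{\frac43},
\]
and $\|U^{0,2}_0\|_{L^\infty}$ is controlled by Proposition~\ref{prop:13}.

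\textbf{Second half of the Duhamel integral.} We keep the exponents $b$ and $s_0$ from Proposition~\ref{prop:15}. Two bounds are already available there, each carrying $U^{s_0+m+1,q}_0$:
\begin{itemize}
\item the Strichartz-type bound $|\Omega|^{-1}t^{-\frac m2-\frac32}(|\Omega|t)^{-\frac1\theta+\frac12-\frac3{2q}+\frac s2}(1+|\Omega|t)^{-1+\frac2q}\,|\Omega|^{-\frac12(s-\frac12)}\|u_0\|_{\dot H^s}$;
\item the pure-decay bound \eqref{eq:d}, namely $|\Omega|^{-1}U^{s,q}_0U^{s_0+m+1,q}_0\,t^{-\frac m2-\frac32}(|\Omega|t)^{-1}(1+|\Omega|t)^{-2+\frac4q}$.
\end{itemize}
Expressed relative to $|\Omega|^{-\frac32}M^{m,\infty}_1(t)=|\Omega|^{-1}t^{-\frac m2-\frac32}(|\Omega|t)^{-\frac12}(1+|\Omega|t)^{-1}$, the pure-decay bound gains a factor $(|\Omega|t)^{-\frac12}(1+|\Omega|t)^{-1+\frac4q}$. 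Since $q>2$ this factor is bounded for $|\Omega|t\ge1$; for $|\Omega|t\le1$ it is not.

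For small $|\Omega|t$ we use the Strichartz-type bound instead. Relative to the target it carries the factor $(|\Omega|t)^{\frac12-\frac1\theta+\frac12-\frac3{2q}+\frac s2}$. We need this exponent to be nonnegative, i.e.
\[
\frac1\theta\le 1-\frac3{2q}+\frac s2 .
\]
This holds because $\frac1\theta<\frac14$, as shown in Proposition~\ref{prop:15}, while $1-\frac3{2q}+\frac s2>\frac14$ since $\frac1q<\frac12$. Taking the minimum of the two bounds (or interpolating between them, as before) controls the second half by
\[
|\Omega|^{-\frac32}M^{m,\infty}_1(t)\,U^{s_0+m+1,q}_0\bigl(U^{s,q}_0+|\Omega|^{-\frac12(s-\frac12)}\|u_0\|_{\dot H^s}\bigr).
\]
The factors $U^{\cdot,q}_0$ are bounded by the $L^q$-type propositions. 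Collecting the three pieces gives $U^{m,\infty}_1\le C$.

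\textbf{Main obstacle.} The delicate step is the exponent bookkeeping in the second half when $|\Omega|t$ is small. We must check that the available factor $(|\Omega|t)^{-\frac1\theta+\frac12-\frac3{2q}+\frac s2}$ indeed dominates the required $(|\Omega|t)^{-\frac12}$, which is exactly the inequality $\frac1\theta\le 1-\frac3{2q}+\frac s2$ above. If this failed for some admissible parameters, we would fall back on a different Hölder split, for instance placing $u$ in $L^\infty$ via $U^{m,\infty}_1$ itself and absorbing that term using the smallness of $|\Omega|^{-\frac12(s-\frac12)}\|u_0\|_{\dot H^s}$.
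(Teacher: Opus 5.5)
You treat the linear term and the integral over $(0,\frac t2)$ exactly as the paper does, and that part is fine. The estimate of the integral over $(\frac t2,t)$, however, fails for \emph{large} $|\Omega|t$, not small.

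\textbf{Why the second half fails.} Relative to the target, you obtain the factor $(|\Omega|t)^{-\frac12}(1+|\Omega|t)^{-1+\frac4q}$ from \eqref{eq:d}. This behaves like $(|\Omega|t)^{\frac4q-\frac32}$ as $|\Omega|t\to\infty$, so it is bounded only when $q\ge\frac83$; $q>2$ is not enough. But \eqref{eq:5} gives $\frac1q\ge\frac13+\frac s9>\frac7{18}$, i.e.\ $q<\frac{18}7<\frac83$. Hence $\frac4q-\frac32>\frac1{18}$, and the factor diverges for every admissible $q$.

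The Strichartz-type bound does not rescue this regime. Relative to the target it carries $(|\Omega|t)^{1-\frac1\theta-\frac3{2q}+\frac s2}(1+|\Omega|t)^{\frac2q}$, which grows like $(|\Omega|t)^{1-\frac1\theta+\frac1{2q}+\frac s2}$. Both bounds blow up as $|\Omega|t\to\infty$, so neither their minimum nor any interpolation between them yields $U^{m,\infty}_1\le C$.

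The obstruction is structural. Both bounds from Proposition~\ref{prop:15} use only $L^q$-based information, and each $L^q$ factor carries just the weak rotational gain $(1+|\Omega|t)^{-1+\frac2q}$. Their product $(1+|\Omega|t)^{-2+\frac4q}$ cannot supply both the extra $t^{-\frac12}$ and the full $(1+|\Omega|t)^{-1}$ that $M^{m,\infty}_1$ demands.

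\textbf{The missing idea.} Use a H\"older split in which one factor is measured in $L^\infty$, where Proposition~\ref{prop:15} already gives the full dispersive decay:
\[
\|u\otimes u\|_{\dot H^{m+1}}\lesssim\|u\|_{L^2}\|u\|_{\dot W^{m+1,\infty}}.
\]
Bound $\|u(\tau)\|_{\dot W^{m+1,\infty}}$ through $U^{m+1,\infty}_0$. Bound $\|u(\tau)\|_{L^2}$ either through $U^{0,2}_0$ or by $\|u_0\|_{L^2}$ via the energy equality. This gives the two bounds
\[
|\Omega|^{-\frac32}M^{m,\infty}_1(t)(|\Omega|t)^{-\frac12}U^{0,2}_0(t)U^{m+1,\infty}_0(t)
\quad\text{and}\quad
|\Omega|^{-\frac32}M^{m,\infty}_1(t)(|\Omega|t)^{\frac14}|\Omega|^{\frac14}\|u_0\|_{L^2}U^{m+1,\infty}_0(t).
\]
Interpolating these with weights $\frac13$ and $\frac23$ cancels the powers of $|\Omega|t$ exactly.

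Here the factor $M^{0,2}_0$ carries no rotational weight, and the $L^\infty$ factor supplies the whole $(1+|\Omega|t)^{-1}$, so nothing is lost at large times. Since $U^{m+1,\infty}_0$ and $U^{0,2}_0$ are already bounded by Propositions~\ref{prop:15} and~\ref{prop:13}, no absorption argument using the smallness of $|\Omega|^{-\frac12(s-\frac12)}\|u_0\|_{\dot H^s}$ is needed. Your suggested fallback is aimed at the wrong regime. It also invokes $U^{m,\infty}_1$ where the $L^\infty$ factor actually needs $m+1$ derivatives, controlled by $U^{m+1,\infty}_0$.
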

\proof
For the linear term, we routinely have
	\begin{align*}
		\left\|T_\Omega(t)u_0\right\|_{\dot W^{m, \infty}}
		\lesssim M^{m, \infty}_1(t)\left\||x|u_0\right\|_{L^1}.
	\end{align*}
For the first half of the Duhamel integral, by the argument in Proposition~\ref{prop:15}, we have
	\begin{align*}
		&\int_0^\frac t2\left\|T_\Omega(t-\tau)\mathbb P\operatorname{div}(u(\tau)\otimes u(\tau))\right\|_{\dot W^{m, \infty}}\, d\tau\\
		&\lesssim |\Omega|^{-1}M^{m, \infty}_0(t)(|\Omega|t)^{-\frac12}
		\left(|\Omega|^\frac14\left\|u_0\right\|_{L^2}\right)^\frac23
		\big\|U^{0, 2}_0\big\|_{L^\infty}^\frac43\\
		&=|\Omega|^{-\frac32}M^{m, \infty}_1(t)
		\left(|\Omega|^\frac14\left\|u_0\right\|_{L^2}\right)^\frac23
		\big\|U^{0, 2}_0\big\|_{L^\infty}^\frac43.
	\end{align*}

For the second half of the Duhamel term, we have
	\begin{align*}
		&\int_\frac t2^t\left\|T_\Omega(t-\tau)\mathbb P\operatorname{div}(u(\tau)\otimes u(\tau))\right\|_{\dot W^{m, \infty}}\, d\tau\\
		&\lesssim \int_\frac t2^t(t-\tau)^{-\frac34}\left\|u(\tau)\otimes u(\tau)\right\|_{\dot H^{m+1}}\, d\tau
		\lesssim \int_\frac t2^t(t-\tau)^{-\frac34}\left\|u(\tau)\right\|_{L^2} \left\|u(\tau)\right\|_{\dot W^{m+1, \infty}}\, d\tau\\
		&\lesssim |\Omega|^{-1}M^{0, 2}_0(t)U^{0, 2}_0(t)
		|\Omega|^{-1}M^{m+1, \infty}_0(t)U^{m+1, \infty}_0(t)
		t^\frac14\\
		&\leq |\Omega|^{-\frac32}M^{m, \infty}_1(t)
		(|\Omega|t)^{-\frac12}
		U^{0, 2}_0(t)
		U^{m+1, \infty}_0(t).
	\end{align*}
We also have
	\begin{align*}
		&\int_\frac t2^t\left\|T_\Omega(t-\tau)\mathbb P\operatorname{div}(u(\tau)\otimes u(\tau))\right\|_{\dot W^{m, \infty}}\, d\tau\\
		&\lesssim \int_\frac t2^t(t-\tau)^{-\frac34}\left\|u(\tau)\right\|_{L^2} \left\|u(\tau)\right\|_{\dot W^{m+1, \infty}}\, d\tau\\
		&\lesssim |\Omega|^{-\frac32}M^{m, \infty}_1(t)(|\Omega|t)^\frac14\left(|\Omega|^\frac14\left\|u_0\right\|_{L^2}\right)U^{m+1, \infty}_0(t).
	\end{align*}
By interpolation, we obtain
	\begin{align*}
		&\int_\frac t2^t\left\|T_\Omega(t-\tau)\mathbb P\operatorname{div}(u(\tau)\otimes u(\tau))\right\|_{\dot W^{m, \infty}}\, d\tau\\
		&\lesssim |\Omega|^{-\frac32}M^{m, \infty}_1(t)\left(|\Omega|^\frac14\left\|u_0\right\|_{L^2}\right)^\frac23
		U^{0, 2}_0(t)^\frac13
		U^{m+1, \infty}_0(t).
	\end{align*}
Combining the above estimates concludes the proof.
\qed

We finally prove Theorem~\ref{thm:3}.
\proof[Proof of Theorem~\ref{thm:3}]
The decay part follows by interpolating the estimates obtained in Section~\ref{sect:4}. For the asymptotics part, note that decay rates of the Duhamel term \eqref{eq:a} and \eqref{eq:b} in Proposition~\ref{prop:13}, and \eqref{eq:c} and \eqref{eq:d} in Proposition~\ref{prop:15}, are always strictly faster than the desired ones. Combining this with the estimate in Proposition~\ref{prop:2.4'}, we obtain the conclusion.
\qed

\subsection*{Acknowledgement}
The author would like to express his sincere gratitude to Professor Ryo Takada for his valuable comments and continuous encouragement.
This research was supported by FoPM, WINGS Program, the University of Tokyo.

\vspace{5pt}
\noindent{\bf{Data availability statement}} No data was used for the research described in the article.

\vspace{5pt}
\noindent{\bf{Declarations}}

\vspace{5pt}
\noindent{\bf Conflict of interest}
The author declares that he has no conflict of interest.

\bibliographystyle{alpha} 
\bibliography{refs}          

\end{document}